\documentclass[12pt,a4,final]{article}
\usepackage[utf8]{inputenc}

\pdfoutput=1

\newcommand\Tstrut{\rule{0pt}{2.9ex}}       
\newcommand\Bstrut{\rule[-1.3ex]{0pt}{0pt}}

\usepackage[left=3cm,right=3cm,top=3cm,bottom=4cm]{geometry}
\usepackage[normalem]{ulem}
\usepackage{float}
\usepackage{mathrsfs}
\usepackage{amsmath,amssymb,amsthm}
\usepackage{bm}
\usepackage{algorithm}
\usepackage{algpseudocode}
\usepackage{hyperref}
\usepackage[misc]{ifsym}
\usepackage{color}
\usepackage{graphicx}
\usepackage{subcaption}

\usepackage{amsfonts}
\usepackage{mathtools}

\usepackage{multirow}

\newcommand{\bbR}{\mathbb R}

\providecommand{\Q}{\ensuremath{\mathbb{Q}}}   
\providecommand{\Pb}{\ensuremath{\mathbb{P}}}

\DeclareMathOperator{\sgn}{sgn}
\DeclarePairedDelimiter\parens{\lparen}{\rparen}
\providecommand{\sgnp}[1]{\sgn\parens{#1}}

\providecommand{\rset}{\mathbb{R}}

\newtheorem{theorem}{Theorem}[section]
\newtheorem{lem}{Lemma}[section]

\newtheorem{prop}{Proposition}[section]
\newtheorem{cor}{Corollary}[section]

\newcounter{hypA}
\newenvironment{hypA}{\refstepcounter{hypA}\begin{itemize}
  \item[({\bf A\arabic{hypA}})]}{\end{itemize}}

\begin{document}

\title{A Wasserstein Coupled Particle Filter for Multilevel 
  Estimation\thanks{
        This work is supported by the KAUST Office of  
    Sponsored Research (OSR) under Award No. URF/1/2584-01-01 in 
    the KAUST Competitive Research Grants Program-Round 4 (CRG2015) 
    and the Alexander von Humboldt Foundation. 
    \newline
    ~
    \newline
    \Letter \hspace{1mm} Marco~Ballesio 
    $\mbox{\hspace{1.8mm}}$ \texttt{marco.ballesio@kaust.edu.sa},
    Tel.: +966-54-7974461
    \newline
    ~
        \newline
    $^a$ 
    Computer, Electrical and Mathematical Sciences and Engineering, 
    \newline
    $\mbox{\hspace{1.8mm}}$ 4700 King Abdullah University of Science and Technology (KAUST), 
    \newline
    $\mbox{\hspace{1.8mm}}$ Thuwal 23955-6900, Kingdom of Saudi Arabia.    
    \newline
    $^b$  
    KAUST SRI Center for Uncertainty Quantification in Computational
    Science and Engineering
        \newline
    $^c$ 
    Alexander von Humboldt Professor in Mathematics for Uncertainty
    Quantification, 
    RWTH Aachen University, Germany.
  }} 

\author{Marco~Ballesio$^{a,b}$~\href{https://orcid.org/0000-0001-5690-6926}{\small{ORCID}}
  \and
  Ajay~Jasra$^{a}$~\href{https://orcid.org/0000-0003-4808-9131}{\small{ORCID}}
  \and Erik~von~Schwerin$^{a,b}$~\href{https://orcid.org/0000-0002-2964-7225}{\small{ORCID}}
  \and Ra\'{u}l~Tempone$^{a,b,c}$~\href{https://orcid.org/0000-0003-1967-4446}{\small{ORCID}}
}

\markboth{Ballesio et al.}{A Wasserstein Coupled Particle Filter for Multilevel Estimation}
\maketitle
\begin{abstract}
  In this paper, we consider the filtering problem for partially observed
  diffusions, which are regularly observed at discrete times. We are
  concerned with the case when one must resort to time-discretization of
  the diffusion process if the transition density is not available in an
  appropriate form. In such cases, one must resort to advanced
  numerical algorithms such as particle filters to consistently estimate
  the filter. It is also well known that the particle filter can be
  enhanced by considering hierarchies of discretizations and the
  multilevel Monte Carlo (MLMC) method, in the sense of reducing
  the computational effort to achieve a given mean square error (MSE). A
  variety of multilevel particle filters (MLPF) have been suggested in
  the literature, e.g.,~in Jasra et al.,~\emph{SIAM J,
      Numer. Anal.}, {\bf 55}, 3068--3096. 
  Here we introduce a new
  alternative that involves a resampling step based on the optimal
  Wasserstein coupling. We prove a central limit theorem (CLT) for the
  new method. On considering the asymptotic variance, we establish that
  in some scenarios, there is a reduction, relative to
    the approach in the aforementioned paper by Jasra et al., in
  computational effort to achieve a given MSE. These findings are
  confirmed in numerical examples. We also consider filtering diffusions
  with unstable dynamics; we empirically show that in such cases
  a change of measure technique seems to be required to maintain our findings. \\
  \textbf{Key Words}: Filtering, Diffusions, Multilevel Monte Carlo, Particle Filters.
\end{abstract}

\section{Introduction}

Hidden Markov models (HMMs) form a wide class of dynamical models
that are appropriate for modeling a variety of scenarios in
applications such as finance, economics, and engineering; see for
instance~\cite{cappe}. In this article, we consider the case where data
are observed at regular time intervals and the hidden Markov chain (or
signal) evolves according to a diffusion process. Our main objective
is to consider the filtering problem, which is the estimation of expectations of functions of the
signal given all the data observed up to the current time point,
recursively in time.

In most cases of practical interest, the filtering problem requires
numerical, Monte Carlo-based, approximation techniques, due to the
intractable integrals associated with the filtering distribution. In
particular, when the dimension of the hidden chain is moderate
(approximately 10 or less), an often-used and consistent approximation
uses the particle filter (PF); see, for instance,~\cite{fk}. The
PF generates a collection of $N$ samples (particles) in
parallel that evolve sequentially in time via `sampling' and then
`resampling' steps. The sampling step moves the samples according to
certain dynamics (for instance, the hidden chain) and then corrects for
the fact that the law of the samples is not the filter, by using
importance sampling. To ensure that importance sampling performs well
w.r.t.~the time parameter, the resampling step is used, which samples
with replacement from the current set of particles with probabilities
proportional to their weight. (The weights are then reset to one). The
samples are used to sequentially approximate expectations w.r.t.~the
filter. This method can provide estimates of expectations w.r.t.~the
filter with an algorithm that is of linear cost in time and with error
that (often) does not depend on the time parameter.

In the case of our HMM of interest, we assume explicitly that the
diffusion process has a transition density that is intractable, i.e., 
it is not known analytically, nor available up-to a non-negative
unbiased estimator (e.g.~\cite{fearnhead}). In such scenarios, one
often focuses on the case where filtering is performed when the
hidden dynamics have been time-discretized, for instance using the
Euler method. Then, to approximate expectations associated with the
filter, the PF can be run, when considering the `most-precise'
time discretization. It is well known, however, that this can be
improved, in the sense that there is a reduction in computational
effort to achieve a given mean square error (MSE) (for estimating expecatations w.r.t.~the
filter), by using the multilevel Monte Carlo (MLMC) method~\cite{giles,heinrich}. 

The MLMC method considers a collapsing sum of expectations associated
with a hierarchy of filters, at a given time, associated with increasingly
finer time discretizations of the diffusion process. Given access to
\emph{exact} samples of a `good' \emph{coupling} of pairs of filters at
consecutive time-discretizations, the method can achieve an
improvement as noted above. Essentially, this approach requires
couplings such that the variance of the difference of the
position of the filters are close in the sense of the
time discretization and then uses fewer samples as time
discretizations become increasingly finer (and hence more
expensive). The main issue is that exact sampling of couplings of
filters is challenging to achieve (hence the use of PFs), and this has
lead to a substantial number of contributions of MLPFs;
\cite{gregory,mlpf,mlpf_nc,sen}. 

Some of the first works on MLPFs include~\cite{gregory}
and~\cite{mlpf}. These two methods are similar in the sampling stage
of PFs but differ on the resampling stage of PFs. The approach
of~\cite{mlpf} generates pairs of particles that sequentially
approximate filters at a `fine' and `coarse' discretization. The
resampling step attempts to maximize the probability that the indices
of pairs of samples remain the same (the maximal coupling) while
ensuring the approximation is correct in the large sample
limit. \cite{mlpf} establishes the consistency of this method and
(mathematically) that there is a reduction of computational effort
relative to using the particle filter at the most accurate time
discretization, to obtain a target MSE of an estimate of filtering
expectations. We are not aware of such results in the case
of~\cite{gregory}, although it is established numerically in that
article also. The main issue with the work of~\cite{mlpf} is that
the rate of coupling, relative to the case where there is no data
(forward problem), is reduced by a factor of two. The objective of
this article is to consider a method that can provably, at least in
some scenarios, retain the rate of the forward problem. In addition,
we require that the cost of the method to be, in prinicple, linear in
the time parameter. We note that there are some solutions in this
direction. In~\cite{hous}, a procedure based on optimal transport is
derived, which \emph{experimentally} can achieve the aforementioned
objective; there is, however, no proof of this property and we
suspect due to the complexity of the numerical approximations
involved, that this is difficult to achieve. In~\cite{sen}, a mild
deviation of~\cite{mlpf} is considered, which in some contexts
(outside those considered here) appears to improve on~\cite{mlpf} in
numerical experiments, but again, we suspect that it is
  very challenging to verify this mathematically. There is also a method
in~\cite{deltapf} (see also~\cite{deltapf_jy}) that retains the
forward rate using a PF method. However, this latter method is not
useful for filtering, as the estimate is based on the path of
particles and subject to the notorious `path degeneracy' problem of
PFs (see~\cite{kantas}). 

In this article, we consider what appears to be a new resampling
mechanism in the context of MLPFs, focused on the case where the
hidden state is one dimensional. The procedure suggested in this 
article uses the optimal coupling of the resampled indices, 
in terms of squared Wasserstein distance with $L_2$ as the metric
(we call this the `Wasserstein coupling'). To motivate this, we develop an
original Feynman-Kac type interpretation of this method, which
establishes, in the large sample limit, that the resampling step
corresponds to sampling the optimal Wasserstein coupling of the
filters. We illustrate the benefit of this approach by proving a
central limit theorem for the estimation of the differences of
the predictors (and hence filters) associated with a fine and coarse
discretization. We show that the associated asymptotic variance, which is a 
proxy for the variance of a finite sample estimate (up-to a factor of
$\mathcal{O}(N^{-1})$), retains the forward rate under assumptions for a Euler
discretization, at least when the diffusion
coefficient is non-constant. The reason for this improvement, relative to~\cite{mlpf},
is that one is approximating an optimal type coupling, whereas (as noted in~\cite{jasra_yu}),
the limiting coupling of the method in~\cite{mlpf} does not have any optimality properties.
We verify this numerically in several
examples. We emphasize that the CLT is a non-trivial mathematical
result that requires several innovations, relative to the existing
CLTs for PFs (e.g.~\cite{fk}).  Some related work can be found in~\cite{jasra_yu}, which
proves CLTs for several MLPFs and also considers the approach in this article (although the CLT is not proved in~\cite{jasra_yu}).
Relative to that work, we show that the exact forward rate is maintained (as an upper-bound on the asymptotic variance) for certain
problems, whereas that is not exactly the case in~\cite{jasra_yu} (although we note that the assumptions are weaker in~\cite{jasra_yu} and the constants are uniform in
time, which is not the case in this study).

We also numerically consider the case of filtering diffusions with unstable dynamics, such as a double-well potential drift with constant diffusion
coefficient; the case of the forward problem has been considered in~\cite{MLMC_no_contractivity}, for example. In this scenario, simple PFs and MLPFs do not always
perform as well as they do in the 'standard case'. By adopting a change of measure on the time-discretized diffusion dynamics, as is used in~\cite{MLMC_no_contractivity}, our findings
can be extended to this problem as well.

This article is structured as follows. In Section~\ref{sec:model_algo}, we 
outline a general model as well as our new algorithm. 
In Section~\ref{sec:theory}, our CLT is stated and a consideration of the
asymptotic variance for an ML application is given. 
In Section~\ref{sec:numerics}, numerical experiments 
  confirming the theoretical analysis are presented.
  Proofs of the theoretical results and algorithm listings are included in 
  the appendices.

\section{Model and Algorithm}\label{sec:model_algo}

\subsection{Notations}

Let $(\mathsf{X},\mathcal{X})$ be a measurable space. 
For $\varphi:\mathsf{X}\rightarrow\mathbb{R}$, we write
$\mathcal{B}_b(\mathsf{X})$, $\mathcal{C}_b(\mathsf{X})$, and
as the collection of bounded measurable and 
continuous, bounded measurable functions, respectively. 
$\mathcal{C}^2(\mathsf{X})$ are the twice continuously differentiable real-valued functions on $\mathsf{X}$.
For $\varphi\in\mathcal{B}_b(\mathsf{X})$, we write the supremum norm
$\|\varphi\|=\sup_{x\in\mathsf{X}}|\varphi(x)|$. 
If $\mathsf{X}=\mathbb{R}$  and $\varphi:\mathsf{X}\rightarrow\mathbb{R}$,
$\varphi\in \textrm{Lip}(\mathsf{X})$ if 
there exists a finite constant $C$
for every $(x,y)\in\mathsf{X}\times\mathsf{X}$ $|\varphi(x)-\varphi(y)|\leq C|x-y|$.
$\mathscr{P}(\mathsf{X})$  denotes the collection of probability
measures on $(\mathsf{X},\mathcal{X})$. 
For a measure $\mu$ on $(\mathsf{X},\mathcal{X})$ and a
$\varphi\in\mathcal{B}_b(\mathsf{X})$, the notation
$\mu(\varphi)=\int_{\mathsf{X}}\varphi(x)\mu(dx)$ is used. 
For $(\mathsf{X}\times\mathsf{Y},\mathcal{X}\vee\mathcal{Y})$ a
measurable space and $\mu$ a non-negative measure on this space, 
we use the tensor-product of function notations for
$(\varphi,\psi)\in\mathcal{B}_b(\mathsf{X})\times\mathcal{B}_b(\mathsf{X})$, 
$\mu(\varphi\otimes\psi)=\int_{\mathsf{X}\times\mathsf{Y}}\varphi(x)\psi(y)\mu(d(x,y))$.
Let $K:\mathsf{X}\times\mathcal{X}\rightarrow[0,\infty)$ be a non-negative operator
and $\mu$ be a ($\sigma-$finite) measure; then we use the notations 
$
\mu K(dy) = \int_{\mathsf{X}}\mu(dx) K(x,dy)
$
and for $\varphi\in\mathcal{B}_b(\mathsf{X})$, 
$
K(\varphi)(x) = \int_{\mathsf{X}} \varphi(y) K(x,dy).
$
For $\mu,\nu\in\mathscr{P}(\mathsf{X})$, the total variation distance 
is written $\|\mu-\nu\|_{\textrm{tv}}=\sup_{A\in\mathcal{X}}|\mu(A)-\nu(A)|$.
For $A\in\mathcal{X}$, the indicator is written $\mathbb{I}_A(x)$.
$\mathcal{N}_q(\kappa,\Sigma)$
denotes an $q$-dimensional Gaussian distribution of mean $\kappa$ and
covariance $\Sigma$. We omit the subscript $q$ if $q=1$.

\subsection{Partially Observed Diffusion}\label{sec:diff}
We illustrate the general model to be considered in the context of partially observed
diffusions. This is simply to establish the model of principal interest in our numerical
work, but that the results to be derived extend to a more general framework. 
\cite{fk} provides a background for the material of the following subsections.

Consider the following diffusion process:
\begin{eqnarray}
dX_t & = & a(X_t)dt + b(X_t)dW_t
\label{eq:sde}
\end{eqnarray}
with $X_t\in\mathbb{R}^d$, $t\geq 0$, $x_0$ given and fixed and $\{W_t\}_{t\geq 0}$ a
standard Brownian motion of appropriate dimension. The following assumptions
will be made on the diffusion process: 
\begin{equation}
  \tag{D}\label{assumptions}
  \left.
  \parbox{\dimexpr\linewidth-4em}{
  {Denoting the $i^{th}-$ (resp.~$(i,j)^{th}$) element of  $a$ (resp.~$b$) as $a^i$ (resp.~$b^{i,j}$)}, the coefficients  $a^i, b^{i,j} \in \mathcal{C}^2(\mathbb{R}^d)$, for $i,j= 1,\ldots, d$. 
Also, $a$ and $b$ satisfy 
\begin{itemize}
\item[(i)] {\bf uniform ellipticity}: $b(x)b(x)^T$ is uniformly positive definite;
\item[(ii)] {\bf globally Lipschitz}:
there is a $C>0$ such that \newline
$|a(x)-a(y)|+|b(x)-b(y)| \leq C |x-y|$, 
for all $x,y \in \mathbb{R}^d$; 
\end{itemize}
}
\right\}
\end{equation}
(D) is assumed explicitly. 

We assume that data are regularly spaced, in discrete time, observations 
$y_1,\dots,y_{n}$, $y_k \in \bbR^m$. 
We assume that, conditional on $X_{k}$, 
$Y_k$ is independent of all other random variables with density $G(x_{k},y_k)$.
We will lag the time parameter by one
and omit the data from the notation and write $G_{k}(x_{k})$   instead of $G(x_{k+1},y_{k+1})$.
The joint probability density of the observations and the unobserved diffusion at the observation times 
is then
$$
\prod_{p=0}^n G_{p}(x_{p}) Q^\infty(x_{(p-1)},x_{p}), 
$$
where 
$Q^\infty(x_{(p-1)},x)$ 
is the transition density of the diffusion process as a function of $x$, i.e., the density of the solution
$X_1$ of~\eqref{eq:sde} at time $1$ given initial condition $X_0=x_{(p-1)}$.

In practice, in many applications, one must time discretize the
diffusion to use the model. We suppose
a Euler discretization with step size $h_l=2^{-l}$,
$l\geq 0$. Thus, in practice, we work with transition
  densities depending on the finite step size,
\begin{displaymath}
  \prod_{p=0}^n G_{p}(x_{p}) Q^{l}(x_{(p-1)},x_{p}).  
\end{displaymath}

\subsection{General Model}
Let $G_n\in\mathcal{B}_b(\mathsf{X})$, $G_n:\mathsf{X}\rightarrow\mathbb{R}_+$.
Let $\eta_0^{f},\eta_0^c\in\mathscr{P}(\mathsf{X})$ and
$\{M_n^f\}_{n\geq 1}$,   $\{M_n^c\}_{n\geq 1}$ be two sequences of
Markov kernels, i.e.~$M_n^f:\mathsf{X}\rightarrow\mathscr{P}(E)$, 
$M_n^c:\mathsf{X}\rightarrow\mathscr{P}(E)$. Define, for
$s\in\{f,c\}$, $\varphi\in\mathcal{B}_b(\mathsf{X})$, 
\begin{displaymath}
  \gamma_{n}^s(\varphi) = \int_{\mathsf{X}^{n+1}}\varphi(x_n)
  \Big(\prod_{p=0}^{n-1} G_p^s(x_p)\Big) \eta_0^s(dx_0)\prod_{p=1}^n
  M_p^s(x_{p-1},dx_p) 
\end{displaymath}
and
$$
\eta_n^s(\varphi) = \frac{\gamma_{n}^s(\varphi)}{\gamma_{n}^s(1)}.
$$
In the context of partially observed diffusions, $M_n^f$ corresponds to $Q^{l}$ ($l\geq 1$) and
$M_n^c$ corresponds to $Q^{l-1}$. 
The initial distribution, $\eta_0$, is simply the Euler kernel started
at some given $x_0$. 

The objective is to consider Monte Carlo type algorithms,
which, for $\varphi\in\mathcal{B}_b(\mathsf{X})$ 
 and recursively in $n$, will approximate quantities such as 
\begin{equation}\label{eq:pred}
  \eta_n^f(\varphi) - \eta_n^c(\varphi)
\end{equation}
or 
\begin{equation}\label{eq:filt}
  \frac{\eta_n^f(G_n\varphi)}{\eta_n^f(G_n)} -
  \frac{\eta_n^c(G_n\varphi)}{\eta_n^c(G_n)}. 
\end{equation}
For partially observed diffusions, this will correspond to the computation of
differences of expectations of predictors and filters. There are a
variety of reasons for why the differences are explicitly of
interest, which we explain below.

We would like to approximate couplings of $(\eta_n^f,\eta_n^c)$, say
$\check{\eta}_n\in\mathscr{P}(\mathsf{X}\times\mathsf{X})$, i.e.,~that
for any $A\in\mathcal{X}$ and every $n\geq 0$ 
\begin{displaymath}
  \check{\eta}_n(A\times\mathsf{X}) = \eta_n^f(A) \qquad
  \check{\eta}_n(\mathsf{X}\times A) = \eta_n^c(A) 
\end{displaymath}
and consider approximating 
\begin{displaymath}
  \check{\eta}_n(\varphi\otimes 1) - \check{\eta}_n(1\otimes \varphi)
\end{displaymath}
or 
\begin{displaymath}
  \frac{\check{\eta}_n((G_n\varphi)\otimes 1)}
  {\check{\eta}_n(G_n\otimes 1)} 
  - \frac{\check{\eta}_n(1\otimes(G_n\varphi))}
  {\check{\eta}_n(1\otimes G_n)}.
\end{displaymath}
Throughout the article, we assume that there exists (there is always at least one, the independent coupling)
$\check{\eta}_0\in\mathscr{P}(\mathsf{X}\times\mathsf{X})$ such that 
for any $A\in\mathcal{X}$
\begin{displaymath}
  \check{\eta}_0(A\times \mathsf{X}) = 
  \eta_0^f(A) \qquad \check{\eta}_0(\mathsf{X}\times A) = \eta_0^c(A)
\end{displaymath}
and moreover for any $n\geq 1$, there exists Markov kernels
$\{\check{M}_n\}$,
$\check{M}_n:\mathsf{X}\times\mathsf{X}\rightarrow\mathscr{P}(\mathsf{X}\times\mathsf{X})$ 
such that for any $A\in\mathcal{X}$, $(x,x')\in\mathsf{X}\times\mathsf{X}$:
\begin{displaymath}
  \check{M}_n(A\times \mathsf{X})(x,x') = 
  M_n^f(A)(x) \qquad \check{M}_n(\mathsf{X}\times A)(x,x') = M_n^c(A)(x').
\end{displaymath}
In the case of partially observed diffusions, a natural and non-trivial coupling of $M_n^f$ and $M_n^c$ (and
hence of $\eta_0$)  exists, denoted $\check{Q}^l$; see e.g.~\cite{mlpf}. 

\subsection{Illustration for Partially Observed Diffusions}\label{sec:pod_ex}
For partially observed diffusions, we consider why  quantities such as $\eta_n^f(\varphi) - \eta_n^c(\varphi)$ are of interest.
 In particular, we explore their role in multilevel schemes
  that can significantly reduce the cost of particle filters where
  the HMM is numerically approximated by a time-stepping scheme. 
It is well-known in the literature~\cite{giles,mlpf} that if exact sampling from $\eta_n^L$ was possible for any $L
\geq 0$,
then the use of the multilevel identity, for $\varphi\in\mathcal{B}_b(\mathsf{X})$,
\begin{equation}\label{eq:ml_id}
\eta_n^L(\varphi) = \eta_n^0(\varphi) + \sum_{l=1}^L[\eta_n^l-\eta_n^{l-1}](\varphi)
\end{equation}
can (in some cases) greatly reduce the computational cost to achieve a given MSE, compared to Monte Carlo estimation using samples from $\eta_n^L$.
The main key to this cost reduction is sampling from 'good' couplings of $(\eta_n^l,\eta_{n}^{l-1})$, as we explain below in a similar way to \cite[Section 2.3.1.]{jasra_yu}. 

We will assume that $\mathsf{X}=\mathbb{R}$ and that $\varphi\in\textrm{Lip}(\mathsf{X})\cap\mathcal{B}_b(\mathsf{X})$.
Suppose that one can sample from $\eta_n^L$ exactly and the cost to do this is $\mathcal{O}(h_L^{-1})$; we will, throughout this exposition, ignore the cost in terms of the time
parameter $n$.
If one obtains exact samples $x_n^{1,L},\dots,x_n^{N,L}$ from $\eta_n^L$, then an unbiased and consistent estimator of $\eta_n^L(\varphi)$ is $\tfrac{1}{N}\sum_{i=1}^N\varphi(x_n^{i,L})$ and the mean square error (relative to the expectation of $\varphi$ w.r.t.~the predictor with no discretization error) is at most, under assumptions,
$$
C\Big(\frac{1}{N} + h_L^2\Big)
$$
for some finite constant $C$ 
that does not depend upon $N$ nor $L$. Taking an arbitrary $\epsilon >0$, one can make the MSE $\mathcal{O}(\epsilon^2)$
by choosing $L=\mathcal{O}(|\log(\epsilon)|)$ and $N=\mathcal{O}(\epsilon^{-2})$; 
then the MSE is $\mathcal{O}(\epsilon^2)$ and the cost to achieve
this is $\mathcal{O}(\epsilon^{-3})$. 

Now suppose that one can obtain exact samples from some coupling $\check{\eta}_n$ of $(\eta_n^l,\eta_{n}^{l-1})$ (of cost $\mathcal{O}(h_l^{-1})$)
and one seeks to approximate the R.H.S.~of \eqref{eq:ml_id}. The first term of the R.H.S.~of \eqref{eq:ml_id} can be approximated by using the same
procedure as for $\eta_n^L$, with $L=0$ (using say $N_0$ samples). For the summands in \eqref{eq:ml_id}, one can consider an approach that is independent of each other and of that for $\eta_n^0(\varphi)$. One generates $N_l$ samples independently from $\check{\eta}_n$, that is, $u_n^{1},\dots,u_n^{N_l}$, where $u_n=(x_n^l,x_n^{l-1})\in\mathsf{X}\times\mathsf{X}$. An unbiased and consistent estimate of $[\eta_n^l-\eta_n^{l-1}](\varphi)$ is then $\tfrac{1}{N_l}\sum_{i=1}^{N_l} [\varphi(x_n^{i,l})-\varphi(x_n^{i,l-1})]$.
Now, the MSE of the approach detailed (as for the Monte Carlo estimator above) is at most
$$
C\Big(\frac{1}{N_0} + \sum_{l=1}^L\frac{\check{\eta}_n([\varphi\otimes1 - 1\otimes \varphi]^2)}{N_l} + h_L^2\Big)
$$
where $C$ does not depend upon $N_0,\dots,N_L$ or $L$.
Now, to make the terms $\check{\eta}_n([\varphi\otimes1 - 1\otimes \varphi]^2)$ `small', one has
$$
\check{\eta}_n([\varphi\otimes1 - 1\otimes \varphi]^2) \leq C\int_{\mathsf{X}\times\mathsf{X}}(x_n^l-x_n^{l-1})^2\check{\eta}_n\Big((d(x_n^l,x_n^{l-1})\Big)
$$
where $C$ is the Lipschitz constant of $\varphi$.
The coupling that minimizes the R.H.S.~of the above displayed equation is exactly the Wasserstein coupling. If, for instance, this meant
$$
\int_{\mathsf{X}\times\mathsf{X}}(x_n^l-x_n^{l-1})^2\check{\eta}_n\Big((d(x_n^l,x_n^{l-1})\Big) \leq C h_l
$$
where $C$ does not depend upon $l$ then the MSE of our approach is upper-bounded by
$$
C\Big(\sum_{l=0}^L\frac{h_l}{N_l} + h_L^2\Big).
$$
\cite{giles}, for example, shows that setting $L=\mathcal{O}(|\log(\epsilon)|)$ and $N_l=\mathcal{O}(\epsilon^{-2}h_l L)$ yields a MSE of $\mathcal{O}(\epsilon^2)$ for a cost of $\mathcal{O}(\epsilon^{-2}\log(\epsilon)^2)$ - reducing the cost over i.i.d.~sampling from $\eta_n^L$.
The issue is that general exact sampling from $\eta_n^L$, or couplings of $(\eta_n^l,\eta_{n}^{l-1})$, is not currently possible; 
thus we will focus on PFs and MLPFs below.

\subsection{Algorithm}\label{sec:algo}
We restrict our attention to the case that $\mathsf{X}=\mathbb{R}$. 
We explicitly assume that the cumulative distribution
function (CDF) associated with the
probability, and its generalized inverse, for $s\in\{f,c\}$, $n\geq 0$  
$$
\overline{\eta}_n^s(\varphi) = \frac{\eta_n^s(G_n\varphi)}{\eta_n^s(G_n)}
$$
exist and are continuous functions. We denote the
CDF (resp.~generalized inverse) of
$\overline{\eta}_n^s$ as 
$F_{\overline{\eta}_n^s}$ (resp.~$F_{\overline{\eta}_n^s}^{-1}$ ). 
In general, we write probability measures on $\mathsf{X}$ for which the CDF and generalized inverse are well-defined as $\mathscr{P}_F(\mathsf{X})$ with the 
associated CDF $F_{\mu}$.
 Let $n\geq 1$, $\varphi\in\mathcal{B}_b(\mathsf{X}\times\mathsf{X})$ and $\mu,\nu\in\mathscr{P}_F(\mathsf{X})$ and define the probability measure:
$$
\check{\Phi}_n^W(\mu,\nu)(\varphi)  = \int_{\mathsf{X}\times\mathsf{X}}  \Big(\int_{0}^1 \delta_{\{F_{\mu}^{-1}(w),F_{\nu}^{-1}(w)\}}(du) dw \Big) \check{M}_n(\varphi)(u).
$$
We remark that the probability measure, assuming that it is well-defined,
$$
\int_{0}^1 \delta_{\{F_{\overline{\eta}_{p-1}^f}^{-1}(w),F_{\overline{\eta}_{p-1}^c}^{-1}(w)\}}(du) dw
$$
is the optimal $L_2-$Wasserstein coupling of $(\overline{\eta}_{p-1}^f,\overline{\eta}_{p-1}^c)$.
Consider the joint probability measure on $(\mathsf{X}\times\mathsf{X})^{n+1}$
$$
\mathbb{P}(d(u_0,\dots,u_n)) = \check{\eta}_0^W(du_0) \prod_{p=1}^n \check{\Phi}_p^W(\overline{\eta}_{p-1}^f,\overline{\eta}_{p-1}^c)(du_p)
$$
where  $u_p=(x_p^f,x_p^c)\in\mathsf{X}\times\mathsf{X}$. Note that here, $\check{\eta}_0^W$ is just a given coupling of $\eta_0^f$ and $\eta_0^c$. 
For instance, in the diffusion case of Section~\ref{sec:diff}, it is the coupling of pairs of Euler discretizations. For $n\geq1$, $\varphi\in\mathcal{B}_b(\mathsf{X}\times\mathsf{X})$:
$$
\check{\eta}_n^W(\varphi) = \check{\Phi}_n^W(\overline{\eta}_n^f,\overline{\eta}_n^c)(\varphi).
$$
We can easily check that for any $\varphi\in\mathcal{B}_b(\mathsf{X})$
$$
\check{\eta}_n^W(\varphi\otimes 1) = \eta_n^f(\varphi) \quad\textrm{and}\quad \check{\eta}_n^W(1\otimes\varphi) = \eta_n^c(\varphi).
$$

The algorithm (an MLPF) used here is:
$$
\mathbb{P}(d(u_0^{1:N},\dots,u_n^{1:N})) = \Big(\prod_{i=1}^N \check{\eta}_0(du_0^i)\Big)\Big(\prod_{p=1}^n\prod_{i=1}^N\check{\Phi}_p^W(\overline{\eta}_{p-1}^{N,f},\overline{\eta}_{p-1}^{N,c})(du_p^i)\Big)
$$
where for $p\geq 1$, $s\in\{f,c\}$
$$
\overline{\eta}_{p-1}^{N,s}(dx) = \sum_{i=1}^N \frac{G_{p-1}(x_{p-1}^{i,s})}{\sum_{j=1}^N G_{p-1}(x_{p-1}^{j,s})}\delta_{x_{p-1}^{i,s}}(dx).
$$
For $p\geq 1$, $s\in\{f,c\}$
$$
\eta_{p-1}^{N,s}(dx) = \frac{1}{N}\sum_{i=1}^N \delta_{x_{p-1}^{i,s}}(dx).
$$
Set
$$
\check{\eta}_{p}^{N,W}(du) = \frac{1}{N}\sum_{i=1}^N \delta_{u_{p}^{i}}(du).
$$

\subsubsection{Partially Observed Diffusions}
To illustrate the algorithm in a slightly more practical format, we describe the procedure in terms of the model in Section~\ref{sec:diff}.
In Algorithm~\ref{alg:phi_samp}, we first explain how we can sample from the operator $\check{\Phi}_p^W(\overline{\eta}_{p-1}^{N,f},\overline{\eta}_{p-1}^{N,c})(\cdot)$.
Several remarks are required. Firstly, the sorting step in 1.~need only be done once at each time step, not for each sample; in the worst case, this is of cost $\mathcal{O}(N\log(N))$. Secondly, in 2.~one
can maintain the order of the samples, by using the method in, for instance, \cite[pp.~96]{ripley}. The MLPF is then summarized in Algorithm \ref{alg:mlpf_samp}. 

To estimate
$[\eta_p^l-\eta_p^{l-1}](\varphi)$ (differences of the predictor), one has the estimator
$$
\frac{1}{N}\sum_{i=1}^N[\varphi(x_p^{i,l})-\varphi(x_p^{i,l-1})]
$$
which can be written concisely as $[\eta_{p}^{N,l}-\eta_{p}^{N,l-1}](\varphi)$ or equivalently as $\check{\eta}_{p}^{N,W}(\varphi\otimes 1 - 1\otimes \varphi)$.
To estimate
$$
\frac{\eta_p^l(G_p\varphi)}
  {\eta_p^l(G_p)} 
  - \frac{\eta_p^{l-1}(G_p\varphi)}
  {\eta_p^{l-1}(G_p)}
$$
that is, the differences in the filters, one can use
$$
\frac{\sum_{i=1}^N G_p(x_p^{i,l})\varphi(x_p^{i,l})}
  {\sum_{i=1}^N G_p(x_p^{i,l})} 
  - \frac{\sum_{i=1}^N G_p(x_p^{i,l-1})\varphi(x_p^{i,l-1})}
  {\sum_{i=1}^N G_p(x_p^{i,l-1})}
$$
which can be written $\eta_p^{N,l}(G_p\varphi)/\eta_p^{N,l}(G_p)-\eta_p^{N,l-1}(G_p\varphi)/\eta_p^{N,l-1}(G_p)$ or equivalently as
$\check{\eta}_p^{N,W}((G_p\varphi)\otimes 1)/\check{\eta}_p^{N,W}(G_p\otimes 1) - 
\check{\eta}_p^{N,W}(1\otimes (G_p\varphi))/\check{\eta}_p^{N,W}(1\otimes G_p)$.
\begin{algorithm}
\begin{enumerate}
\item{Sort $(x_{p-1}^{1,s},\dots,x_{p-1}^{N,s})$, denote as $(x_{p-1}^{(1),s},\dots,x_{p-1}^{(N),s})$ for $s\in\{l,l-1\}$ ($x_{p-1}^{(1),s}\leq\cdots\leq x_{p-1}^{(N),s}$).}
\item{Generate $U\sim\mathcal{U}_{[0,1]}$. Set 
\begin{eqnarray*}
x_{p-1}^s & = & \min\Bigg\{x\in\{x_{p-1}^{(1),s},\dots,x_{p-1}^{(N),s}\}:u<\sum_{k=1}^j \frac{G_{p-1}(x_{p-1}^{(k),s})}{\sum_{i=1}^N G_{p-1}(x_{p-1}^{(i),s})}\cap \\ & & u\geq \sum_{k=1}^{j-1} \frac{G_{p-1}(x_{p-1}^{(k),s})}{\sum_{i=1}^N G_{p-1}(x_{p-1}^{(i),s})}\Bigg\}
\end{eqnarray*}
for $s\in\{l,l-1\}$.
}
\item{Sample $u_p$ from $\check{Q}^l\Big((x_{p-1}^l,x_{p-1}^{l-1}),\cdot\Big)$.}
\end{enumerate}
\caption{Sampling from $\check{\Phi}_p^W(\overline{\eta}_{p-1}^{N,l},\overline{\eta}_{p-1}^{N,l-1})(\cdot)$ with $p\geq 1$ and for Partially Observed Diffusions.}
\label{alg:phi_samp}
\end{algorithm}
\begin{algorithm}
\begin{enumerate}
\item{For $i\in\{1,\dots,N\}$ sample $u_0^{i}$ from $\check{Q}^l\Big((x_{0},x_{0}),\cdot\Big)$. Set $p=1$.}
\item{For $i\in\{1,\dots,N\}$ sample $u_p^{i}$ from $\check{\Phi}_p^W(\overline{\eta}_{p-1}^{N,l},\overline{\eta}_{p-1}^{N,l-1})(\cdot)$. Set $p=p+1$ and return to the start of 2..}
\end{enumerate}
\caption{MLPF for Partially Observed Diffusions.}
\label{alg:mlpf_samp}
\end{algorithm}

\section{Theoretical Results}\label{sec:theory}

\subsection{Central Limit Theorem}
We give a CLT for the predictors, which can be used to prove a CLT for the filter, 
as detailed below. The proof is presented in Appendix~\ref{app:clt} where the operators in the asymptotic
variance are also explained. Below $ \Rightarrow$ is used to denote convergence in distribution as $N\rightarrow\infty$. 
\begin{theorem}\label{theo:clt_was}
Suppose that $\check{M}_n$, $M_n^s$, $s\in\{f,c\}$  are Feller for every $n\geq 1$ and $G_n\in\mathcal{C}_b(\mathsf{X})$ for every $n\geq 0$.
Then for any $n\geq 0$, $(\varphi,\psi)\in\mathcal{C}_b(\mathsf{X})\times\mathcal{C}_b(\mathsf{X})$
$$
\sqrt{N}[\check{\eta}_{n}^{N,W}-\check{\eta}_{n}^{W}](\varphi\otimes 1 - 1 \otimes \psi) \Rightarrow \mathcal{N}(0,\sigma_n^{2,W}(\varphi,\psi))
$$
where
\begin{equation}\label{eq:av_clt}
\sigma_n^{2,W}(\varphi,\psi) = \sum_{p=0}^n \check{\eta}_p^W([D_{p,n}^f(\varphi)\otimes 1- 1\otimes D_{p,n}^c(\psi)]^2).
\end{equation}
\end{theorem}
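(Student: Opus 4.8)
The plan is to argue by induction on $n$, combining (i) a weak law of large numbers for $\check\eta_p^{N,W}$ and for the marginal and weighted empirical measures, with (ii) a telescoping martingale decomposition of the error, carried out \emph{within the class of additively separable test functions} $\Psi=\varphi\otimes1-1\otimes\psi$, $(\varphi,\psi)\in\mathcal C_b(\mathsf X)\times\mathcal C_b(\mathsf X)$, which the recursion is seen to preserve. The observation that makes this class self-reproducing is that $\check M_n(\Psi)(x,x')=M_n^f(\varphi)(x)-M_n^c(\psi)(x')$, so that for any $\mu,\nu\in\mathscr P_F(\mathsf X)$
\[
\check\Phi_n^W(\mu,\nu)(\Psi)=\int_0^1\!\big[M_n^f(\varphi)(F_\mu^{-1}(w))-M_n^c(\psi)(F_\nu^{-1}(w))\big]\,dw=\mu\big(M_n^f(\varphi)\big)-\nu\big(M_n^c(\psi)\big),
\]
because $(F_\mu^{-1})$ pushes the Lebesgue measure on $(0,1)$ forward to $\mu$. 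Thus the Wasserstein coupling is invisible to the \emph{mean} of an additive $\Psi$; it survives only in the asymptotic \emph{variance}, through integrals of $(\Psi-\mathrm{const})^2$ — whose cross term $\varphi\otimes\psi$ is coupling-dependent — against the limiting couplings $\check\eta_p^W$, which is exactly the mechanism behind the variance reduction the paper advertises. The base case $n=0$ is the classical CLT for the i.i.d.\ initialisation from $\check\eta_0^W$.

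For the inductive step, writing $\mathcal G_{n-1}^N$ for the $\sigma$-algebra generated by the particles up to time $n-1$, split
\[
\sqrt N\,[\check\eta_n^{N,W}-\check\eta_n^W](\Psi)=\underbrace{\sqrt N\,[\check\eta_n^{N,W}-\check\Phi_n^W(\overline\eta_{n-1}^{N,f},\overline\eta_{n-1}^{N,c})](\Psi)}_{(\mathrm I)}+\underbrace{\sqrt N\,[\check\Phi_n^W(\overline\eta_{n-1}^{N,f},\overline\eta_{n-1}^{N,c})-\check\eta_n^W](\Psi)}_{(\mathrm{II})}.
\]
Given $\mathcal G_{n-1}^N$, term $(\mathrm I)$ is a normalised sum of i.i.d.\ centred bounded variables, hence a martingale increment; a conditional (triangular-array) CLT — conditional Lindeberg being immediate from boundedness and the conditional variance converging by the LLN and the continuity property of the resampling stated below — identifies its limit as the $p=n$ contribution $\mathcal N\!\big(0,\check\eta_n^W([\Psi-\check\eta_n^W(\Psi)]^2)\big)$. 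By the displayed identity, term $(\mathrm{II})$ equals $\sqrt N[\overline\eta_{n-1}^{N,f}-\overline\eta_{n-1}^f](M_n^f(\varphi))-\sqrt N[\overline\eta_{n-1}^{N,c}-\overline\eta_{n-1}^c](M_n^c(\psi))$, and a delta-method expansion of the self-normalisation $\overline\eta_{n-1}^{N,s}(g)=\eta_{n-1}^{N,s}(G_{n-1}g)/\eta_{n-1}^{N,s}(G_{n-1})$ rewrites it, modulo $o_{\mathbb{P}}(1)$, as $\sqrt N[\check\eta_{n-1}^{N,W}-\check\eta_{n-1}^W](g^f\otimes1-1\otimes g^c)$ with $g^s=\tfrac{G_{n-1}}{\eta_{n-1}^s(G_{n-1})}\big(M_n^s(\,\cdot\,)-\overline\eta_{n-1}^s(M_n^s(\,\cdot\,))\big)\in\mathcal C_b(\mathsf X)$, using that $G_{n-1}\in\mathcal C_b$ and $M_n^s$ is Feller. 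This is again additively separable, so the inductive hypothesis applies; iterating down to $p=0$ yields $\sqrt N[\check\eta_n^{N,W}-\check\eta_n^W](\Psi)=\sum_{p=0}^n\Delta_p^N+o_{\mathbb{P}}(1)$, where $\Delta_p^N$ is the local fluctuation of type $(\mathrm I)$ (respectively the initial i.i.d.\ fluctuation for $p=0$) of $D_{p,n}^f(\varphi)\otimes1-1\otimes D_{p,n}^c(\psi)$, the operators $D_{p,n}^s$ — defined recursively in Appendix~\ref{app:clt}, with centring built in so that their $\check\eta_p^W$-mean vanishes — accumulating the potentials $G_p$, the mutations $M_{p+1}^s$ and the self-normalisations, with $D_{n,n}^s=\mathrm{Id}$. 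Since $\Delta_p^N$ is $\mathcal G_p^N$-measurable with vanishing $\mathcal G_{p-1}^N$-conditional mean, $\{\Delta_p^N\}_{p=0}^n$ is a martingale difference array: cross-level covariances vanish, the asymptotic variances add, and one application of the martingale CLT gives $\sum_p\Delta_p^N\Rightarrow\mathcal N(0,\sigma_n^{2,W}(\varphi,\psi))$ with $\sigma_n^{2,W}$ as in \eqref{eq:av_clt}. (The CLT for the filter then follows by one further delta-method step applied to $\eta_n^s\mapsto\eta_n^s(G_n\,\cdot)/\eta_n^s(G_n)$.)

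The core difficulty — and the point where this argument departs from the classical particle-filter CLT — is the non-smooth, empirical-measure-dependent quantile resampling. Two things must be controlled. First, both the LLN step and the conditional-variance convergence for $\Delta_p^N$ require that $\check\Phi_p^W(\overline\eta_{p-1}^{N,f},\overline\eta_{p-1}^{N,c})(g)\to\check\Phi_p^W(\overline\eta_{p-1}^f,\overline\eta_{p-1}^c)(g)$ in probability for a \emph{general} $g\in\mathcal C_b(\mathsf X\times\mathsf X)$ — here $g=[\Psi-\mathrm{const}]^2$, which genuinely depends on the coupling and cannot be reduced to the marginals. This is obtained from the weak convergence $\overline\eta_{p-1}^{N,s}\Rightarrow\overline\eta_{p-1}^s$ (proved in the LLN step, the limit having the continuous CDF assumed in Section~\ref{sec:algo}), which forces $F_{\overline\eta_{p-1}^{N,s}}^{-1}(w)\to F_{\overline\eta_{p-1}^s}^{-1}(w)$ for Lebesgue-almost every $w\in(0,1)$ and $s\in\{f,c\}$ simultaneously; the Feller property of $\check M_n$ then yields a.e.\ convergence of the integrand $\check M_n(g)\big(F_{\overline\eta_{p-1}^{N,f}}^{-1}(w),F_{\overline\eta_{p-1}^{N,c}}^{-1}(w)\big)$, and dominated convergence (by $\|g\|$) closes it. Second, because the conditional law of each local fluctuation is data-dependent, the martingale CLT must be invoked in its triangular-array form with random conditional variances, whose convergence in probability has to be established along the recursion while keeping all $o_{\mathbb{P}}(1)$ remainders under control; verifying the attendant moment and continuity estimates uniformly in the recursion is where the bulk of the technical work lies.
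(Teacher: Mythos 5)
Your proposal is correct and follows essentially the same route as the paper: unrolling your induction yields exactly the paper's decomposition \eqref{eq:master_ind} into local martingale fluctuations $V_p^{N,s}(D_{p,n}^s(\cdot))$ plus self-normalisation remainders shown to be $o_{\mathbb{P}}(N^{-1/2})$ (Lemma~\ref{lem:r_cont_ind}), and your key technical point --- convergence of $\check\Phi_p^W(\overline\eta_{p-1}^{N,f},\overline\eta_{p-1}^{N,c})(g)$ for general $g\in\mathcal{C}_b(\mathsf{X}\times\mathsf{X})$ via quantile convergence and the Feller property of $\check M_n$, feeding the conditional variances of a martingale-array CLT --- is precisely the content of Lemmata~\ref{lem:cdf_inv_conv_was}--\ref{lem:was2} and Proposition~\ref{prop:field_ind}.
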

Below for $1\leq q<+\infty$, $\Sigma_n^{W}(\varphi_{1:q},\psi_{1:q})$ is an $q\times q$ real symmetric matrix. For $1\leq i,j\leq q$,
we denote the $(i,j)^{th}-$element of $\Sigma_n^{W}(\varphi_{1:q},\psi_{1:q})$ as $\Sigma_{n,(ij)}^{W}(\varphi_{1:q},\psi_{1:q})$.
\begin{cor}\label{cor:mv_clt}
Suppose that $\check{M}_n$, $M_n^s$, $s\in\{f,c\}$  are Feller for every $n\geq 1$ and $G_n\in\mathcal{C}_b(\mathsf{X})$ for every $n\geq 0$.
Then for any $n\geq 0$, $1\leq q <+\infty$, $(\varphi_1,\dots,\varphi_q,\psi_1,\dots,\psi_q)\in\mathcal{C}_b(\mathsf{X})^{2q}$
$$ 
\sqrt{N}([\check{\eta}_{n}^{N,W}-\check{\eta}_{n}^{W}](\varphi_1\otimes 1 - 1 \otimes \psi_1),\dots, 
[\check{\eta}_{n}^{N,W}-\check{\eta}_{n}^{W}](\varphi_q\otimes 1 - 1 \otimes \psi_q) )\Rightarrow \mathcal{N}_s(0,\Sigma_n^{W}(\varphi_{1:q},\psi_{1:q}))
$$
where for $1\leq i,j\leq q$
$$
\Sigma_{n,(ij)}^{W}(\varphi_{1:q},\psi_{1:q}) = \sum_{p=0}^n \check{\eta}_p^W([D_{p,n}^f(\varphi_i)\otimes 1- 1\otimes D_{p,n}^c(\psi_i)]
[D_{p,n}^f(\varphi_j)\otimes 1- 1\otimes D_{p,n}^c(\psi_j)]).
$$
\end{cor}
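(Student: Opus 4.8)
The plan is to deduce the multivariate statement from Theorem~\ref{theo:clt_was} by the Cram\'{e}r--Wold device. Fix $n\geq 0$, $1\leq q<+\infty$ and $(\varphi_1,\dots,\varphi_q,\psi_1,\dots,\psi_q)\in\mathcal{C}_b(\mathsf{X})^{2q}$, and set
$$
Z_N=\sqrt{N}\big([\check{\eta}_{n}^{N,W}-\check{\eta}_{n}^{W}](\varphi_1\otimes 1-1\otimes\psi_1),\dots,[\check{\eta}_{n}^{N,W}-\check{\eta}_{n}^{W}](\varphi_q\otimes 1-1\otimes\psi_q)\big)\in\mathbb{R}^q.
$$
It suffices to show that for every $t=(t_1,\dots,t_q)\in\mathbb{R}^q$ one has $t^{\top}Z_N\Rightarrow\mathcal{N}(0,t^{\top}\Sigma_n^W(\varphi_{1:q},\psi_{1:q})\,t)$. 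Writing $\Phi=\sum_{i=1}^q t_i\varphi_i$ and $\Psi=\sum_{i=1}^q t_i\psi_i$, linearity of $\mu\mapsto\mu(\varphi)$, of the tensor-product notation, and of $\check{\eta}_n^{N,W}$ and $\check{\eta}_n^{W}$ in the integrand gives
$$
t^{\top}Z_N=\sqrt{N}[\check{\eta}_{n}^{N,W}-\check{\eta}_{n}^{W}](\Phi\otimes 1-1\otimes\Psi).
$$
Since a finite linear combination of functions in $\mathcal{C}_b(\mathsf{X})$ again lies in $\mathcal{C}_b(\mathsf{X})$, the hypotheses of Theorem~\ref{theo:clt_was} hold with $(\varphi,\psi)=(\Phi,\Psi)$, whence $t^{\top}Z_N\Rightarrow\mathcal{N}(0,\sigma_n^{2,W}(\Phi,\Psi))$.

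The remaining point is the bookkeeping identity $\sigma_n^{2,W}(\Phi,\Psi)=t^{\top}\Sigma_n^W(\varphi_{1:q},\psi_{1:q})\,t$, which is a polarization argument. The operators $D_{p,n}^f$ and $D_{p,n}^c$ appearing in \eqref{eq:av_clt} are linear in their function argument (this is read off from their definition in Appendix~\ref{app:clt}, where they are built from compositions of the linear update operators associated with $M_p^s$, $G_p$ and the Wasserstein resampling), so $D_{p,n}^f(\Phi)=\sum_i t_i D_{p,n}^f(\varphi_i)$ and likewise for $D_{p,n}^c(\Psi)$. Substituting into \eqref{eq:av_clt} and expanding the square,
\begin{align*}
\Big[\sum_{i=1}^q t_i\big(D_{p,n}^f(\varphi_i)\otimes 1-1\otimes D_{p,n}^c(\psi_i)\big)\Big]^2
&=\sum_{i,j=1}^q t_i t_j\big(D_{p,n}^f(\varphi_i)\otimes 1-1\otimes D_{p,n}^c(\psi_i)\big) \\
&\qquad\times\big(D_{p,n}^f(\varphi_j)\otimes 1-1\otimes D_{p,n}^c(\psi_j)\big),
\end{align*}
then applying the linear functional $\check{\eta}_p^W$ and summing over $p=0,\dots,n$ yields exactly $\sum_{i,j=1}^q t_i t_j\,\Sigma_{n,(ij)}^W(\varphi_{1:q},\psi_{1:q})$, as required. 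Symmetry of $\Sigma_n^W$ is immediate from its formula, and positive semi-definiteness follows because $t^{\top}\Sigma_n^W t=\sigma_n^{2,W}(\Phi,\Psi)\geq 0$ is a sum of expectations of squares, so the limiting law is a genuine (possibly degenerate) Gaussian.

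By the Cram\'{e}r--Wold theorem, convergence of every one-dimensional projection to the corresponding Gaussian implies $Z_N\Rightarrow\mathcal{N}_q(0,\Sigma_n^W(\varphi_{1:q},\psi_{1:q}))$, which is the claim. I do not expect a genuine obstacle here: once Theorem~\ref{theo:clt_was} is available the corollary is essentially formal, and the only step requiring care is confirming the linearity of the $D_{p,n}^s$ in their argument and expanding the quadratic form so that the cross terms match $\Sigma_{n,(ij)}^W$. (Alternatively, one may avoid the polarization step by applying Theorem~\ref{theo:clt_was} to each coordinate and to the pairwise sums $(\varphi_i+\varphi_j,\psi_i+\psi_j)$, recovering the off-diagonal entries from $2\,\mathrm{Cov}(A,B)=\mathrm{Var}(A+B)-\mathrm{Var}(A)-\mathrm{Var}(B)$; this is the same computation organized differently.)
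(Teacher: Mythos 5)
Your proof is correct and follows exactly the route the paper intends: the paper's own proof is the one-line remark ``Follows easily by the Cramer--Wold device,'' and you have simply supplied the details (linearity of $\varphi\mapsto D_{p,n}^s(\varphi)$ and the expansion of the square in \eqref{eq:av_clt} into the quadratic form $t^{\top}\Sigma_n^W t$), all of which check out. Nothing further is needed.
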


\begin{proof}
Follows easily by the Cramer-Wold device and the proof is omitted.
\end{proof}
We note that
$$
\sqrt{N}\Big\{\frac{\eta_n^{N,f}(G_n\varphi)}{\eta_n^{N,f}(G_n)} - \frac{\eta_n^{N,c}(G_n\varphi)}{\eta_n^{N,c}(G_n)}-
\Big(\frac{\eta_n^{f}(G_n\varphi)}{\eta_n^{f}(G_n)} - \frac{\eta_n^{c}(G_n\varphi)}{\eta_n^{c}(G_n)}\Big)
\Big\} = 
$$
$$
\sqrt{N}\Big\{\frac{1}{\eta_n^{N,f}(G_n)}[\check{\eta}_n^{N,W}-\check{\eta}_n^{W}]((G_n \varphi)\otimes 1 - 1\otimes (G_n \varphi))
- \frac{\check{\eta}_n^{W}((G_n \varphi)\otimes 1 - 1\otimes (G_n \varphi))}{\eta_n^{f}(G_n)\eta_n^{N,f}(G_n)}\times
$$
$$
[\check{\eta}_n^{N,W}-\check{\eta}_n^{W}](G_n\otimes 1 - 1\otimes1) - \Big[
\frac{\eta_n^{N,c}(G_n\varphi)}{\eta_n^{N,c}(G_n)\eta_n^{N,f}(G_n)}
[\check{\eta}_n^{N,W}-\check{\eta}_n^{W}](G_n\otimes 1 - 1\otimes G_n) +
$$
$$
\frac{\check{\eta}_n^{W}(G_n\otimes 1 - 1\otimes G_n)}{\eta_n^{N,c}(G_n)\eta_n^{N,f}(G_n)}
[\check{\eta}_n^{N,W}-\check{\eta}_n^{W}](1\otimes (G_n\varphi) - 1\otimes 1) -
\frac{\check{\eta}_n^{W}(G_n\otimes 1 - 1\otimes G_n)\eta_n^{c}(G_n\varphi)}{\eta_n^{N,c}(G_n)\eta_n^{N,f}(G_n)\eta_n^{c}(G_n)\eta_n^{f}(G_n)}
\times
$$
$$
\Big[
\eta_n^{f}(G_n)[\check{\eta}_n^{N,W}-\check{\eta}_n^{W}](1\otimes G_n - 1\otimes 1) +
\eta_n^{c}(G_n)[\check{\eta}_n^{N,W}-\check{\eta}_n^{W}](G_n\otimes 1 - 1\otimes 1)
\Big]
\Big]\Big\}.
$$
Hence, to prove a CLT for the filter (and indeed a multivariate CLT by the Cramer-Wold device) we can use Slutsky along with 
Corollary~\ref{cor:mv_clt} and the delta method. As the resulting asymptotic variance term is rather complicated, 
we do not present this result for the sake of brevity. We will focus on 
considering the asymptotic variance in Theorem~\ref{theo:clt_was} in the sequel.

\subsection{Asymptotic Variance and Multilevel Considerations}

\subsubsection{Asymptotic Variance}

We consider bounding~\eqref{eq:av_clt}. The result will allow us to consider the 
utility of the algorithm in the context of multilevel applications,
such as in Section~\ref{sec:diff}. 

We make the following assumptions.
\begin{hypA}\label{hyp:1}
For every $n\geq 0$, $G_n\in \textrm{Lip}(\mathsf{X})\cap\mathcal{B}_b(\mathsf{X})$.
\end{hypA}
\begin{hypA}\label{hyp:2}
For every $n\geq 1$, $\varphi\in \textrm{Lip}(\mathsf{X})\cap\mathcal{B}_b(\mathsf{X})$
there exists a $C<+\infty$ such that for $s\in\{f,c\}$, we have for every $(x,y)\in\mathsf{X}\times\mathsf{X}$
$$
|M_n^s(\varphi)(x)-M_n^s(\varphi)(y)|\leq C|x-y|.
$$
\end{hypA}
\begin{hypA}\label{hyp:3}
For every $n\geq 0$, there exists a $C>0$ such that $\inf_{x\in\mathsf{X}}G_n(x) \geq C$.
\end{hypA}

Now define
$$
|\|M_n^f-M_n^c\|| := \sup_{\mathcal{A}}\sup_{x\in\mathsf{X}}|M_n^f(\varphi)(x)-M_n^c(\varphi)(x)| \
$$
where $\mathcal{A}=\{\varphi\in\mathcal{B}_b(\mathsf{X})\cap\textrm{Lip}(\mathsf{X}): \|\varphi\|\leq 1|\}$.
Set
\begin{eqnarray*}
|\|M_n^{f,c}\|| & = & \max\{|\|M_1^f-M_1^c\||,\dots,|\|M_n^f-M_n^c\||\} \\
\|\eta_n^{f,c}\| & = &  \max\{\|\eta_0^f-\eta_0^c\|_{\textrm{tv}},\dots,\|\eta_n^f-\eta_n^c\|_{\textrm{tv}}\} \\
\|\check{\eta}_n^{W}\| & = & \max\{\int_{\mathsf{X}^2}\check{\eta}_0^W(d(x^f,x^c))(x^f-x^c)^2,\dots,
\int_{\mathsf{X}^2}\check{\eta}_n^W(d(x^f,x^c))(x^f-x^c)^2
\}.
\end{eqnarray*}
Note that we are assuming that $\|\check{\eta}_n^{W}\|$ is finite.
\begin{prop}\label{prop:av}
Assume (A\ref{hyp:1}-\ref{hyp:3}). Then for any $n\geq 0$, $\varphi\in\mathcal{B}_b(\mathsf{X})$ there exists a $C<\infty$
such that
$$
\sigma_n^{2,W}(\varphi,\varphi) \leq C(|\|M_n^{f,c}\||^2 + \|\eta_n^{f,c}\|^2 + \|\check{\eta}_n^{W}\|)
$$
where $\sigma_n^{2,W}(\varphi,\varphi)$ is as in \eqref{eq:av_clt}.
\end{prop}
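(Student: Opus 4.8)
The plan is to expand the asymptotic variance $\sigma_n^{2,W}(\varphi,\varphi) = \sum_{p=0}^n \check{\eta}_p^W([D_{p,n}^f(\varphi)\otimes 1 - 1\otimes D_{p,n}^c(\varphi)]^2)$ term by term and control each summand by inserting and subtracting intermediate quantities. For a fixed $p$, I would write
$$
D_{p,n}^f(\varphi)(x^f) - D_{p,n}^c(\varphi)(x^c) = \bigl[D_{p,n}^f(\varphi)(x^f) - D_{p,n}^c(\varphi)(x^f)\bigr] + \bigl[D_{p,n}^c(\varphi)(x^f) - D_{p,n}^c(\varphi)(x^c)\bigr],
$$
so that, using $(a+b)^2 \le 2a^2 + 2b^2$ and integrating against $\check{\eta}_p^W$, the summand splits into (i) a term measuring the discrepancy between the fine and coarse "update-to-time-$n$" operators $D_{p,n}^f - D_{p,n}^c$ evaluated at the same point, and (ii) a term measuring the oscillation of the single operator $D_{p,n}^c(\varphi)$ across the fine/coarse particle pair, which under a Lipschitz-type bound on $D_{p,n}^c(\varphi)$ is controlled by $\int_{\mathsf{X}^2}(x^f-x^c)^2\,\check{\eta}_p^W(d(x^f,x^c)) \le \|\check{\eta}_n^W\|$.

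For term (ii), the key is to show that $D_{p,n}^c(\varphi)$ is Lipschitz with a constant uniform in the relevant parameters. This should follow by a backward induction in $p$ (from $n$ down), using that $D_{n,n}^c(\varphi)$ is essentially $\varphi$ minus a constant (hence Lipschitz by (A\ref{hyp:1}) together with $\varphi$ being treated via the same machinery), that each step of the recursion composes with $M_p^c$ and a normalized reweighting by $G_p$, and invoking (A\ref{hyp:2}) to propagate the Lipschitz property through $M_p^c$ and (A\ref{hyp:1}), (A\ref{hyp:3}) to handle the Boltzmann–Gibbs reweighting (the lower bound on $G_n$ in (A\ref{hyp:3}) keeps the normalizing denominators bounded away from zero, so the reweighting operator is Lipschitz-stable). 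The boundedness of $D_{p,n}^c(\varphi)$, needed to make the oscillation argument go through, comes from $\varphi\in\mathcal{B}_b(\mathsf{X})$ and the same recursion. For term (i), I would again use a telescoping/inductive argument: the difference $D_{p,n}^f - D_{p,n}^c$ accumulates, at each time step between $p$ and $n$, a contribution from $\|M_q^f - M_q^c\||$ (bounded by $|\|M_n^{f,c}\||$) and from the discrepancy between the normalized reweightings $\overline{\eta}_q^f$ and $\overline{\eta}_q^c$, which is controlled by $\|\eta_q^f-\eta_q^c\|_{\mathrm{tv}} \le \|\eta_n^{f,c}\|$ (again using (A\ref{hyp:3}) for stability of the normalization). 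Summing the geometric-type accumulation over the at most $n+1$ steps gives a bound of the form $C(|\|M_n^{f,c}\||^2 + \|\eta_n^{f,c}\|^2)$, with $C$ depending on $n$, on $\|\varphi\|$ and on the Lipschitz/lower-bound constants from (A\ref{hyp:1}--\ref{hyp:3}) but not on the discretization level.

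Combining the two bounds across all $p=0,\dots,n$ and absorbing the finitely many constants into a single $C$ yields $\sigma_n^{2,W}(\varphi,\varphi) \le C(|\|M_n^{f,c}\||^2 + \|\eta_n^{f,c}\|^2 + \|\check{\eta}_n^W\|)$, as claimed. The main obstacle I anticipate is establishing the uniform Lipschitz and boundedness estimates for the operators $D_{p,n}^s(\varphi)$ — in particular handling the normalized (Boltzmann–Gibbs) reweighting steps, since a quotient of integrals is only Lipschitz-stable once the denominator is bounded below, which is exactly where (A\ref{hyp:3}) enters, and keeping careful track of how the Lipschitz constants compound through the at most $n$ composed steps without blowing up in a way that depends on the level $l$. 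A secondary technical point is making precise the definition of $D_{p,n}^s$ (deferred to Appendix~\ref{app:clt} in the paper) and checking that the decomposition above is the natural one for extracting the three advertised quantities; I would lean on the standard Feynman–Kac perturbation estimates (as in~\cite{fk}) for the scaffolding and only need to re-derive the parts that track the fine/coarse difference explicitly.
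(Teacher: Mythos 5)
Your proposal is correct and follows essentially the same route as the paper: the paper's Lemmata~\ref{lem:av1}--\ref{lem:av4} carry out exactly your plan (Lipschitz propagation of $Q_{p,n}^s(\varphi)$ via (A\ref{hyp:1})--(A\ref{hyp:2}), a telescoping bound on $Q_{p,n}^f-Q_{p,n}^c$ at a common point in terms of $|\|M_n^{f,c}\||$, control of the normalizations via (A\ref{hyp:3}) and $\|\eta_p^f-\eta_p^c\|_{\textrm{tv}}$), yielding the pointwise bound $|D_{p,n}^f(\varphi)(x)-D_{p,n}^c(\varphi)(y)|\leq C(|x-y|+\|\eta_p^f-\eta_p^c\|_{\textrm{tv}}+\|\eta_n^f-\eta_n^c\|_{\textrm{tv}}+|\|M_n^{f,c}\||)$, which is then squared, integrated against $\check{\eta}_p^W$, and summed over $p$ just as you describe. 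The only cosmetic difference is that the paper performs the add-and-subtract at the level of $Q_{p,n}$ inside Lemma~\ref{lem:av3} rather than at the level of $D_{p,n}$, and note (as your argument implicitly requires) that $\varphi$ must in fact be taken Lipschitz, which is how the supporting lemmata are stated.
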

\begin{proof}
Follows by Lemma \ref{lem:av4} in Appendix~\ref{app:av_prfs}.
\end{proof}

\subsubsection{Partially Observed Diffusions}
Now in the context of Section~\ref{sec:diff} (see also Section~\ref{sec:pod_ex}), 
we consider the case where 
$f$ uses the diffusion with discretization $h_l$ and $c$ uses the diffusion with 
discretization $h_{l-1}$. Our objective is to utilize Proposition~\ref{prop:av} 
to deduce a complexity type theorem for an MLPF.

We have (under (D)) by~\cite[equation (2.4)]{del}
$$
|\|M_n^{f,c}\|| \leq C h_l
$$
and by \cite[Lemma D.2]{mlpf}
$$
\|\eta_n^{f,c}\| \leq C h_l
$$
where in both cases the constant $C$ can depend on $n$.  For any $n\geq 1$, we have by \cite[Proposition D.1]{mlpf}
$$
\int_{\mathsf{X}\times\mathsf{X}}(x_n^f-x_n^c)^2\check{\eta}_n^W\Big(d(x^f,x^c)\Big) \leq C(h_l+\int_{0}^1 (F_{\overline{\eta}_{n-1}^f}^{-1}(w)-F_{\overline{\eta}_{n-1}^c}^{-1}(w))^2 dw).
$$
We remark that in the case of $n=0$ the upper-bound is $Ch_l$.
Now, for ease of exposition, we shall assume that
$\mathsf{X}$ is compact. This is unrealistic in the setting of Section~\ref{sec:diff}, 
but can be relaxed. Then one has, by standard results on Wasserstein distances, that 
$$
\int_{\mathsf{X}\times \mathsf{X}}(x_n^f-x_n^c)^2\check{\eta}_n^W\Big(d(x_n^f,x_n^c)\Big) \leq C(h_l + \|\overline{\eta}_{n-1}^f-\overline{\eta}_{n-1}^c\|_{\textrm{tv}}).
$$
Using a simple adaptation of~\cite[Lemma D.2]{mlpf} and recalling (A\ref{hyp:1}-\ref{hyp:3}), one has
$$
\|\overline{\eta}_{n-1}^f-\overline{\eta}_{n-1}^c\|_{\textrm{tv}} \leq Ch_l.
$$
Hence we can deduce that there exists a constant $C$ that can depend on $n$ such that
$$
\|\check{\eta}_n^{W}\| \leq Ch_l.
$$
Thus we have shown that
\begin{equation}\label{eq:av_bound_pod}
\sigma_n^{2,W}(\varphi,\varphi) \leq C h_l.
\end{equation}

The significance of~\eqref{eq:av_bound_pod} in a multilevel context is as follows. 
Following the strategy described in Section~\ref{sec:pod_ex}, if one now runs $L$ MLPFs (with $N_l$ samples) as in Section~\ref{sec:algo}
with $1\leq l \leq L$, where $f$ uses the diffusion with discretization $h_l$ and $c$ uses the diffusion with discretization $h_{l-1}$.
One also runs a particle filter (with $N_0$ particles) targeting the predictor with discretization $h_0$ (see e.g.~\cite{mlpf}). Then, as a result
of our analysis, one expects that the \emph{finite} sample variance to be upper-bounded by
$$
C\sum_{l=0}^L\frac{h_l}{N_l}.
$$
This rate (exponent of $h_l$), in the case that the diffusion coefficient is non-constant, 
(in~\eqref{eq:sde}) retains the so-called `forward' rate, i.e., the variance one 
would obtain when there
is no data and one is exactly sampling the couplings of the euler discretizations. 
This is an improvement over the method in~\cite{mlpf} and appears to be one
of the few cases in the literature that (mathematically) verifies that the forward 
rate can be maintained for filtering, in theory. We can check, using the particle allocations
in~\cite{giles} and~\cite{mlpf}, for example, that even with an $\mathcal{O}(N_l\log(N_l))$ 
cost of resampling for the Wasserstein coupling (worst case) we still improve over the method 
of~\cite{mlpf} in terms of reducing the cost for a given MSE. This will be further explored in our numerical simulations.

We note that all of the constants in Proposition~\ref{prop:av} are time dependent. 
One can deal with the time behavior with considerable complication
of the analysis in  Appendix~\ref{app:av_prfs} (see \cite{jasra_yu} for instance). 

\section{Numerical Experiments}
\label{sec:numerics}

In this section, we numerically compare the complexity of the MLPFs associated with 
Algorithms~\ref{alg:coupled} and~\ref{alg:CDFcoupling}, in Appendix~\ref{app:alg}. 

We consider three partially observed diffusions. The exact dynamics of the diffusion 
processes are approximated by the Euler-Maruyama numerical scheme between observation 
times. Euler-Maruyama is a first order method to approximate SDEs dynamics, and thus the 
weak convergence has rate one and the strong convergence has rate one half, if the 
diffusion has a non-constant diffusion term, assuming sufficient regularity. In 
the case of constant diffusion coefficient, the strong rate of convergence is increased to one. 
The strong convergence rate is half the convergence rate of the variance of the 
increments between refinement levels, and the weak convergence corresponds to the bias.

The three diffusion processes are as follows.
The first is the Ornstein-Uhlenbeck process with constant diffusion considered 
in~\cite{mlpf}; the constant diffusion coefficient leads to twice the typical rate 
of strong convergence for the Euler-Maruyama. We then study a slightly modified 
version of the stochastic dynamics with a nonlinear diffusion term in~\cite{mlpf}, 
which has the usual rate of strong convergence. Finally, a double-well with constant 
diffusion is analyzed. The last example is different from the first two since the 
double-well does not satisfy the contractivity condition~\eqref{eq:contractivity} 
in Appendix~\ref{app:changeofmeasure} because the potential is composed of two wells. 
We establish that, as the process can jump between wells, the coupling in 
Algorithms~\ref{alg:coupled} and~\ref{alg:CDFcoupling} can be affected by the 
bistability of the process causing the possible split in different wells of the 
coarse and fine particles in the couple. The consequence is a degradation of the 
variance convergence, which significantly reduces the performance of the MLPF. 
This issue is relevant for many practical applications of MLPFs where the underlying 
SDEs do not possess the contractivity condition. To deal with this problem and 
overcome the variance degradation, we use a version of a change of measure 
described in~\cite{MLMC_no_contractivity} and summarized in Appendix~\ref{app:changeofmeasure}.

We constrain our numerical test to one dimensional dynamics. While Algorithm~\ref{alg:coupled} 
works in multiple dimensions, the extension of Algorithm~\ref{alg:CDFcoupling} 
to more than one dimension is the topic of upcoming work. A benefit of studying 
the one-dimensional case is that we can compare the MLPF results with more accurate 
reference solutions, which are harder to compute in higher dimensions.

For each model, we estimate the parameters that consist of the convergence rates 
of the variance, the bias, and the computational complexity. Then we use the 
estimated parameters to compute optimal levels $(l^{*}, \dots, L^{*})$ and 
optimal number of particles $N_{l^{*}:L^{*}}$ to build MLPFs to satisfy a sequence 
of decreasing tolerances $\epsilon_{0:k}$. We wish to study the observed errors 
and computational times for the MLPFs associated with Algorithms~\ref{alg:coupled} 
and~\ref{alg:CDFcoupling}. 

\subsection{Model settings} 

The general diffusion process we consider is
\begin{eqnarray}   
\mathrm{d}X_{t}&=&a(X_{t})\mathrm{d}t+b(X_{t})\mathrm{d}W_{t}, \qquad 0\leq t\leq T,   \label{eq:model_setting}
\end{eqnarray}
and $X_{0}\sim\mu(\cdot)$ with $\mu(\cdot)$ the initial distribution. Note that 
the initial distribution considered here is
more general than that of our analysis in previous sections.
Here, $\lbrace W_{t}\rbrace_{t\in [0,T]}$ is a Brownian motion and $X_{t}\in\mathbb{R}$.
In all three numerical test cases, the test function is $\varphi(x)=x$ and final 
time $T=500$, with observations equally spaced at $\delta=0.5$. 
We observe $Y_{n}$ at time $n \delta$, so our data are $(y_{1}, \dots, y_{D})$, with $D=\delta^{-1}T=1000$. 
We assume $Y_{n}|X_{n\delta}\sim \mathcal{N}(X_{n\delta},\tau^{2})$ with $\tau$ 
specified for each case. The goal is the estimation of the expected value of the 
filtering distribution $\mathbb{E}[\varphi(X_{D\delta})|y_{1:D}]$. Details of each example are described below.

\subparagraph{Ornstein-Uhlenbeck (OU)} In this example, from~\cite{mlpf}, 
\begin{eqnarray}
\mathrm{d}X_{t}&=&-\theta X_{t}\mathrm{d}t+\sigma\mathrm{d}W_{t}, \qquad 0\leq t\leq T,  \nonumber \label{eq:OU}
\end{eqnarray}
with $X_{0}=0$, $\theta=1$, $\sigma=0.5$, and $\tau^{2}=0.2$. 

\subparagraph{SDE with a nonlinear diffusion term (NDT)} In this slightly 
modified version of an example from~\cite{mlpf}, 
\begin{eqnarray}
\mathrm{d}X_{t}&=&-\theta X_{t}\mathrm{d}t+\frac{\sigma}{\sqrt{1+X^{2}_{t}}}\mathrm{d}W_{t}, \qquad 0\leq t\leq T,  \nonumber \\
X_{0}&\sim&\mathcal{N}(0,\tau^{2}),  \nonumber
\end{eqnarray}
with $\theta=1$, $\sigma=1$, and $\tau^{2}=0.1$. It differs from the example in~\cite{mlpf} in the distribution of the observation noise and in the initial distribution.

\subparagraph{Double-Well Constant Diffusion} Here,
\begin{eqnarray}
    \label{eq:DW_dynamics}
   dX_t  &=& a_\pi(X_t)\,dt + \sigma\,dW_t, \qquad 0\leq t\leq T,  \\
  X_0 &\sim& \mathcal{G}_{\pi}(x),  \nonumber
\end{eqnarray}
with $\sigma=1$, $a_\pi=-\frac{d\pi}{dx}$ for a double-well potential $\pi$ defined in Appendix~\ref{app:doublewell}, 
and $\mathcal{G}_{\pi}$ the equilibrium distribution of the dynamics. 
The variance of the observation noise is $\tau^{2}= 0.2$.

\subsection{Algorithm settings}

To estimate the parameters needed to construct the MLPFs, we consider $\mathcal{S}=5$ 
i.i.d.~time series. For each time series $i$, we repeat $\mathcal{R}=100$ i.i.d.~MLPFs 
on level~$l$ with $N_l=2^{13}$, which is the largest ensemble we consider for the parameter estimation.

We define $w^{l,1}_{n,j,i,u}$ as the weight of the coarser particle in coupled 
particle $j$ at time $n$ of the $u^{th}$ repeat for time series $i$ with time 
step $h_l$. We denote weights on level $l=0$ as $w^{0}_{n,j,i,u}$.
The effective sample size at time observation $n$ is defined as
\begin{eqnarray}   
ESS^{N_l}_{n,j,i,u} = 
\begin{cases}
\Bigg(\sum_{j=1}^{N_l} \Big(w^{l,1}_{n,j,i,u}\Big)^{2} \Bigg)^{-1},  & \text{if } l>0,   \label{eq:ess}  \\
\Bigg(\sum_{j=1}^{N_l} \Big(w^{0}_{n,j,i,u}\Big)^{2} \Bigg)^{-1},   & \text{if } l=0, 
\end{cases}
\end{eqnarray}
with $N_l$ denoting the number of particles on level $l$. Two separate expressions are needed as when $l=0$ the batch of propagated particles is not coupled.

For the OU and NDT cases, the resampling step by Algorithm~\ref{alg:coupled} or Algorithm~\ref{alg:CDFcoupling} is made at observation $n$ if~\eqref{eq:ess} is below a given threshold of $ESS^{N_l}_{n,j,i,u}<N_l/4$, while for the DW case, given the dynamics instability, the resampling is performed at each observation time.

\subsection{Numerical results}

\subsubsection{Parameter estimation}

To build the optimal hierarchies for the MLPFs, as mentioned in the introduction to Section~\ref{sec:numerics}, we need to estimate the variance and the bias convergence rates and the computational complexity.

We compute the quantity
\begin{eqnarray}
\widehat{\varphi}^{N_l}_{l,n,i,u}= 
\begin{cases}
\sum_{j=1}^{N_l}\Big(w^{l,2}_{n,j,i,u}\varphi(X^{l,2}_{n,j,i,u})-w^{l,1}_{n,j,i,u}\varphi(X^{l,1}_{n,j,i,u})\Big), & \text{if } l>0, \label{eq:varphi}   \\
\sum_{j=1}^{N_l}w^{0}_{n,j,i,u}\varphi(X^{0}_{n,j,i,u}),& \text{if } l=0.
\end{cases}
\end{eqnarray}
at each observation time $n\in\lbrace 1,\dots,D\rbrace$, each repeat $u\in\lbrace 1,\dots,\mathcal{R}\rbrace$, and each time series $i\in\lbrace 1,\dots,\mathcal{S}\rbrace$.

We compute the standard unbiased sample variance $\mathcal{V}^{l}_{i,n}$ across the repeats $\widehat{\varphi}^{N_l}_{l,n,i,1:\mathcal{R}}$. We average $\mathcal{V}^{l}_{i,n}$ over the observation times and time series to estimate the variance
\begin{equation}
V_{l}=\sum_{i=1}^{\mathcal{S}}\bigg(N_l \sum_{n=1}^{D}\mathcal{V}^{l}_{i,n}/D\bigg)/\mathcal{S}.    \label{eq:variance}
\end{equation}
From the same simulations used in~\eqref{eq:variance}, the weak convergence rate needed to build the optimal hierarchy is obtained from the bias estimates 
\begin{equation}
 B_{l}=\sum_{i=1}^{\mathcal{S}}\bigg(\sum_{n=1}^{D}Q^{90}(\widehat{\varphi}^{N_{l+1}}_{l+1,n,i,1:\mathcal{R}})/D\bigg)/\mathcal{S} \label{eq:bias}
\end{equation}     
with $Q^{90}(\cdot)$ the $90\%$ percentile estimated on quantities $\widehat{\varphi}^{N_l}_{l,n,i,1:\mathcal{R}}$ defined in \eqref{eq:varphi}.  

The workstation has $47.2$ GiB of memory and Intel Xeon processor with twelve CPUs X5650 with 2.67 GHz; the operating system is Ubuntu 18.04.4 LTS. The numerical test are programmed in \textit{python 3.7.0} and the wall clock times are measured using library \textit{timeit}.

We perform three numerical tests to analyze and estimate the computational complexity. Firstly, we consider the computational time of the dynamics to propagate $N_l$ coupled particles with time step $h_{l}$ up to the final time $T$. Secondly, we compare it with the time for resampling $N_l$ coupled particles. The comparison of these two quantities is important to check which part of the process dominates the total complexity. Finally, the total cost per coupled particle on level $l$, $W_{l}$, is estimated and used to build the optimal hierarchies.

\paragraph{Ornstein-Uhlenbeck}

Now we illustrate the results for the OU case. Level $l$ corresponds to time step $h_{l}=\delta 2^{-l}$. 

Figure~\ref{fig:ou_variance} shows the variance estimated over a grid of various 
sizes of particle ensembles and time steps. If the sample variance~\eqref{eq:variance} 
is close to the true variance, it must converge with respect to the time steps but 
should not converge with respect to the particle ensemble sizes. Since the 
diffusion term of the OU process is constant, the variance convergence is expected 
to be of rate two. This is the rate that we achieve when we use Algorithm~\ref{alg:CDFcoupling} 
but it degrades to one with Algorithm~\ref{alg:coupled}, consistent with Corollary~4.4 in~\cite{mlpf}. 
Specifically, the rate estimated for $V_{l}$ by a least squares fit is $2.07$ for 
Algorithm~\ref{alg:CDFcoupling} and $1.27$ for Algorithm~\ref{alg:coupled}.

The bias, $B_{l}$, is in Figure~\ref{fig:ou_bias}. We can observe that it converges 
with rate close to $1$ as expected: rate $0.98$ for algorithm \ref{alg:coupled} and 
rate $1.06$ for algorithm \ref{alg:CDFcoupling}.

The left part of Figure~\ref{fig:ou_costs} shows how the cost of approximating 
the particle dynamics depends on the time step and the ensemble size. 
We observe that the cost is inversely proportional to $h_l$, as expected, but 
due to computational overhead it only becomes approximately proportional to $N$ 
for $N$ larger than $2^{10}$.
From the right part of Figure~\ref{fig:ou_costs}, we see that the particle 
dynamics dominates the resampling costs regardless of which resampling algorithm is used.

To estimate the computational complexity per coupled particle $W_{l}$, we sum the 
particle dynamics and resampling cost for the largest tested particle ensemble 
$N_{l}=2^{13}$ and then normalize with respect $N_l$. 
The resulting $W_l$, shown in the right part of Figure~\ref{fig:ou_costs}, is 
approximately inversely proportional to the time~step size, $h_l$, as expected. 
Note that since the estimate $W_l$ does not take into account the overhead 
associated with computing on a small number of particles, it may lead to suboptimal computational times.

\paragraph{SDE with a nonlinear diffusion term}

We use the same setup as in the OU case, but the results are slightly different 
from the previous case, since the non-constant diffusion term affects the variance 
convergence rate. Indeed, the variance rate of convergence is one for Algorithm~\ref{alg:CDFcoupling} 
and is $1/2$ for Algorithm~\ref{alg:coupled}, as theoretically predicted in 
Corollary~4.4 in~\cite{mlpf}. The fitted rates of $V_l$ in Figure~\ref{fig:ndt_variance} 
is $1.15$ for Algorithm~\ref{alg:CDFcoupling} and $0.64$ for Algorithm~\ref{alg:coupled}.

The bias, $B_{l}$, which we theoretically expect to converge with rate one, converges 
with rate $0.9$ in Figure~\ref{fig:ndt_bias} with Algorithm~\ref{alg:CDFcoupling}, 
while it degrades to $0.74$ with Algorithm~\ref{alg:coupled}. 
To avoid giving an unfair disadvantage to Algorithm \ref{alg:coupled} compared to 
Algorithm~\ref{alg:CDFcoupling}, we use the estimated rate from Algorithm~\ref{alg:CDFcoupling} 
to build optimal hierarchies for both resampling algorithms.

A numerical illustration of the dynamics and resampling costs are completely analogous 
to displayed results for the OU case in Figure~\ref{fig:ou_costs}. The computational 
complexity per coupled particle $W_{l}$ is estimated as described for the OU case, and 
shown in Figure~\ref{fig:ndt_work}. We observe the expected rate of one.

\paragraph{Double-Well Constant Diffusion}

For this numerical test, level $l$ corresponds to time step $h_{l}=\delta 2^{-(4+l)}$. 
Here, the coarsest discretization time step is chosen to be smaller than the time 
between observation, $\delta$, due to stability constraints of the numerical scheme. 
Unlike in the previous two examples, the drift coefficient function of the DW model 
is not globally Lipschitz, but it satisfies only the one-sided Lipschitz 
condition~\eqref{eq:onesidedlipschitz}. To guarantee the stability of the simulations 
in an infinite time interval, the time step has to satisfy Assumption~8 in~\cite{no_globally_lip}. 

For the DW case, we directly compare the obtained averaged variance convergence 
with and without the change of measure described in Appendix~\ref{app:changeofmeasure} 
for both algorithms in Figure~\ref{fig:dw_variance}. Although we do not observe 
a drastic change with or without the change of measure with Algorithm~\ref{alg:coupled}, 
the variance convergence is substantially improved with Algorithm~\ref{alg:CDFcoupling} 
and small particle ensemble sizes. Therefore, we implement the change of measure 
for both algorithms in the construction of the MLPFs. 

We expect to observe the same convergence for DW as for OU, since both have constant 
diffusion coefficients. Using the change of measure, the measured convergence rate 
for $V_{l}$ as in~\eqref{eq:variance} is $2.23$ for Algorithm~\ref{alg:CDFcoupling} 
and $1.15$ for Algorithm~\ref{alg:coupled}. 

In Figure~\ref{fig:dw_bias}, the bias $B_{l}$ converges nearly with rate one; 
the rate with Algorithm~\ref{alg:CDFcoupling} is $1.1$ and $1.05$ with 
Algorithm~\ref{alg:coupled}. Figure~\ref{fig:dw_work} shows that computational 
work per coupled particle, $W_{l}$, is inversely proportional to $h_l$, as expected.

\subsubsection{Creation of the MLPF estimators from the determined parameters}

Having determined the parameters $B_{l}$, $V_{l}$, and $W_{l}$ as described in the 
previous section, we fix a sequence of decreasing tolerances
\begin{equation}
\epsilon_{k}=\epsilon_{1}\bigg(\dfrac{1}{\sqrt{2}}\bigg)^{k},   \label{eq:tol}
\end{equation}
with $\epsilon_{1}=3/100$, for the convergence study. For the OU and NDT cases, 
$k\in\lbrace 0,\dots,8\rbrace$, while for the DW case, $k\in\lbrace 0,\dots,9\rbrace$. 
With the specified $\epsilon_{k}$, we use the estimated variances $V_{l}$ and biases 
$B_{l}$ to compute the optimal levels $(l^{*},\dots,L^{*})$ and coupled particles on 
each level $N^{*}_{l^{*}:L^{*}}$ for the MLPFs via Algorithm~1 described in 
Section~5.2.2 in~\cite{seismology}. The algorithm is briefly described below.

The goal of the algorithm is to determine an MLPF, $\varphi^{*,L^{*}}_{k,i}(X_{T})$, 
that, for time series $i$, satisfies
\begin{eqnarray}    \label{eq:failure_probability}
|\mathbb{E}[\varphi(X_{T})]-\varphi^{*,L^{*}}_{k,i}(X_{T})|\leq \epsilon_{k}, \text{\qquad with probability $1-\xi$, for $0<\xi\ll 1$,}
\end{eqnarray}
and that has the minimum computational cost among the set of all the feasible MLPFs.
To satisfy the failure probability~\eqref{eq:failure_probability}, we control the 
bias and the statistical error separately by introducing a parameter $\phi\in(0,1)$ and requiring 
\begin{eqnarray}
|\mathbb{E}[\varphi(X_{T})-\varphi^{*,L}_{k,i}(X_{T})]|&\leq& (1-\phi)\epsilon_{k}   \label{eq:bias_constr}  \\
P[|\mathbb{E}[\varphi^{*,L^{*}}_{k,i}(X_{T})]-\varphi^{*,L^{*}}_{k,i}(X_{T})|>\phi \epsilon_{k}]&\leq& \xi.   \label{eq:statistical_error}
\end{eqnarray}     
To satisfy~\eqref{eq:statistical_error}, the asymptotic normality of coupled particle 
filters, in Theorem~\ref{theo:clt_was}, motivates the requirement
\begin{eqnarray}    
\mathbb{V}\textrm{ar}[\varphi^{*,L^{*}}_{k,i}(X_{T})]\leq \Bigg(\dfrac{\phi \epsilon_{k}}{C_{\xi}}\Bigg)^{2},    \label{eq:error_bound}
\end{eqnarray}
where $C_{\xi}$ is a confidence parameter such that $\Phi(C_{\xi})=1-\frac{\xi}{2}$ 
with $\Phi$ the cumulative distribution function of a standard normal random variable. 
Here, we fix $C_{\xi}=2$, which corresponds to $\xi\approx0.05$.

Given the available time step sizes, $h_l=h_0 2^{-l}$, $l=0,1,\dots$, the MLPF 
is characterized by the included time step sizes, corresponding to levels 
$l=l_0,\dots,L$ with $0\leq l_0\leq L$, and the number of particles 
$\{N_l\}_{l=l_0}^L$. Here, $L$ must be large enough for the bias estimate, $B_L$, 
to satisfy the bias constraint~\eqref{eq:bias_constr} for some $\phi\in(0,1)$. 
Any permissible choice of $L$ implicitly defines $\phi=1-B_L/\epsilon_{k}$ and
thus the constraint on the permissible variance in~\eqref{eq:error_bound}. 
Given $l_0$ and $L$, the optimal number of particles are given by 
\begin{equation*}
  N_l = \left\lceil\Bigg(\dfrac{C_{\xi}}{\phi \epsilon_{k}}\Bigg)^{2}
    \sqrt{\dfrac{V_l}{W_l}}\sum_{l=l_0}^L\sqrt{V_l W_l}\right\rceil,
\end{equation*}
as in standard MLMC. Finally, the optimal MLPF 
$(l_0^{*},L^{*},N^{*}_{l_0^{*}:L^{*}})$ is obtained by minimizing the estimated
work, $W=\sum_{l=l_0}^L W_{l}N_l$, over all permissible choises of $l_0$ and $L$.
The estimated optimal hierarchies for the OU, NDT, and DW examples are given 
in Table~\ref{tab:ou_hierarchy}, \ref{tab:ndt_hierarchy}, 
and~\ref{tab:dw_hierarchy} respectively. 

To study the accuracy and the efficiency of the MLMF with resampling by 
Algorithm~\ref{alg:coupled} and Algorithm~\ref{alg:CDFcoupling}, respectively, 
we generate $100$ i.i.d.~time series. For the $i^{th}$ such time series and 
given tolerance $\epsilon_{k}$, the MLPF estimator is defined as
\begin{eqnarray}   \label{eq:tolerances}
\varphi^{*,L^{*}}_{k,i}(X_{T})=\sum_{l=l_0^{*}}^{L^{*}}\widehat{\varphi}^{N^{*}_l}_{l,n,i,1},
\end{eqnarray}
with $\widehat{\varphi}^{N^{*}_l}_{l,n,i,1}$ in \eqref{eq:varphi} and $N^{*}_{l}$ 
the optimal number of coupled particles on level $l$ for a given $\epsilon_{k}$. 
In the case where $l_0>0$, the estimators in~\eqref{eq:varphi} use the single 
level particle filter on level $l=l_0$ and the coupled particles filters on all 
subsequent levels $l>l_0$.

\subsubsection{Observed errors and computational times for the constructed MLPFs}

For each time series $i\in\lbrace 1,\dots,100\rbrace$, we compute the reference 
solution $\widehat{\varphi}_{ex,i}(X_T)$ of the expected value of the filtering 
distribution $\mathbb{E}[\varphi(X_{T})|y_{1:D}]$. For the OU case, the reference 
solution is the exact one computed by the Kalman Filter, while for the NDT and 
DW cases, we approximate the solution of the corresponding Fokker-Planck equations 
numerically with accuracies that guarantee the numerical errors to be negligible 
in the numerical experiments. 

The error compared to the reference solution is
\begin{eqnarray}
E_{k,i}=|\widehat{\varphi}_{ex,i}(X_T)-\varphi^{*,L^{*}}_{k,i}(X_{T})|.   \label{eq:E_k_i}
\end{eqnarray}
The goal is to obtain errors $\lbrace E_{k,i}\rbrace_{u=1}^{100}$ bounded by 
$\epsilon_{k}$ with high probability; with the choice of $C_{\xi}$ above, we 
expect this goal to be satisfied in around $95\%$ of cases.

We estimate the total cost of generating $\varphi^{*,L^{*}}_{k,u}(X_{T})$ and 
compare to the theoretical complexity.

\paragraph{Ornstein-Uhlenbeck (OU)} The results are illustrated in 
Figures~\ref{fig:ou_tolvserror} and~\ref{fig:ou_actualwork}. 
Figure~\ref{fig:ou_tolvserror} displays the errors $\lbrace E_{k,i}\rbrace_{i=1}^{100}$ 
for the sequence of tolerances~\eqref{eq:tol}. 
On average $3\%$ of the errors are larger than the corresponding tolerances when 
using Algorithm~\ref{alg:coupled} and $5\%$ with Algorithm~\ref{alg:CDFcoupling}. 
In Figure~\ref{fig:ou_actualwork}, we observe that the asymptotic computational 
time is proportional to $\epsilon^{-2}$ with Algorithm~\ref{alg:CDFcoupling}, 
while it is proportional to $\epsilon^{-2}\log(\epsilon)^{2}$ with 
Algorithm~\ref{alg:coupled}, as theoretically predicted. 

\paragraph{SDE with a nonlinear diffusion term (NDT)} 
Figure~\ref{fig:ndt_tolvserror} shows higher rates of failure to meet the 
the prescribed tolerance than in the OU case for both resampling algorithms.
For Algorithm~\ref{alg:coupled} and~\ref{alg:CDFcoupling} the failure rates are 
on average $12\%$ and $8\%$, respectively. Figure~\ref{fig:ndt_actualwork} shows 
that the actual complexity of the MLPFs is $\epsilon^{-2}\log(\epsilon)^{2}$ 
with Algorithm~\ref{alg:CDFcoupling} and $\epsilon^{-3}$ with 
Algorithm~\ref{alg:coupled}. The increased complexity compared to the OU case is
due to the lower rate of strong convergence.

\paragraph{Double-Well Constant Diffusion (DW)} Here, the errors for the given
tolerances are shown in Figure~\ref{fig:dw_tolvserror}. Using 
Algorithm~\ref{alg:coupled}, the rate of failure is on average $7\%$, while using 
Algorithm~\ref{alg:CDFcoupling}, it is on average $4\%$. It can be seen in 
Figure~\ref{fig:dw_actualwork} that the observed computational times agree well 
with the theoretically expected asymptotic complexities, which are 
$\epsilon^{-2}$ when Algorithm~\ref{alg:CDFcoupling} is used, and 
$\epsilon^{-2}\log(\epsilon)^{2}$ with Algorithm~\ref{alg:coupled}. 

\paragraph{Conclusions of the numerical experiments} 
We can observe that for the OU and DW cases, the percentage of the independent 
runs that fail to meet the error tolerance is close to the target of $5\%$ used for 
the MLPFs construction. The rate of failure is slightly higher for the NDT case 
but this rate can likely be improved by increasing the numbers of independent 
time series used for the parameter estimations.  

The actual work for the three numerical cases is in agreement with the 
theoretically predicted complexity. Thus, we can conclude that, for a fixed 
decreasing sequence of tolerances, the MLPFs using the proposed resampling by
Algorithm~\ref{alg:CDFcoupling} are asymptotically cheaper than those 
using resampling by Algorithm~\ref{alg:coupled}.

\appendix

\section{Double-Well potential} \label{app:doublewell}

This numerical example has a state switching behaviour, illustrated in 
Figure~\ref{fig:dw_dynamics}, which is relevant to many potential applications 
of MLPFs.
The drift coefficient of the double-well diffusion equation~\eqref{eq:DW_dynamics} 
is $a_\pi=-\frac{d\pi}{dx}$ with $\pi$ the double-well potential
\begin{align}
  \label{eq:dwell_pot}
  \pi(x) & =  
    \begin{cases}
      k_1x + \left(x^2-1\right)^2,& |x|\leq k_2,\\
      k_1x + \sum_{j=0}^4c_j\left(|x|-k_2\right)^j, & k_2<|x|\leq 2k_2,\\
      k_1x + \sum_{j=0}^2\widehat{c}_j\left(|x|-2k_2\right)^j, & |x|>2k_2,\\
    \end{cases}
\end{align}
and with parameters defined in Table~\ref{tab:dwell_consts}. That is, 
\begin{align}
  \label{eq:drift_dwell_pot}
  a_\pi(x) & =  
    \begin{cases}
      -k_1 - 4x\left(x^2-1\right),& |x|\leq k_2,\\
      -k_1 - \sgnp{x}\sum_{j=1}^4jc_j\left(|x|-k_2\right)^{j-1}, & k_2<|x|\leq 2k_2,\\
      -k_1 - \sgnp{x}\left(\widehat{c}_1+2\widehat{c}_2\left(|x|-2k_2\right)\right), & |x|>2k_2.\\
    \end{cases}    \nonumber
\end{align}
The potential has been adjusted to quadratic growth outside an interval containing 
the two potential minima so that the drift coefficient function only grows linearly. 
To this  end, a parameter $k_2\geq\sqrt{2}$ is chosen such that the tilted
double-well potential is kept in $[-k_2,k_2]$ and the potential is a second degree 
polynomial outside $[-2k_2,2k_2]$.
By this construction, the drift coefficient function is in $\mathcal{C}^2(\rset)$
and has linear growth outside the interval $[-2k_2,2k_2]$.

The initial distribution is taken to be the Gibbs' measure, $\mathcal{G}_{\pi}$, defined as
\begin{equation}   \label{eq:gibbs}
 \mathcal{G}_{\pi}(x)=\dfrac{e^{-\frac{2}{\sigma^{2}}\pi(x)}}{\int_{-\infty}^{+\infty}e^{-\frac{2}{\sigma^{2}}\pi(x)}\mathrm{d}x}.
\end{equation}
The measure $\mathcal{G}_\pi$ corresponds to the equilibrium distribution of the stochastic dynamics.

\begin{table}[!ht]
  \centering
  \begin{tabular}{|c|c|c|}
    \hline
    \multicolumn{2}{|c|}{Parameters}  & Value \\
    \hline 
    $k_1$ & Tilt of double well; $|k_1|<\sqrt{\frac{64}{27}}$ \Tstrut & $\frac{\sqrt{2}}{12}$ \\
    \hline
    $k_2$ & Half length of unchanged interval; $k_2\geq\sqrt{2}$ \Tstrut & $\sqrt{2}$ \\
    \hline
    \multicolumn{3}{|c|}{Stationary Points (Depending on $k_1$)}\\
    \hline
    \multirow{3}{*}{$r_j$} & 
    \multirow{3}{*}
    {$\frac{2}{\sqrt{3}}\cos{\left(\frac{1}{3}
      \arccos{\left(-\frac{3\sqrt{3}k_1}{8}\right)}
      -\frac{2\pi j}{3}\right)}$, $j=0,1,2$} & -1.0144 \\ & & 0.0295\\ & & 0.9849 \\
    \hline
    \multicolumn{3}{|c|}{Polynomial Coefficients (Depending on $k_2$)}\\
    \hline
    $c_0$ & $\left(k_2^2-1\right)^2$ \Tstrut & \multicolumn{1}{|c|}{$1$} \\
    $c_1$ & $4k_2\left(k_2^2-1\right)$  \Tstrut & \multicolumn{1}{|c|}{$4\sqrt{2}$} \\
    $c_2$ & $2\left(3k_2^2-1\right)$ \Tstrut & \multicolumn{1}{|c|}{$10$} \\
    $c_3$ & $4k_2$ \Tstrut & \multicolumn{1}{|c|}{$4\sqrt{2}$} \\
    $c_4$ & $-1$ \Tstrut & \multicolumn{1}{|c|}{$-1$} \\
    \hline
    $\widehat{c}_0$ & $14k_2^4-8k_2^2+1$ \Tstrut & \multicolumn{1}{|c|}{$41$} \\
    $\widehat{c}_1$ & $c_2c_3$ \Tstrut & \multicolumn{1}{|c|}{$40\sqrt{2}$} \\
    $\widehat{c}_2$ & $2\left(6k_2^2-1\right)$ \Tstrut & \multicolumn{1}{|c|}{$22$} \\
    \hline
    \multicolumn{3}{|c|}{Stability Bound Euler-Maruyama (Depending on $k_2$)}\\
    \hline
    $h_\mathrm{max}$ & $\frac{1}{\widehat{c}_2}$ \Tstrut \Bstrut & \multicolumn{1}{|c|}{$\frac{1}{22}$}\\
    \hline    
  \end{tabular}
  \caption{Constants and parameters in the tilted double-well drift~\eqref{eq:DW_dynamics} 
    and potential~\eqref{eq:dwell_pot}.} 
  \label{tab:dwell_consts}
\end{table}

\section{Change of measure}          \label{app:changeofmeasure}

The change of measure described here is introduced in~\cite{MLMC_no_contractivity} 
for the case in which the dynamics~\eqref{eq:model_setting} have constant diffusion 
coefficient $\sigma=1$. We generalize the calculations for a generic constant 
$\sigma$.

Suppose that the dynamics~\eqref{eq:model_setting} does not satisfy the contractivity condition
\begin{eqnarray}
\langle x-y, a(x)-a(y)\rangle \leq -\lambda \|x-y \|^{2}.   \label{eq:contractivity}
\end{eqnarray} 
A change of measure that recovers property~\eqref{eq:contractivity} can be 
constructed by introducing a spring term with strength $S\in\mathbb{R}$ in the 
drift, provided that $S>\frac{\lambda}{2}$, where $\lambda$ is determined by the 
one-sided Lipschitz condition
\begin{eqnarray}
\langle x-y,a(x)-a(y)\rangle \leq \lambda \| x-y \|^{2}.       \label{eq:onesidedlipschitz}
\end{eqnarray} 

The objective here is a construction in continuous time that will be well approximated with MLMC. 
In a standard multilevel setup, given a diffusion dynamics~\eqref{eq:model_setting}, 
coarse and fine paths are simulated on two different measures, $\Q^{1}$ and $\Q^{2}$, respectively, so
\begin{subequations}
\begin{eqnarray}
\mathrm{d} X^{1}_{t}&=&a(X^{1})\mathrm{d}t+b(X^{1}_{t})\mathrm{d}W^{\mathbb{Q}^{1}}_{t}  \label{eq:Q1}  \\
\mathrm{d} X^{2}_{t}&=&a(X^{2})\mathrm{d}t+b(X^{2}_{t})\mathrm{d}W^{\mathbb{Q}^{2}}_{t}  \label{eq:Q2}
\end{eqnarray}
\end{subequations}
The change of measure consists in considering diffusion dynamics~\eqref{eq:Q1} 
and~\eqref{eq:Q2} under a common measure $\Pb$
\begin{eqnarray}
\mathrm{d}U_{t}^{1}&=&S(U_{t}^{2}-U_{t}^{1})\mathrm{d}t+a(U_{t}^{1})\mathrm{d}t+\sigma\mathrm{d}W^{\Pb}_{t},   \nonumber \\
\mathrm{d}U_{t}^{2}&=&S(U_{t}^{1}-U_{t}^{2})\mathrm{d}t+a(U_{t}^{2})\mathrm{d}t+\sigma\mathrm{d}W^{\Pb}_{t}.	 \nonumber
\end{eqnarray}
With quantity of interest $\varphi(\cdot)$, it holds by the Girsanov theorem that,
\begin{eqnarray}
\mathbb{E}_{\mathbb{Q}^{2}}\big[\varphi(X^{2}_{t})\big]-\mathbb{E}_{\mathbb{Q}^{1}}\big[\varphi(X^{1}_{t})\big]=\mathbb{E}_{\mathbb{P}}\bigg[\varphi(U^{2}_{t})\frac{\mathrm{d}\Q^{2}}{\mathrm{d}\Pb}-\varphi(U^{1}_{t})\frac{\mathrm{d}\Q^{1}}{\mathrm{d}\Pb}\bigg]=0  \label{eq:girsanov}
\end{eqnarray}
where $\frac{\mathrm{d}\Q^{2}}{\mathrm{d}\Pb}$ and $\frac{\mathrm{d}\Q^{1}}{\mathrm{d}\Pb}$ 
are Radon-Nikodym derivatives.

Assume that $\lbrace Z_t\rbrace$ follows the dynamics modeled by an SDE with a standard 
Brownian motion $\lbrace W_{t}\rbrace_{t\geq 0}$, as in~\eqref{eq:model_setting}. 
If we discretize the SDE with time step $h$ by Euler-Maruyama, we obtain 
$\lbrace Z_{t_n}\rbrace_{n\in\mathbb{N}}$. We define the function
\begin{eqnarray}
R(Z_{t_{n+1}},Z_{t_n},S,h)=\exp{\Big(-\dfrac{\langle \Delta W_{n},S\rangle}{\sigma}-\dfrac{\|S\|^{2}h}{2\sigma^{2}}\Big)},  \label{eq:R}
\end{eqnarray}  
with $\Delta W_{n}=W_{t_{n+1}}-W_{t_{n}}$. 
Then the Radon-Nikodym derivatives at time $p$ are discretized as 
\begin{eqnarray} 
\left.\frac{\mathrm{d}\mathbb{Q}^{2}}{\mathrm{d}\mathbb{P}}\right\vert_{p} \sim R^{l,2}_{p}&=&\prod_{q=0}^{2^{l}-1}R\Big(U^{l,2}_{p+h_{l}(q+1)},U^{l,2}_{p+h_{l}q},S(U^{l,1}_{p+h_{l}q}-U^{l,2}_{p+h_{l}q}),h_{l}\Big)     \label{eq:discretechangeofmeasure2} \\
\left.\frac{\mathrm{d}\mathbb{Q}^{1}}{\mathrm{d}\mathbb{P}}\right\vert_{p} \sim R^{l,1}_{p}&=&\prod_{q=0}^{2^{l-1}-1}R\Big(U^{l,1}_{p+h_{l}(2q+2)},U^{l,1}_{p+h_{l}2q},S(U^{l,2}_{p+h_{l}2q}-U^{1,l}_{p+h_{l}2q}),h_{l-1}\Big)    \label{eq:discretechangeofmeasure1}
\end{eqnarray}
where $h_l$ and $h_{l-1}$ are time step sizes.
It follows that \eqref{eq:girsanov} is discretized at time observation $k$ as
\begin{eqnarray}
\mathbb{E}_{\mathbb{Q}^{2}}\big[\varphi(X^{l,2}_{k})\big]-\mathbb{E}_{\mathbb{Q}^{1}}\big[\varphi(X^{l,1}_{k})\big]&=&\mathbb{E}_{\Pb}\Bigg[\varphi(U^{l,2}_{k})\prod_{p=1}^{k}R^{l,2}_{p}-\varphi(U^{l,1}_{k})\prod_{q=1}^{k}R^{l,1}_{p}\Bigg].	
\end{eqnarray}

\subsection{Particle filter in presence of change of measure}

We consider the predictor, firstly where no approximation is given, then with a 
Euler approximation and finally a combination of Euler and the particle/ML filter.

In the absence of bias, the predictor of the particle filter at time $k$ is
\begin{eqnarray} 
\eta_{k}(\varphi)=\frac{\mathbb{E}\Big[ \varphi(X_{k})\prod_{p=0}^{k-1}G_{p}(X_{p})\Big]}{\mathbb{E}\Big[\prod_{p=1}^{k-1}G_{p}(X_{p})\Big]}     \label{eq:predictor}
\end{eqnarray}
where $G_{p}$ is the likelihood density of the data point at observation time $p$ and the expectation is w.r.t.~the law of the diffusion process.
Discretizing~\eqref{eq:predictor} gives
\begin{eqnarray}
\eta_{k}^{l}(\varphi)=\frac{\mathbb{E}^{l}\Big[\varphi(X^{l}_{k})\prod_{p=0}^{k-1}G_p(X^{l}_{p})\Big]}{\mathbb{E}^{l}\Big[\prod_{p=0}^{k-1}G_p(X^{l}_{p})\Big]},    \nonumber
\end{eqnarray}
where now the expectation is w.r.t.~the law of the Euler discretized diffusion with time step $h_{l}$.
If we consider the change of measure for two dynamics on consecutive time step 
sizes, $X^{l,1}_{k}$ and $X^{l,2}_{k}$, the predictor at time $k$ is:
\begin{eqnarray}   \label{eq:discretepredictor}
\eta^{l,i}_{k}(\varphi)=\frac{\mathbb{E}^{l,i}\big[\varphi(X^{l,i}_{k})R^{l,i}_{k}\prod_{p=0}^{k-1}R^{l,i}_{p}G_p(X^{l,i}_{p})\big]}{\mathbb{E}^{l,i}\big[R^{l,i}_{k}\prod_{p=0}^{k-1}R^{l,i}_{p}G_p(X^{l,i}_{p})\big]}, \qquad i=1,2.  \nonumber
\end{eqnarray}
with $R^{l,1}_{p}$, $R^{l,1}_{k}$ and $R^{l,2}_{p}$, $R^{l,2}_{k}$ 
in~\eqref{eq:discretechangeofmeasure1} and~\eqref{eq:discretechangeofmeasure2}, 
respectively.
Consequently, the estimate of the predictors given a coupled particle ensemble 
$\lbrace X^{l,2}_{j},X^{l,1}_{j} \rbrace_{j=1}^{N_{l}}$, without resampling is
\begin{eqnarray} \label{eq:estimatediscretepredictor}
\widehat{\eta}^{l,i}_{k}(\varphi)=\frac{\sum_{j=1}^{N_{l}}\varphi(X^{l,i}_{k,j})R^{l,i}_{k,j} \prod_{p=0}^{k-1}R^{l,i}_{p,j}G_{p}(X^{l,i}_{k,j})}{\sum_{j=1}^{N_{l}}R^{l,i}_{k,j}\prod_{p=0}^{k-1}R^{l,i}_{p,j}G_{p}(X^{l,i}_{k,j})}, \qquad i=1,2.     \nonumber
\end{eqnarray}
If we resample at each observation time, then
\begin{eqnarray}
\widehat{\eta}^{l,i}_{k}(\varphi)=\frac{\sum_{j=1}^{N_{l}}\varphi(X^{l,i}_{k,j})R^{l,i}_{k,j}}{\sum_{j=1}^{N_{l}}R_{k,j}^{l,i}}, \qquad i=1,2     \nonumber
\end{eqnarray}
and the difference is simply
\begin{eqnarray}
\Delta \eta_{k}^{l}(\varphi) &=&\widehat{\eta}^{l,2}_{k}(\varphi)-\widehat{\eta}^{l,1}_{k}(\varphi).    \nonumber
\end{eqnarray}
Similar calculations can be performed for the filter, but are omitted.

\section{Algorithm Listings}\label{app:alg}

\subsection{Resampling Algorithms}
\label{app:resampling}

Here, we describe the two alternative resampling algorithms used with MLPFs in 
the present paper. 

\paragraph*{Particle Index Coupled Resampling Algorithm}

This is Algorithm~1, page~3074 in~\cite{mlpf}; listed as 
Algorithm~\ref{alg:coupled} here for completeness. The idea behind 
Algorithm~\ref{alg:coupled} is to minimize the probability of decoupling of the 
coarse and fine trajectories in the resampling step. However, this minimization comes 
with the cost of complete decoupling of the trajectories that are decoupled, in 
contrast to Algorithm~\ref{alg:CDFcoupling} proposed in this paper, which aims 
to correlate the coarse and fine particles of any pair through the CDF.

In the following, we denote by $F^{-1}$ the generalized inverse of a CDF, 
that is 
\begin{align*}
  F^{-1}(y) & = \inf\{s\colon F(s)\geq y\}.  
\end{align*}
Algorithm~\ref{alg:coupled} is based on this inverse of a CDF over the integer 
range of particle integers.

\subparagraph*{Complexity}

All five computations in the first eight lines of Algorithm~\ref{alg:coupled} 
are $\mathcal{O}(N)$. 
To find the indices $I$ or $(I_{1},I_{2})$, for each of the $N$ steps in the loop 
on lines 9--21,
one performs a search in a sorted array. An efficient algorithm to do so is the 
Binary Search Algorithm, which has average cost $\mathcal{O}(\log N)$. It follows 
that the total cost is $\mathcal{O}(N\cdot \log N)$.

\paragraph*{CDF Coupled Resampling Algorithm} 
This algorithm is based on inverting the empirical cumulative density function 
(CDF) of the particle positions, in one dimension, in order to obtain correlated 
coarse and fine level samples even after resampling. This procedure is described 
in Algorithm~\ref{alg:CDFcoupling}.

\subparagraph*{Complexity}

In Algorithm~\ref{alg:CDFcoupling}, line \ref{row:sort}, we sort two arrays 
of size $N$, which can be done in average complexity $\mathcal{O}(N\log N)$, 
using, for example, the Quicksort Algorithm.
In line~\ref{row:empiricalCDF}, the complexity of creating the reweighted 
empirical CDFs is $\mathcal{O}(N)$. 
In line~\ref{row:inver}, as pointed out before, the searches through two sorted 
arrays are done by Binary Search, at cost $\mathcal{O}(\log N)$.
We conclude that the total complexity of Algorithm~\ref{alg:CDFcoupling} is 
$\mathcal{O}(N\log N)$.

\newlength{\Inputindent}
\settowidth{\Inputindent}{\textbf{Input:}}
\begin{algorithm}[H]
  \hspace*{\algorithmicindent} \textbf{Input:} $N$ particle pairs
  $\lbrace X^{1}_{n},X^{2}_{n}\rbrace_{n=1}^{N}$ and normalized weights 
  $\lbrace w^{1}_{n}, w^{2}_{n}\rbrace_{n=1}^{N}$. \\
  \hspace*{\algorithmicindent} \textbf{Output}: $N$ particle pairs
  $\lbrace \widetilde{X}^{1}_{n},\widetilde{X}^{2}_{n}\rbrace_{n=1}^{N}$ and normalized weights 
  $\lbrace \widetilde{w}^{1}_{n}, \widetilde{w}^{2}_{n}\rbrace_{n=1}^{N}$. \\
  \hspace*{\algorithmicindent} \textbf{Note:} All random variables sampled are mutually independent.
  \begin{algorithmic}[1]
    \For{n=1:N}  \Comment{Compute overlapping part of probabilities on $1,\dots,N$}
      \State $w_{n}^{\mathtt{min}}\gets\min{\{w^{1}_n,w^{2}_n\}}$
    \EndFor
    \State $\alpha\gets\sum_{n=1}^N w_n^\mathtt{min}$
    \For{n=1:N}  \Comment{Compute cumulative distributions on $1,\dots,N$}
      \State $F_0(n)\gets\sum_{j=1}^n w^\mathtt{min}_{j}/\alpha = F_0(n-1)+w^\mathtt{min}_{n}/\alpha$
        \Comment{Shared emprical CDF}
      \State $F_j(n)\gets\sum_{i=1}^n (w_i^j-w_i^\mathtt{min})/(1-\alpha)$, for $j=1,2$
        \Comment{Distinct emprical CDFs}
    \EndFor
    \For{n=1:N}
      \State Sample $V_n\sim\mathcal{U}(0,1)$.
      \If{$V_n<\alpha$} 
        \Comment{With probability $\alpha$ sample the same index for both.}
        \State Sample $U_{n}\sim \mathcal{U}(0,1)$. 
        \State $I\gets {F_0}^{-1}(U_n)$ \label{row:noelse}
        \State $\lbrace\widetilde{X}^{1}_{n},\widetilde{X}^{2}_{n}\rbrace 
        \gets\lbrace X^{1}_{I},X^{2}_{I}\rbrace$
      \Else \label{row:else} 
        \Comment{With probability $1-\alpha$ sample the indices independently.}
        \State Sample $P_{n}\sim \mathcal{U}(0,1)$ and $Q_{n}\sim \mathcal{U}(0,1)$ independently.
        \State $\lbrace I_1,I_2\rbrace \gets
          \lbrace F_{1}^{-1}(P_{n}),F_{2}^{-1}(Q_n)\rbrace$ \label{row:else1} 
        \State
        $\lbrace\widetilde{X}^{1}_{n},\widetilde{X}^{2}_{n}\rbrace
        \gets \lbrace X^{1}_{I_1}, X^{2}_{I_2}\rbrace$\label{row:else2}
      \EndIf
      \State $(\widetilde{w}_n^1,\widetilde{w}_n^2) \gets (1/N,1/N)$
    \EndFor
  \end{algorithmic}
  \caption{Coupled Resampling through Particle Indices (from paper~\cite{mlpf})}
  \label{alg:coupled}
\end{algorithm}
\begin{algorithm}[H]
 \hspace*{\algorithmicindent} \textbf{Input:} $N$ particle pairs $\lbrace X^{1}_{n},X^{2}_{n}\rbrace_{n=1}^{N}$ and normalized weights $\lbrace w^{1}_{n}, w^{2}_{n}\rbrace_{n=1}^{N}$. \\
 \hspace*{\algorithmicindent} \textbf{Output}: $N$ particle pairs $\lbrace \widetilde{X}^{1}_{n},\widetilde{X}^{2}_{n}\rbrace_{n=1}^{N}$ and normalized weights $\lbrace \widetilde{w}^{1}_{n}, \widetilde{w}^{2}_{n}\rbrace_{n=1}^{N}$. \\
  \hspace*{\algorithmicindent} \textbf{Note:} All random variables sampled are mutually independent.
  \begin{algorithmic}[1]
	\State Sort $\lbrace\overline{X}^{i}_{n},\overline{w}^{i}_{n}\rbrace_{n=1}^{N}=\lbrace X^{i}_{I^{i}_n},w^{i}_{I^{i}_n}\rbrace_{n=1}^{N}$, with $I^{i}_{n}$ the index array that sorts $\lbrace X^{i}_{n}\rbrace^{N}_{n=1}$, for $i=1,2$.  \label{row:sort}
    \State $F_{i}(n) \gets \sum_{j=1}^{n}\overline{w}^{i}_{j}$, for $i=1,2$.   \label{row:empiricalCDF}
    \For{n=1:N}
        \State Sample $U_{n}\sim \mathcal{U}(0,1)$
        \State $\lbrace q^{1}_{n},q^{2}_{n}\rbrace \gets \lbrace F^{-1}_{1}(U_{n}),F^{-1}_{2}(U_{n})\rbrace$ \label{row:inver}
        \State $\lbrace\widetilde{X}^{1}_{n}, \widetilde{X}^{2}_{n}\rbrace \gets \lbrace\overline{X}^{1}_{q^{1}_n}, \overline{X}^{2}_{q^{2}_n}\rbrace$
    \State $(\widetilde{w}_n^1,\widetilde{w}_n^2) \gets (1/N,1/N)$
    \EndFor
  \end{algorithmic}
  \caption{Coupled Resampling through inverse CDF}
  \label{alg:CDFcoupling}
\end{algorithm}

\section{Proofs for the CLT}\label{app:clt}

The underlying strategy is that used in~\cite{jasra_yu} and we share various notational conventions and approaches used in that article.
The main new results in this paper are Lemmata~\ref{lem:ind1}-\ref{lem:cond_exp_was}. The other results have parallels in~\cite{jasra_yu} but are included for completeness  (note that Appendix C of this article differs from~\cite{jasra_yu}).
We set for $\mu\in\mathscr{P}(\mathsf{X})$, $s\in\{f,c\}$, $\varphi\in\mathcal{B}_b(\mathsf{X})$, $n\geq 1$
$$
\Phi_n^s(\mu)(\varphi) = \frac{\mu(G_{n-1}M_n^s(\varphi))}{\mu(G_{n-1})}.
$$
Denote the sequence of non-negative kernels $\{Q_n^s\}_{n\geq 1}$, $s\in\{f,c\}$, $Q_n^s(x,dy) = G_{n-1}(x) M_n^s(x,dy)$ and for $\varphi\in\mathcal{B}_b(\mathsf{X})$, $x_p\in\mathsf{X}$
$$
Q_{p,n}^s(\varphi)(x_p) = \int_{\mathsf{X}^{n-p}} \varphi(x_n) \prod_{q=p}^{n-1} Q_{q+1}^s(x_q,dx_{q+1})
$$
$0\leq p<n$ and in the case $p=n$, $Q_{p,n}^s(\varphi)(x)=\varphi(x)$.
Now denote for $0\leq p<n$, $s\in\{f,c\}$, $\varphi\in\mathcal{B}_b(\mathsf{X})$, $x_p\in\mathsf{X}$
$$
D_{p,n}^s(\varphi)(x_p) = \frac{Q_{p,n}^s(\varphi - \eta_n^s(\varphi))}{\eta_p^s(Q_{p,n}^s(1))}
$$
in the case $p=n$, $D_{p,n}^s(\varphi)(x)=\varphi(x)-\eta_n^s(\varphi)$.

For $p\geq 0$, $\varphi\in\mathcal{B}_b(\mathsf{X})$, $s\in\{f,c\}$
$$
V_p^{N,s}(\varphi) = \sqrt{N}[\eta_p^{N,s}-\Phi_p^s(\eta_{p-1}^{N,s})](\varphi)
$$
with the convention that $V_0^{N,s}(\varphi) = \sqrt{N}[\eta_0^{N,s}-\eta_{0})](\varphi)$.
For $n>0$, $0\leq p\leq n-1$, $\varphi\in\mathcal{B}_b(\mathsf{X})$, $s\in\{f,c\}$,
$$
R_{p+1}^{N,s}(D_{p,n}^s(\varphi)) = \frac{\eta_p^{N,s}(D_{p,n}^s(\varphi))}{\eta_p^{N,s}(G_p)}[\eta_p^{s}(G_p)-\eta_p^{N,s}(G_p)].
$$

Now we note that, using the calculations in \cite{mlsmc,ddj2012}
\begin{eqnarray}
\sqrt{N}[\check{\eta}_{n}^{N,W}-\check{\eta}_{n}^{W}](\varphi\otimes 1 - 1 \otimes \psi) & = &  \sum_{p=0}^n \{V_p^{N,f}(D_{p,n}^f(\varphi)) -V_p^{N,c}(D_{p,n}^c(\psi))\}
+ \nonumber\\ & &
\sqrt{N}\sum_{p=0}^{n-1} \{R_{p+1}^{N,f}(D_{p,n}^f(\varphi)) - R_{p+1}^{N,c}(D_{p,n}^c(\psi))\}\label{eq:master_ind}.
\end{eqnarray}
\begin{proof}[Proof of Theorem \ref{theo:clt_was}]
The proof follows immediately from \eqref{eq:master_ind}, Lemma \ref{lem:r_cont_ind}, and Proposition \ref{prop:field_ind}.
\end{proof}

\subsection{Technical Results}
\begin{prop}\label{prop:lp_bound_was}
For any $n\geq 0$, $s\in\{f,c\}$, $p\geq 1$ there exists a $C<+\infty$
such that for any $\varphi\in\mathcal{B}_b(\mathsf{X})$, $N\geq 1$
$$
\mathbb{E}[|[\eta_n^{N,s}-\eta_n^{s}](\varphi)|^{p}]^{1/p} \leq \frac{C\|\varphi\|}{\sqrt{N}}.
$$
\end{prop}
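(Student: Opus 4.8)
The plan is to prove this $\mathbb{L}_p$-bound by induction on $n$, using the recursive structure of the particle filter and standard Marcinkiewicz--Zygmund-type inequalities for martingale differences conditional on the previous generation. First I would set up the decomposition $[\eta_n^{N,s}-\eta_n^s](\varphi) = [\eta_n^{N,s}-\Phi_n^s(\eta_{n-1}^{N,s})](\varphi) + [\Phi_n^s(\eta_{n-1}^{N,s})-\Phi_n^s(\eta_{n-1}^s)](\varphi)$, treating the fine ($s=f$) and coarse ($s=c$) marginals separately (the coupling in the resampling step is irrelevant to the marginal estimators, since each marginal evolves exactly as an ordinary particle filter). The base case $n=0$ is immediate: the samples $x_0^{i,s}$ are i.i.d.\ from $\eta_0^s$ (being the marginal of the i.i.d.\ draws from $\check\eta_0$), so $[\eta_0^{N,s}-\eta_0^s](\varphi)$ is a normalized sum of i.i.d.\ centered bounded random variables and the Marcinkiewicz--Zygmund inequality gives the $C\|\varphi\|/\sqrt N$ rate directly.

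For the inductive step, the key observation is that, conditional on $\mathcal{F}_{n-1}^N$ (the sigma-algebra generated by the particles up to time $n-1$), the time-$n$ particles $x_n^{i,s}$ are, for the fine marginal, conditionally i.i.d.\ from $\Phi_n^s(\eta_{n-1}^{N,s})$ — this is exactly how the operator $\check\Phi_n^W$ acts marginally, since $\int_0^1 \delta_{F_{\bar\eta_{n-1}^{N,s}}^{-1}(w)}\,dw = \bar\eta_{n-1}^{N,s}$ pushed through $M_n^s$ gives $\Phi_n^s(\eta_{n-1}^{N,s})$. Hence $[\eta_n^{N,s}-\Phi_n^s(\eta_{n-1}^{N,s})](\varphi)$ is, conditionally, a normalized sum of i.i.d.\ centered variables bounded by $2\|\varphi\|$, so a conditional Marcinkiewicz--Zygmund inequality followed by taking expectations yields a term bounded by $C\|\varphi\|/\sqrt N$. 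For the remaining term, one writes
$$
[\Phi_n^s(\eta_{n-1}^{N,s})-\Phi_n^s(\eta_{n-1}^s)](\varphi) = \frac{[\eta_{n-1}^{N,s}-\eta_{n-1}^s](G_{n-1}M_n^s(\varphi))}{\eta_{n-1}^{N,s}(G_{n-1})} - \frac{\eta_{n-1}^s(G_{n-1}M_n^s(\varphi))}{\eta_{n-1}^s(G_{n-1})}\cdot\frac{[\eta_{n-1}^{N,s}-\eta_{n-1}^s](G_{n-1})}{\eta_{n-1}^{N,s}(G_{n-1})},
$$
and each factor $[\eta_{n-1}^{N,s}-\eta_{n-1}^s](\cdot)$ of a bounded function is controlled by the induction hypothesis, while $\eta_{n-1}^{N,s}(G_{n-1})^{-1}$ is handled using that $G_{n-1}$ is bounded below (or, more carefully, via a Minkowski/Hölder splitting together with the induction hypothesis applied to $G_{n-1}$ to show the denominator concentrates around $\eta_{n-1}^s(G_{n-1})>0$); the numerators involve bounded functions with sup-norm bounded by $\|\varphi\|\cdot\|G_{n-1}\|$ and sup-norm of $G_{n-1}$ respectively.

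The main obstacle I anticipate is the control of the random normalizing denominator $\eta_{n-1}^{N,s}(G_{n-1})$, which appears inverted: one needs to show $\mathbb{E}[\eta_{n-1}^{N,s}(G_{n-1})^{-p}]$ is bounded uniformly in $N$, or else localize on the event that the denominator is close to its limit and bound the contribution of the complementary event using the induction hypothesis and a high-moment estimate. The standard remedy — which is the route taken in the references \cite{mlsmc,ddj2012} and \cite{jasra_yu} whose conventions this appendix follows — is to first establish a slightly stronger statement with an explicit good event, or to exploit that assumption (A3)-type lower bounds on $G_n$ are not needed here since the denominators are bounded away from zero in $\mathbb{L}_p$ as a consequence of the induction hypothesis applied to a constant-order function. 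Everything else is a routine unrolling of the Marcinkiewicz--Zygmund inequality conditionally and then iterating; no new ideas beyond the standard particle-filter $\mathbb{L}_p$ machinery are required, since the Wasserstein coupling only affects the joint law and leaves each marginal a classical bootstrap particle filter.
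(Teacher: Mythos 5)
Your proposal is correct and is essentially the paper's proof: the paper simply defers to the standard induction strategy of Del Moral--Miclo (their Proposition 2.9), which is exactly the decomposition into a conditionally i.i.d.\ local sampling error (controlled by conditional Marcinkiewicz--Zygmund) plus a propagated error controlled by the induction hypothesis, made applicable here by your (correct) observation that each marginal of the Wasserstein-coupled filter is an ordinary bootstrap particle filter. The only refinement worth noting is that the random denominator $\eta_{n-1}^{N,s}(G_{n-1})$ is classically dispensed with by passing to the unnormalized measures $\gamma_n^{N,s}$ and using the elementary bound $|a/b|\leq |a|/b_0 + M|b-b_0|/b_0$ (valid whenever $|a/b|\leq M$), which makes every denominator deterministic and requires only $\eta_p^s(G_p)>0$ rather than a uniform lower bound on $G_p$ or inverse-moment estimates.
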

\begin{proof}
This can be proved easily, e.g.,~by the induction strategy in~\cite[Proposition 2.9]{dm2000}.
\end{proof}
\begin{lem}\label{lem:r_cont_ind}
For any $n>0$, $0\leq p\leq n-1$, $\varphi\in\mathcal{B}_b(\mathsf{X})$, $s\in\{f,c\}$,
$$
\sqrt{N}R_{p+1}^{N,s}(D_{p,n}^s(\varphi)) \rightarrow_{\mathbb{P}} 0.
$$
\end{lem}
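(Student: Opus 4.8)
The plan is to show that $\sqrt{N}R_{p+1}^{N,s}(D_{p,n}^s(\varphi))\to_{\mathbb{P}}0$ by isolating the three factors that make up $\sqrt{N}R_{p+1}^{N,s}$ and controlling each one separately. Recall from the definition that
$$
\sqrt{N}R_{p+1}^{N,s}(D_{p,n}^s(\varphi)) = \frac{\eta_p^{N,s}(D_{p,n}^s(\varphi))}{\eta_p^{N,s}(G_p)}\cdot\sqrt{N}\,[\eta_p^{s}(G_p)-\eta_p^{N,s}(G_p)].
$$
First I would observe that $D_{p,n}^s(\varphi)$ is a bounded measurable function (this follows from $\varphi\in\mathcal{B}_b(\mathsf{X})$, the boundedness of $G$, and the fact that the normalizing denominator $\eta_p^s(Q_{p,n}^s(1))$ is a fixed positive constant), so that $\|D_{p,n}^s(\varphi)\|<\infty$. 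Moreover $\eta_p^s(D_{p,n}^s(\varphi))=0$ by construction, since $Q_{p,n}^s(\varphi-\eta_n^s(\varphi))$ integrates to zero against $\gamma_p^s$ up to normalization; hence $\eta_p^{N,s}(D_{p,n}^s(\varphi)) = [\eta_p^{N,s}-\eta_p^s](D_{p,n}^s(\varphi))$.

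Next I would quantify the three pieces. By Proposition \ref{prop:lp_bound_was} applied with the bounded function $D_{p,n}^s(\varphi)$, we get $\mathbb{E}[|\eta_p^{N,s}(D_{p,n}^s(\varphi))|]\leq C N^{-1/2}$, so the numerator of the leading fraction is $O_{\mathbb{P}}(N^{-1/2})$ and in particular $\to_{\mathbb{P}}0$. Again by Proposition \ref{prop:lp_bound_was} applied to $G_p$ (which is bounded under the Feller/continuity hypotheses, or already bounded in the general model), $[\eta_p^{s}(G_p)-\eta_p^{N,s}(G_p)]\to_{\mathbb{P}}0$, and since $\eta_p^s(G_p)>0$ is a strictly positive constant, the denominator $\eta_p^{N,s}(G_p)$ converges in probability to $\eta_p^s(G_p)>0$, so $1/\eta_p^{N,s}(G_p) = O_{\mathbb{P}}(1)$. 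Finally, for the last factor, Proposition \ref{prop:lp_bound_was} gives $\mathbb{E}[|\sqrt{N}[\eta_p^{s}(G_p)-\eta_p^{N,s}(G_p)]|]\leq C\|G_p\|$, so $\sqrt{N}[\eta_p^{s}(G_p)-\eta_p^{N,s}(G_p)] = O_{\mathbb{P}}(1)$ (it is bounded in $L^1$, hence tight). Multiplying: the product is $(\text{term}\to_{\mathbb{P}}0)\times O_{\mathbb{P}}(1)\times O_{\mathbb{P}}(1)$, and by Slutsky-type arguments for convergence in probability this product converges to $0$ in probability, which is the claim.

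The only genuine point requiring care — and the step I would expect to be the mild obstacle — is confirming that $D_{p,n}^s(\varphi)$ really is in $\mathcal{B}_b(\mathsf{X})$ with a norm bound uniform enough for Proposition \ref{prop:lp_bound_was} to apply, and that $\eta_p^s(G_p)$ is bounded away from zero so the denominator is harmless; the former needs $\inf_x \eta_p^s(Q_{p,n}^s(1))$-type positivity (which holds because $\gamma$ integrals of strictly positive $G$'s against Markov kernels are positive), and the latter is immediate from positivity of $G_p$ together with $\eta_p^s$ being a probability measure (or from assumption (A\ref{hyp:3}) in the ML section, though we do not need the quantitative lower bound here, merely $\eta_p^s(G_p)>0$). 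Once these are in place the rest is a routine assembly of $L^p$ bounds, so I would keep that exposition brief.
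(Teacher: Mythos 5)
Your proposal is correct and follows essentially the same route as the paper: both arguments first dispose of the denominator $\eta_p^{N,s}(G_p)$ via its convergence in probability to a positive limit, and then control the product $\sqrt{N}\,\eta_p^{N,s}(D_{p,n}^s(\varphi))[\eta_p^{s}(G_p)-\eta_p^{N,s}(G_p)]$ using Proposition~\ref{prop:lp_bound_was}, the paper doing so by a Cauchy--Schwarz bound in $L^1$ of order $N^{-1/2}$ and you by an equivalent $o_{\mathbb{P}}(1)\times O_{\mathbb{P}}(1)$ bookkeeping. Your explicit observation that $\eta_p^s(D_{p,n}^s(\varphi))=0$, which is what makes $\eta_p^{N,s}(D_{p,n}^s(\varphi))$ of order $N^{-1/2}$, is left implicit in the paper and is a worthwhile clarification.
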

\begin{proof}
By Proposition \ref{prop:lp_bound_was} $\eta_p^{N,s}(G_p)$ converges in probability to a well-defined limit. Hence, we need only show that
$$
\sqrt{N}\eta_p^{N,s}(D_{p,n}^s(\varphi))[\eta_p^{s}(G_p)-\eta_p^{N,s}(G_p)]
$$
will converge in probability to zero. By Cauchy-Schwarz:
$$
\sqrt{N}\mathbb{E}[|\eta_p^{N,s}(D_{p,n}^s(\varphi))[\eta_p^{s}(G_p)-\eta_p^{N,s}(G_p)]|] \leq \sqrt{N}\mathbb{E}[|\eta_p^{N,s}(D_{p,n}^s(\varphi))|^2]^{1/2}\mathbb{E}[|[\eta_p^{s}(G_p)-\eta_p^{N,s}(G_p)]|^2]^{1/2}.
$$
Applying Proposition \ref{prop:lp_bound_was} it easily follows that there is a finite constant $C<+\infty$ that does not depend upon $N$ such that
$$
\sqrt{N}\mathbb{E}[|\eta_p^{N,s}(D_{p,n}^s(\varphi))[\eta_p^{s}(G_p)-\eta_p^{N,s}(G_p)]|] \leq \frac{C}{\sqrt{N}}.
$$
This bound allows one to easily conclude the following lemma.
\end{proof}
\begin{lem}\label{lem:ind1}
For any $p\geq 0$, $\varphi\in\mathcal{B}_b(\mathsf{X})$, $s\in\{f,c\}$
\begin{eqnarray*}
\mathbb{E}[V_p^{N,s}(\varphi)] & = &  0 \\
\lim_{N\rightarrow+\infty}\mathbb{E}[V_p^{N,s}(\varphi)^2] & = & \eta_p^s((\varphi-\eta_p^s(\varphi))^2).
\end{eqnarray*}
\end{lem}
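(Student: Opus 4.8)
The plan is to exploit that, for each fixed $s\in\{f,c\}$, the marginal particle system $\{x_p^{i,s}\}_{i=1}^N$ is exactly a standard particle filter. Precisely, let $\mathcal{F}_{p-1}^N$ denote the $\sigma$-algebra generated by $(u_0^{1:N},\dots,u_{p-1}^{1:N})$; then, conditionally on $\mathcal{F}_{p-1}^N$, the samples $x_p^{1,s},\dots,x_p^{N,s}$ are i.i.d.\ with common law $\Phi_p^s(\eta_{p-1}^{N,s})$, since the $s$-coordinate of $\check{\Phi}_p^W$ draws an ancestor from $\overline{\eta}_{p-1}^{N,s}$ (the generalized inverse CDF evaluated at a uniform) and then propagates it independently by $M_p^s$; for $p=0$ the $x_0^{i,s}$ are simply i.i.d.\ from $\eta_0^s$. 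Hence
\begin{equation*}
V_p^{N,s}(\varphi) = \frac{1}{\sqrt{N}}\sum_{i=1}^N\big(\varphi(x_p^{i,s}) - \Phi_p^s(\eta_{p-1}^{N,s})(\varphi)\big)
\end{equation*}
(with $\Phi_0^s(\eta_{-1}^{N,s})$ read as $\eta_0^s$) is a normalized sum of conditionally i.i.d., conditionally centered, bounded terms.

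Since $\varphi\in\mathcal{B}_b(\mathsf{X})$ gives $|V_p^{N,s}(\varphi)|\leq 2\sqrt{N}\|\varphi\|$, the variable is integrable and $\mathbb{E}[V_p^{N,s}(\varphi)] = \mathbb{E}\big[\mathbb{E}[V_p^{N,s}(\varphi)\mid\mathcal{F}_{p-1}^N]\big]=0$ because each summand has conditional mean zero. For the second moment, conditional independence yields
\begin{equation*}
\mathbb{E}\big[V_p^{N,s}(\varphi)^2\mid\mathcal{F}_{p-1}^N\big] = \Phi_p^s(\eta_{p-1}^{N,s})\big(\varphi^2\big) - \big(\Phi_p^s(\eta_{p-1}^{N,s})(\varphi)\big)^2 \leq \|\varphi\|^2 .
\end{equation*}
Applying Proposition~\ref{prop:lp_bound_was} to the bounded test functions $G_{p-1}$, $G_{p-1}M_p^s(\varphi)$ and $G_{p-1}M_p^s(\varphi^2)$, together with $\eta_{p-1}^s(G_{p-1})>0$, one gets $\Phi_p^s(\eta_{p-1}^{N,s})(\varphi)\to\Phi_p^s(\eta_{p-1}^s)(\varphi)$ and $\Phi_p^s(\eta_{p-1}^{N,s})(\varphi^2)\to\Phi_p^s(\eta_{p-1}^s)(\varphi^2)$ in probability (write the difference of the ratios in the usual additive form and control the denominator). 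Taking expectations above and invoking bounded convergence,
\begin{equation*}
\lim_{N\to\infty}\mathbb{E}\big[V_p^{N,s}(\varphi)^2\big] = \Phi_p^s(\eta_{p-1}^s)\big(\varphi^2\big) - \big(\Phi_p^s(\eta_{p-1}^s)(\varphi)\big)^2 = \eta_p^s\big((\varphi-\eta_p^s(\varphi))^2\big),
\end{equation*}
the last step using the recursion $\eta_p^s=\Phi_p^s(\eta_{p-1}^s)$. For $p=0$ the identity holds with no limit, the summands being exactly i.i.d.\ with law $\eta_0^s$.

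The one genuinely delicate point is the convergence $\Phi_p^s(\eta_{p-1}^{N,s})(\cdot)\to\Phi_p^s(\eta_{p-1}^s)(\cdot)$, i.e.\ pushing the $L^q$ control of Proposition~\ref{prop:lp_bound_was} through the nonlinear normalization in $\Phi_p^s$; this is handled exactly as in the proof of Lemma~\ref{lem:r_cont_ind}, bounding the random constant $\eta_{p-1}^{N,s}(G_{p-1})$ above by $\|G_{p-1}\|$ and noting it converges to the strictly positive constant $\eta_{p-1}^s(G_{p-1})$, so a Cauchy-Schwarz estimate closes the argument. Everything else is the classical one-step fluctuation computation for particle filters.
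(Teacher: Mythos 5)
Your proof is correct and follows essentially the same route as the paper: both rest on the conditional i.i.d.\ structure of the particles given the previous generation, the $L^p$ bound of Proposition~\ref{prop:lp_bound_was} to get convergence in probability of the bounded quantities $\Phi_p^s(\eta_{p-1}^{N,s})(\varphi)$ and the second-moment term, and bounded convergence to pass to the limit in expectation. The only cosmetic difference is that you organize the second moment via the conditional variance $\Phi_p^s(\eta_{p-1}^{N,s})(\varphi^2)-\big(\Phi_p^s(\eta_{p-1}^{N,s})(\varphi)\big)^2$, whereas the paper writes $\mathbb{E}[V_p^{N,s}(\varphi)^2]=\mathbb{E}[\varphi(X_p^{1,s})^2]-\mathbb{E}[\Phi_p^s(\eta_{p-1}^{N,s})(\varphi)^2]$ and controls the first term through the empirical measure $\eta_p^{N,s}(\varphi^2)$; these are the same computation.
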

\begin{proof}
$\mathbb{E}[V_p^{N,s}(\varphi)] =  0$ follows immediately from the expression, so we focus on the second property.
\begin{eqnarray*}
\mathbb{E}[V_p^{N,s}(\varphi)^2] & = &  \frac{1}{N}\sum_{i=1}^N \mathbb{E}[(\varphi(X_p^{i,s})-\Phi^s_p(\eta_{p-1}^{N,s})(\varphi))^2] \\
& = & \mathbb{E}[\varphi(X_p^{1,s})^2] - \mathbb{E}[\Phi^s_p(\eta_{p-1}^{N,s})(\varphi)^2]
\end{eqnarray*}
$\Phi^s_p(\eta_{p-1}^{N,s})(\varphi)$ is a bounded random quantity and moreover by Proposition \ref{prop:lp_bound_was} it converges in probability
to $\eta_p^s(\varphi)$. Hence, by~\cite[Theorem 25.12]{bill}, $\lim_{N\rightarrow+\infty}\mathbb{E}[\Phi^s_p(\eta_{p-1}^{N,s})(\varphi)^2] = \eta_p^s(\varphi)^2$.
Hence, we consider 
$$
\mathbb{E}[\varphi(X_p^{1,s})^2] = \mathbb{E}[\frac{1}{N}\sum_{i=1}^N \varphi(X_p^{i,s})^2 - \eta_p^s(\varphi^2)] + \eta_p^s(\varphi^2).
$$
By Jensen
$$
\mathbb{E}[\frac{1}{N}\sum_{i=1}^N \varphi(X_p^{i,s})^2 - \eta_p^s(\varphi^2)] \leq \mathbb{E}[|\frac{1}{N}\sum_{i=1}^N \varphi(X_p^{i,s})^2 - \eta_p^s(\varphi^2)|^2]^{1/2}
$$
and hence we conclude via Proposition \ref{prop:lp_bound_was} that 
$$
\lim_{N\rightarrow+\infty}\mathbb{E}[\varphi(X_p^{1,s})^2] = \eta_p^s(\varphi^2)
$$
and the result thus follows.
\end{proof}
\begin{lem}\label{lem:cdf_conv_was}
For any $n\geq 0$, $s\in\{f,c\}$, $x\in\mathsf{X}$
$$
F_{\overline{\eta}_n^{N,s}}(x) \rightarrow_{\mathbb{P}} F_{\overline{\eta}_n^{s}}(x).
$$
\end{lem}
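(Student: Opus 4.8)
The plan is to express the empirical CDF as a ratio of linear functionals of the marginal empirical measure $\eta_n^{N,s}$ and then apply Proposition~\ref{prop:lp_bound_was} twice. By construction one has $\overline{\eta}_n^{N,s}(\varphi) = \eta_n^{N,s}(G_n\varphi)/\eta_n^{N,s}(G_n)$ for every $\varphi\in\mathcal{B}_b(\mathsf{X})$, so taking $\varphi=\mathbb{I}_{(-\infty,x]}$ gives
$$
F_{\overline{\eta}_n^{N,s}}(x) = \frac{\eta_n^{N,s}(G_n\mathbb{I}_{(-\infty,x]})}{\eta_n^{N,s}(G_n)},
\qquad
F_{\overline{\eta}_n^{s}}(x) = \frac{\eta_n^{s}(G_n\mathbb{I}_{(-\infty,x]})}{\eta_n^{s}(G_n)}.
$$
First I would observe that $G_n\mathbb{I}_{(-\infty,x]}$ and $G_n$ both belong to $\mathcal{B}_b(\mathsf{X})$; in particular only boundedness and measurability of the indicator are needed here, not continuity of $F_{\overline{\eta}_n^{s}}$. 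Then Proposition~\ref{prop:lp_bound_was} with $p=2$ bounds
$$
\mathbb{E}\!\left[\big|[\eta_n^{N,s}-\eta_n^{s}](G_n\mathbb{I}_{(-\infty,x]})\big|^2\right]^{1/2}
+ \mathbb{E}\!\left[\big|[\eta_n^{N,s}-\eta_n^{s}](G_n)\big|^2\right]^{1/2}
\le \frac{C}{\sqrt{N}}
$$
for a finite $C$ independent of $N$ (the relevant supremum norms are at most $\|G_n\|$). Hence the numerator and the denominator each converge in $L^2$, and therefore in probability, to their deterministic limits $\eta_n^{s}(G_n\mathbb{I}_{(-\infty,x]})$ and $\eta_n^{s}(G_n)$.

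To finish I would invoke Slutsky's theorem. The standing assumption that $\overline{\eta}_n^s\in\mathscr{P}_F(\mathsf{X})$ is well defined forces $\eta_n^{s}(G_n)>0$, so the map $(a,t)\mapsto a/t$ is continuous at the limit point; consequently
$$
F_{\overline{\eta}_n^{N,s}}(x) = \frac{\eta_n^{N,s}(G_n\mathbb{I}_{(-\infty,x]})}{\eta_n^{N,s}(G_n)} \;\rightarrow_{\mathbb{P}}\; \frac{\eta_n^{s}(G_n\mathbb{I}_{(-\infty,x]})}{\eta_n^{s}(G_n)} = F_{\overline{\eta}_n^{s}}(x),
$$
which is the claim.

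I do not anticipate a genuine obstacle: the only points requiring a word of care are the strict positivity of the limiting denominator $\eta_n^{s}(G_n)$ (guaranteed by the model assumptions under which $\overline{\eta}_n^s$ and its CDF are defined) and the fact that Proposition~\ref{prop:lp_bound_was} is stated for \emph{all} bounded measurable test functions, so that the discontinuous integrand $\mathbb{I}_{(-\infty,x]}$ causes no difficulty. This lemma is essentially a bookkeeping step: it converts the quantitative control of weighted empirical measures into pointwise convergence of their distribution functions, in the form needed for the subsequent results on convergence of the generalized inverses and of the Wasserstein resampling operator $\check{\Phi}_n^W$.
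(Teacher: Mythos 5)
Your proof is correct and is exactly the argument the paper compresses into ``follows immediately from Proposition~\ref{prop:lp_bound_was}'': write $F_{\overline{\eta}_n^{N,s}}(x)$ as the ratio $\eta_n^{N,s}(G_n\mathbb{I}_{(-\infty,x]})/\eta_n^{N,s}(G_n)$, apply the $L^p$ bound to the bounded measurable test functions $G_n\mathbb{I}_{(-\infty,x]}$ and $G_n$, and conclude by Slutsky using the strict positivity of $\eta_n^s(G_n)$. Your explicit remarks on why the discontinuity of the indicator is harmless and why the limiting denominator is nonzero are exactly the right points of care.
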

\begin{proof}
Follows immediately from Proposition \ref{prop:lp_bound_was}.
\end{proof}
\begin{lem}\label{lem:cdf_inv_conv_was}
For any $n\geq 0$, $s\in\{f,c\}$, $x\in[0,1]$
$$
F_{\overline{\eta}_n^{N,s}}^{-1}(x) \rightarrow_{\mathbb{P}} F_{\overline{\eta}_n^{s}}^{-1}(x).
$$
\end{lem}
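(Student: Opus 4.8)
The plan is to derive the convergence of the generalized inverses directly from the pointwise convergence in probability of the cumulative distribution functions established in Lemma~\ref{lem:cdf_conv_was}, using only the elementary equivalences relating a CDF to its generalized inverse together with the continuity and strict monotonicity of the limiting CDF $F_{\overline{\eta}_n^{s}}$. Both of those properties are supplied by the standing assumption of Section~\ref{sec:algo} that $F_{\overline{\eta}_n^{s}}$ and $F_{\overline{\eta}_n^{s}}^{-1}$ are continuous: continuity of $F_{\overline{\eta}_n^{s}}$ yields $F_{\overline{\eta}_n^{s}}\big(F_{\overline{\eta}_n^{s}}^{-1}(x)\big)=x$ for $x\in(0,1)$, while continuity of $F_{\overline{\eta}_n^{s}}^{-1}$ on $(0,1)$ forbids flat stretches of $F_{\overline{\eta}_n^{s}}$ at levels in $(0,1)$, so that $F_{\overline{\eta}_n^{s}}$ is strictly increasing on $\{s:0<F_{\overline{\eta}_n^{s}}(s)<1\}$.

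Fix $x\in(0,1)$, write $G_N:=F_{\overline{\eta}_n^{N,s}}$, $G:=F_{\overline{\eta}_n^{s}}$, and $y:=G^{-1}(x)$, so that $G(y)=x$; by the preceding paragraph, $G(y-\epsilon)<x<G(y+\epsilon)$ for all sufficiently small $\epsilon>0$. Since each $G_N$ is right-continuous and nondecreasing, the identities $\{G_N^{-1}(x)>t\}=\{G_N(t)<x\}$ and $\{G_N^{-1}(x)\le t\}=\{G_N(t)\ge x\}$ hold, and therefore
$$
\mathbb{P}\big(|G_N^{-1}(x)-y|>\epsilon\big)\le \mathbb{P}\big(G_N(y+\epsilon)<x\big)+\mathbb{P}\big(G_N(y-\epsilon)\ge x\big).
$$
I would then bound the first term by $\mathbb{P}\big(|G_N(y+\epsilon)-G(y+\epsilon)|>G(y+\epsilon)-x\big)$ and the second by $\mathbb{P}\big(|G_N(y-\epsilon)-G(y-\epsilon)|\ge x-G(y-\epsilon)\big)$; both of these are of the form ``$G_N$ deviates from $G$ at a fixed point by at least a fixed positive amount'', hence vanish as $N\to\infty$ by Lemma~\ref{lem:cdf_conv_was} applied at the points $y\pm\epsilon$. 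As the left-hand side above is nonincreasing in $\epsilon$, the bound in fact holds for every $\epsilon>0$, which gives $G_N^{-1}(x)\rightarrow_{\mathbb{P}}y=G^{-1}(x)$.

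The endpoints $x\in\{0,1\}$ are either vacuous or immediate: in the (compact) setting used for the subsequent complexity analysis, $F_{\overline{\eta}_n^{s}}^{-1}(0)$ and $F_{\overline{\eta}_n^{s}}^{-1}(1)$ are the endpoints of the support and a one-sided version of the same squeeze applies, while in general the generalized inverse at $0$ or $1$ may be infinite and no statement is required. I do not anticipate any real obstacle; the only care needed is the bookkeeping with the generalized-inverse identities and verifying that the assumed continuity of $F_{\overline{\eta}_n^{s}}^{-1}$ indeed forces the strict inequalities $G(y-\epsilon)<x<G(y+\epsilon)$ exploited above.
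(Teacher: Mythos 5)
Your proof is correct and follows essentially the same route as the paper: both arguments reduce the statement to the pointwise convergence of the CDFs (Lemma~\ref{lem:cdf_conv_was}) via the identity $\{F_{\overline{\eta}_n^{N,s}}^{-1}(x)\leq t\}=\{F_{\overline{\eta}_n^{N,s}}(t)\geq x\}$, the paper phrasing this as convergence in distribution to a constant while you run the equivalent direct $\epsilon$-argument for convergence in probability. Your version is slightly more careful in that it makes explicit where the assumed continuity of $F_{\overline{\eta}_n^{s}}^{-1}$ is needed (to rule out a flat stretch of $F_{\overline{\eta}_n^{s}}$ at level $x$, giving the strict inequality $x<F_{\overline{\eta}_n^{s}}(y+\epsilon)$), a point the paper's proof uses only implicitly.
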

\begin{proof}
We will show that $F_{\overline{\eta}_n^{N,s}}^{-1}(x)\Rightarrow F_{\overline{\eta}_n^{s}}^{-1}(x)$, which will allow us to conclude.
Let $x\in[0,1]$, $t\in\mathsf{X}$ be fixed, then we have
$$
\mathbb{P}(F_{\overline{\eta}_n^{N,s}}^{-1}(x)\leq t) = \mathbb{P}(F_{\overline{\eta}_n^{N,s}}(t)\geq x) = 1- \mathbb{P}(F_{\overline{\eta}_n^{N,s}}(t)\leq x).
$$
If $t$ is such that $F_{\overline{\eta}_n^{s}}(t)>x$ then by Lemma \ref{lem:cdf_conv_was}
$$
\lim_{N\rightarrow\infty}\mathbb{P}(F_{\overline{\eta}_n^{N,s}}^{-1}(x)\leq t) = 1
$$
and, respectively, if $t$ is such that $F_{\overline{\eta}_n^{s}}(t)<x$, then by Lemma~\ref{lem:cdf_conv_was}
$$
\lim_{N\rightarrow\infty}\mathbb{P}(F_{\overline{\eta}_n^{N,s}}^{-1}(x)\leq t) = 0.
$$
Hence, $F_{\overline{\eta}_n^{N,s}}^{-1}(x)\Rightarrow F_{\overline{\eta}_n^{s}}^{-1}(x)$, which completes the proof.
\end{proof}
\begin{lem}\label{lem:cond_exp_was}
Suppose that $\check{M}_n$ is Feller for every $n\geq 1$. Then for any $n\geq 1$, $\varphi\in\mathcal{C}_b(\mathsf{X}\times\mathsf{X})$:
$$
\lim_{N\rightarrow\infty}\mathbb{E}[\check{\Phi}_n^W(\overline{\eta}_{n-1}^{N,f},\overline{\eta}_{n-1}^{N,c})(\varphi)] =  \check{\Phi}_n^W(\overline{\eta}_{n-1}^{f},\overline{\eta}_{n-1}^{c})(\varphi).
$$
\end{lem}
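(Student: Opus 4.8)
The plan is to first rewrite the operator in a form that isolates the dependence on the (inverse) CDFs. By the very definition of $\check{\Phi}_n^W$,
\[
\check{\Phi}_n^W(\mu,\nu)(\varphi) = \int_0^1 \check{M}_n(\varphi)\bigl(F_\mu^{-1}(w),F_\nu^{-1}(w)\bigr)\,dw ,
\]
so, setting $\psi := \check{M}_n(\varphi)$, the statement to prove becomes
\[
\lim_{N\to\infty}\mathbb{E}\Bigl[\int_0^1 \psi\bigl(F_{\overline{\eta}_{n-1}^{N,f}}^{-1}(w),F_{\overline{\eta}_{n-1}^{N,c}}^{-1}(w)\bigr)\,dw\Bigr]
= \int_0^1 \psi\bigl(F_{\overline{\eta}_{n-1}^{f}}^{-1}(w),F_{\overline{\eta}_{n-1}^{c}}^{-1}(w)\bigr)\,dw .
\]
The Feller hypothesis on $\check{M}_n$ is used exactly here: it guarantees $\psi\in\mathcal{C}_b(\mathsf{X}\times\mathsf{X})$, so $\psi$ is both continuous (needed below for a continuous-mapping argument) and bounded with $\|\psi\|\leq\|\varphi\|<\infty$ (needed for the interchange of limits).

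Next I would carry out a double interchange of limit and integration. Since $\psi$ is bounded and $(\omega,w)\mapsto\psi(F_{\overline{\eta}_{n-1}^{N,f}}^{-1}(w),F_{\overline{\eta}_{n-1}^{N,c}}^{-1}(w))$ is jointly measurable (each $w\mapsto F_{\overline{\eta}_{n-1}^{N,s}}^{-1}(w)$ is a random non-decreasing step function), Fubini's theorem lets me swap $\mathbb{E}$ with $\int_0^1dw$. The problem then reduces to showing, for each fixed $w\in[0,1]$,
\[
\mathbb{E}\bigl[\psi\bigl(F_{\overline{\eta}_{n-1}^{N,f}}^{-1}(w),F_{\overline{\eta}_{n-1}^{N,c}}^{-1}(w)\bigr)\bigr]\xrightarrow[N\to\infty]{}\psi\bigl(F_{\overline{\eta}_{n-1}^{f}}^{-1}(w),F_{\overline{\eta}_{n-1}^{c}}^{-1}(w)\bigr),
\]
after which the outer $dw$-limit can be passed inside by dominated convergence on the probability space $[0,1]$, with dominating constant $\|\varphi\|$.

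For the fixed-$w$ claim, Lemma~\ref{lem:cdf_inv_conv_was} gives $F_{\overline{\eta}_{n-1}^{N,s}}^{-1}(w)\rightarrow_{\mathbb{P}}F_{\overline{\eta}_{n-1}^{s}}^{-1}(w)$ for $s\in\{f,c\}$; since both limits are deterministic constants, componentwise convergence in probability upgrades to joint convergence in probability of the pair to $\bigl(F_{\overline{\eta}_{n-1}^{f}}^{-1}(w),F_{\overline{\eta}_{n-1}^{c}}^{-1}(w)\bigr)$. Applying the continuous mapping theorem with the continuous function $\psi$ yields $\psi(\cdot^N)\rightarrow_{\mathbb{P}}\psi(\cdot)$, and because $\psi$ is bounded, convergence in probability implies convergence in $L^1$, hence of the expectations — this is the same mechanism used in the proof of Lemma~\ref{lem:ind1} (via \cite[Theorem~25.12]{bill}).

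The step I expect to require the most care is not any single estimate but the bookkeeping around the two interchanges (Fubini for the $\mathbb{E}$/$\int_0^1$ swap, and dominated convergence for the $\lim_N$/$\int_0^1$ swap), together with checking that the generalized inverses involved are well defined — the CDF and its generalized inverse for $\overline{\eta}_{n-1}^{s}$ exist and are continuous by the standing assumption in Section~\ref{sec:algo}, and for the empirical measure the generalized inverse of a step CDF is automatically defined. Everything hinges on boundedness and continuity of $\check{M}_n(\varphi)$, which is precisely what Feller delivers; the rest is routine.
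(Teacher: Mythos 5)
Your proof is correct and follows essentially the same route as the paper: rewrite $\check{\Phi}_n^W$ via the inverse CDFs, apply Lemma~\ref{lem:cdf_inv_conv_was} together with the Feller property to get pointwise (in $w$) convergence in probability of $\check{M}_n(\varphi)(F_{\overline{\eta}_{n-1}^{N,f}}^{-1}(w),F_{\overline{\eta}_{n-1}^{N,c}}^{-1}(w))$, and conclude by boundedness (the paper cites \cite[Theorem~25.12]{bill} for this last step). Your explicit Fubini and dominated-convergence bookkeeping simply spells out the interchanges that the paper leaves implicit.
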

\begin{proof}
$$
\check{\Phi}_n^W(\overline{\eta}_{n-1}^{N,f},\overline{\eta}_{n-1}^{N,c})(\varphi) =  \int_{0}^1 \check{M}_n(\varphi)(F_{\overline{\eta}_{n-1}^{N,f}}^{-1}(w),F_{\overline{\eta}_{n-1}^{N,c}}^{-1}(w)) dw.
$$
By Lemma \ref{lem:cdf_inv_conv_was} and the Feller property of $\check{M}_n$
$$
\check{M}_n(\varphi)(F_{\overline{\eta}_{n-1}^{N,f}}^{-1}(w),F_{\overline{\eta}_{n-1}^{N,c}}^{-1}(w)) \rightarrow_{\mathbb{P}} \check{M}_n(\varphi)(F_{\overline{\eta}_{n-1}^{f}}^{-1}(w),F_{\overline{\eta}_{n-1}^{c}}^{-1}(w))
$$
and hence the proof is completed via \cite[Theorem 25.12]{bill}.
\end{proof}
\begin{lem}\label{lem:was2}
Suppose that $\check{M}_n$ is Feller for every $n\geq 1$. Then for any $p\geq 0$, $\varphi,\psi\in\mathcal{C}_b(\mathsf{X})$, 
$$
\lim_{N\rightarrow+\infty}\mathbb{E}[V_p^{N,f}(\varphi)V_p^{N,c}(\psi)] = \check{\eta}_p^W(\varphi\otimes \psi) - \eta_p^f(\varphi)\eta_p^c(\psi).
$$
\end{lem}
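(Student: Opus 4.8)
The plan is to condition on the particle history and exploit the fact that, at each time step, the coupled particles are conditionally i.i.d.; the argument mirrors that of Lemma~\ref{lem:ind1}, now carried out with a fine test function against a coarse one. Write $\mathcal{F}_{p-1}^N$ for the $\sigma$-field generated by $\{u_q^i : 0\le q\le p-1,\ 1\le i\le N\}$, and first treat $p\ge 1$. Conditionally on $\mathcal{F}_{p-1}^N$ the pairs $u_p^i=(x_p^{i,f},x_p^{i,c})$, $i=1,\dots,N$, are i.i.d.\ with law $\check\Phi_p^W(\overline\eta_{p-1}^{N,f},\overline\eta_{p-1}^{N,c})$; using the marginal constraints on $\check M_p$ together with the identity $F_\mu^{-1}(U)\sim\mu$ for $U\sim\mathcal{U}_{[0,1]}$, the conditional first marginal of $\varphi(x_p^{i,f})$ is $\overline\eta_{p-1}^{N,f}(M_p^f(\varphi))=\Phi_p^f(\eta_{p-1}^{N,f})(\varphi)$, and likewise for the coarse coordinate. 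Hence $V_p^{N,f}(\varphi)=N^{-1/2}\sum_i\bar\varphi_p^{\,i}$ and $V_p^{N,c}(\psi)=N^{-1/2}\sum_j\bar\psi_p^{\,j}$, where $\bar\varphi_p^{\,i}:=\varphi(x_p^{i,f})-\Phi_p^f(\eta_{p-1}^{N,f})(\varphi)$ and $\bar\psi_p^{\,j}:=\psi(x_p^{j,c})-\Phi_p^c(\eta_{p-1}^{N,c})(\psi)$ are both $\mathcal{F}_{p-1}^N$-conditionally centred.

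Next I would multiply out the product and take the conditional expectation: since the summands are conditionally i.i.d.\ and centred, every off-diagonal term $i\ne j$ vanishes, leaving
\begin{equation*}
\mathbb{E}\big[V_p^{N,f}(\varphi)V_p^{N,c}(\psi)\mid\mathcal{F}_{p-1}^N\big]
=\check\Phi_p^W(\overline\eta_{p-1}^{N,f},\overline\eta_{p-1}^{N,c})(\varphi\otimes\psi)-\Phi_p^f(\eta_{p-1}^{N,f})(\varphi)\,\Phi_p^c(\eta_{p-1}^{N,c})(\psi).
\end{equation*}
Taking expectations and letting $N\to\infty$, the first term converges to $\check\Phi_p^W(\overline\eta_{p-1}^{f},\overline\eta_{p-1}^{c})(\varphi\otimes\psi)=\check\eta_p^W(\varphi\otimes\psi)$ by Lemma~\ref{lem:cond_exp_was} applied to $\varphi\otimes\psi\in\mathcal{C}_b(\mathsf{X}\times\mathsf{X})$ (this is where the Feller hypothesis on $\check M_p$ enters), while for the second term $\Phi_p^f(\eta_{p-1}^{N,f})(\varphi)$ and $\Phi_p^c(\eta_{p-1}^{N,c})(\psi)$ are uniformly bounded and, by Proposition~\ref{prop:lp_bound_was}, converge in probability to $\Phi_p^f(\eta_{p-1}^{f})(\varphi)=\eta_p^f(\varphi)$ and $\Phi_p^c(\eta_{p-1}^{c})(\psi)=\eta_p^c(\psi)$; their product is then bounded and converges in probability to $\eta_p^f(\varphi)\eta_p^c(\psi)$, so \cite[Theorem 25.12]{bill} gives convergence of its expectation. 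Combining the two limits yields the claim for $p\ge 1$. For $p=0$ the same computation applies verbatim, except that the pairs $u_0^i$ are unconditionally i.i.d.\ from $\check\eta_0^W$ and the centring constants $\eta_0^f(\varphi),\eta_0^c(\psi)$ are deterministic, so the cross-term cancellation already produces the exact identity $\mathbb{E}[V_0^{N,f}(\varphi)V_0^{N,c}(\psi)]=\check\eta_0^W(\varphi\otimes\psi)-\eta_0^f(\varphi)\eta_0^c(\psi)$ for every $N$.

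I do not anticipate a genuine obstacle: the only non-elementary analytic input is interchanging the limit and expectation in the coupled-resampling term, which is precisely the content of Lemma~\ref{lem:cond_exp_was} (itself resting on the a.s.\ convergence of the empirical inverse CDFs in Lemma~\ref{lem:cdf_inv_conv_was} and the Feller/continuity properties). The one step I would write out with some care is the verification that $\mathbb{E}[\varphi(x_p^{i,f})\psi(x_p^{i,c})\mid\mathcal{F}_{p-1}^N]=\check\Phi_p^W(\overline\eta_{p-1}^{N,f},\overline\eta_{p-1}^{N,c})(\varphi\otimes\psi)$ and that its marginal reductions reproduce exactly the $\Phi_p^s(\eta_{p-1}^{N,s})$ terms appearing in the definition of $V_p^{N,s}$ — this is what simultaneously kills the off-diagonal terms and makes the diagonal term collapse onto $\check\Phi_p^W$.
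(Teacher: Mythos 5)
Your proposal is correct and follows essentially the same route as the paper: both arguments reduce $\mathbb{E}[V_p^{N,f}(\varphi)V_p^{N,c}(\psi)]$ to $\mathbb{E}[\check{\Phi}_p^W(\overline{\eta}_{p-1}^{N,f},\overline{\eta}_{p-1}^{N,c})(\varphi\otimes\psi)] - \mathbb{E}[\Phi_p^f(\eta_{p-1}^{N,f})(\varphi)\Phi_p^c(\eta_{p-1}^{N,c})(\psi)]$ by exploiting the conditional i.i.d.\ structure of the pairs (the paper does this by expanding the double sum so that the $N-1$ off-diagonal terms produce the product of conditional means, which is the same cancellation you obtain by conditioning on $\mathcal{F}_{p-1}^N$), and then passes to the limit via Lemma~\ref{lem:cond_exp_was}, Proposition~\ref{prop:lp_bound_was} and bounded convergence. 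Your explicit treatment of the conditional marginals and of the $p=0$ case only makes the paper's implicit steps more transparent.
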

\begin{proof}
We have
\begin{eqnarray*}
\mathbb{E}[V_p^{N,f}(\varphi)V_p^{N,c}(\psi)] & = & N\Big(\mathbb{E}[\eta_p^{N,f}(\varphi)\eta_p^{N,c}(\psi)]-\mathbb{E}[\eta_p^{N,f}(\varphi)\Phi_p^c(\eta_{p-1}^{N,c})(\psi)]- \\ & & 
\mathbb{E}[\Phi_p^f(\eta_{p-1}^{N,f})(\varphi)\eta_p^{N,c}(\psi)] + \mathbb{E}[\Phi_p^f(\eta_{p-1}^{N,f})(\varphi)\Phi_p^c(\eta_{p-1}^{N,c})(\psi)]
\Big) \\
& = &  N\Big(\mathbb{E}[\eta_p^{N,f}(\varphi)\eta_p^{N,c}(\psi)] - \mathbb{E}[\Phi_p^f(\eta_{p-1}^{N,f})(\varphi)\Phi_p^c(\eta_{p-1}^{N,c})(\psi)]
\Big) .
\end{eqnarray*}
Now
\begin{eqnarray*}
N\mathbb{E}[\eta_p^{N,f}(\varphi)\eta_p^{N,c}(\psi)] & = & \frac{1}{N}\sum_{i=1}^N\sum_{j=1}^N\mathbb{E}[\varphi(X_p^{i,f})\psi(X_p^{i,c})] \\
& = & \mathbb{E}[\check{\Phi}_p^W(\overline{\eta}_{n-1}^{N,f},\overline{\eta}_{n-1}^{N,c})(\varphi\otimes\psi)] + (N-1)\mathbb{E}[\Phi_p^f(\eta_{p-1}^{N,f})(\varphi)\Phi_p^c(\eta_{p-1}^{N,c})(\psi)].
\end{eqnarray*}
Thus,
$$
\mathbb{E}[V_p^{N,f}(\varphi)V_p^{N,c}(\psi)] = \mathbb{E}[\check{\Phi}_p^W(\overline{\eta}_{n-1}^{N,f},\overline{\eta}_{n-1}^{N,c})(\varphi\otimes\psi)] - \mathbb{E}[\Phi_p^f(\eta_{p-1}^{N,f})(\varphi)\Phi_p^c(\eta_{p-1}^{N,c})(\psi)].
$$
The result now follows by Lemma \ref{lem:cond_exp_was} and  Proposition \ref{prop:lp_bound_was} with \cite[Theorem 25.12]{bill}.
\end{proof}

Define for $p\geq 0$, $\varphi,\psi\in\mathcal{B}_b(\mathsf{X})$.
$$
V_p^N(\varphi,\psi) = V_p^{N,f}(\varphi) - V_p^{N,c}(\psi).
$$
\begin{prop}\label{prop:field_ind}
Let $n\geq 0$, then for any $(\varphi_0,\dots,\varphi_n) \in\mathcal{C}_b(\mathsf{X})^{n+1}$, $(\psi_0,\dots,\psi_n) \in\mathcal{C}_b(\mathsf{X})^{n+1}$, 
 $(V_0^N(\varphi_0,\psi_0),\dots,$ $V_n^N(\varphi_n,\psi_n))$ converges in distribution to a $(n+1)-$dimensional Gaussian random variable with zero mean and
diagonal covariance matrix, with the $p\in\{0,\dots,n\}$ diagonal entry
$$
\check{\eta}_p^W(\{(\varphi_p\otimes 1- 1\otimes \psi_p) - \check{\eta}_p^W(\varphi_p\otimes 1- 1\otimes \psi_p)\}^2).
$$
\end{prop}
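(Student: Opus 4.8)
The plan is to prove the joint convergence by the now-standard conditional characteristic-function argument for particle filters (see~\cite{fk,mlsmc,ddj2012,jasra_yu}), running an induction on $n$. Let $\mathcal{F}_p^N$ be the $\sigma$-algebra generated by the particle system $(u_0^{1:N},\dots,u_p^{1:N})$. By construction of the algorithm, conditionally on $\mathcal{F}_{p-1}^N$ the pairs $u_p^{1},\dots,u_p^{N}$ are i.i.d.\ with law $\check{\Phi}_p^W(\overline{\eta}_{p-1}^{N,f},\overline{\eta}_{p-1}^{N,c})$, whose $f$- and $c$-marginals are $\Phi_p^f(\eta_{p-1}^{N,f})$ and $\Phi_p^c(\eta_{p-1}^{N,c})$ respectively (for $p=0$ they are i.i.d.\ $\check{\eta}_0$). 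Hence $V_p^N(\varphi_p,\psi_p)=N^{-1/2}\sum_{i=1}^N Z_p^{i,N}$ with
$$
Z_p^{i,N} = \big(\varphi_p(x_p^{i,f})-\Phi_p^f(\eta_{p-1}^{N,f})(\varphi_p)\big) - \big(\psi_p(x_p^{i,c})-\Phi_p^c(\eta_{p-1}^{N,c})(\psi_p)\big),
$$
a triangular array that, given $\mathcal{F}_{p-1}^N$, is i.i.d., centred, and bounded by a deterministic constant (since $\varphi_p,\psi_p\in\mathcal{C}_b(\mathsf{X})$).

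The first ingredient is a conditional CLT for a single block. Writing $\chi_p^N(t)=\mathbb{E}[\exp(itV_p^N(\varphi_p,\psi_p))\mid\mathcal{F}_{p-1}^N]=\big(\mathbb{E}[\exp(itN^{-1/2}Z_p^{1,N})\mid\mathcal{F}_{p-1}^N]\big)^N$, a Taylor expansion together with the uniform boundedness of $Z_p^{1,N}$ and $\mathbb{E}[Z_p^{1,N}\mid\mathcal{F}_{p-1}^N]=0$ gives $\chi_p^N(t)=\exp(-\tfrac12 t^2 s_p^N)+o_{\mathbb{P}}(1)$, where the conditional variance is
$$
s_p^N=\check{\Phi}_p^W(\overline{\eta}_{p-1}^{N,f},\overline{\eta}_{p-1}^{N,c})\big((\varphi_p\otimes 1-1\otimes\psi_p)^2\big)-\big(\Phi_p^f(\eta_{p-1}^{N,f})(\varphi_p)-\Phi_p^c(\eta_{p-1}^{N,c})(\psi_p)\big)^2.
$$
I would then show that $s_p^N\rightarrow_{\mathbb{P}}\sigma_p^2$, where $\sigma_p^2=\check{\eta}_p^W(\{(\varphi_p\otimes 1-1\otimes\psi_p)-\check{\eta}_p^W(\varphi_p\otimes 1-1\otimes\psi_p)\}^2)$ is the variance in the statement. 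For the first term one argues exactly as in the proof of Lemma~\ref{lem:cond_exp_was}: since $(\varphi_p\otimes 1-1\otimes\psi_p)^2\in\mathcal{C}_b(\mathsf{X}\times\mathsf{X})$, Lemma~\ref{lem:cdf_inv_conv_was} and the Feller property of $\check{M}_p$ give, for each $w$, convergence in probability of $\check{M}_p((\varphi_p\otimes 1-1\otimes\psi_p)^2)(F_{\overline{\eta}_{p-1}^{N,f}}^{-1}(w),F_{\overline{\eta}_{p-1}^{N,c}}^{-1}(w))$, and a further bounded-convergence/Fubini argument (cf.~\cite[Theorem 25.12]{bill}) upgrades this to convergence in probability of the $dw$-integral to $\check{\eta}_p^W((\varphi_p\otimes 1-1\otimes\psi_p)^2)$; for the second term, Proposition~\ref{prop:lp_bound_was} gives $\Phi_p^s(\eta_{p-1}^{N,s})(\cdot)\rightarrow_{\mathbb{P}}\eta_p^s(\cdot)$, so the subtracted square converges to $(\eta_p^f(\varphi_p)-\eta_p^c(\psi_p))^2=(\check{\eta}_p^W(\varphi_p\otimes 1-1\otimes\psi_p))^2$. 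Subtracting yields $\sigma_p^2$; this is consistent with the unconditional statements in Lemmata~\ref{lem:ind1} and~\ref{lem:was2}. Consequently $\chi_p^N(t)\rightarrow_{\mathbb{P}}\exp(-\tfrac12 t^2\sigma_p^2)$.

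The second ingredient is the induction itself. The base case $n=0$ is the ordinary i.i.d.\ CLT applied to $Z_0^{i,N}$. For the step, assume the claim at $n-1$ and condition on $\mathcal{F}_{n-1}^N$ to factor
$$
\mathbb{E}\left[\exp\left(i\sum_{p=0}^n t_p V_p^N(\varphi_p,\psi_p)\right)\right]=\mathbb{E}\left[\exp\left(i\sum_{p=0}^{n-1} t_p V_p^N(\varphi_p,\psi_p)\right)\chi_n^N(t_n)\right].
$$
Since the first factor is bounded by $1$ and $\chi_n^N(t_n)\rightarrow_{\mathbb{P}}\exp(-\tfrac12 t_n^2\sigma_n^2)$, dominated convergence lets us replace $\chi_n^N(t_n)$ by that deterministic constant, and the induction hypothesis identifies the limit of the remaining expectation as $\exp(-\tfrac12\sum_{p=0}^{n-1}t_p^2\sigma_p^2)$. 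Thus the joint characteristic function converges to $\exp(-\tfrac12\sum_{p=0}^{n}t_p^2\sigma_p^2)$, the characteristic function of a zero-mean Gaussian whose covariance is diagonal with $p$-th entry $\sigma_p^2$ — the off-diagonal entries vanish precisely because each $\chi_p^N$ has a deterministic limit. The one step that requires genuine care, and which I expect to be the main obstacle, is the convergence $s_p^N\rightarrow_{\mathbb{P}}\sigma_p^2$: it is here that the new resampling mechanism enters, and one must combine Lemma~\ref{lem:cdf_inv_conv_was}, the Feller property of $\check{M}_p$, and the bounded-convergence/Fubini argument to pass the limit through the generalised-inverse-CDF coupling; the rest is routine bookkeeping with Proposition~\ref{prop:lp_bound_was}.
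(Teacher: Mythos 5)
Your proposal is correct and follows essentially the same route as the paper, which simply defers to the conditional characteristic-function/martingale-array argument of Del Moral (Theorem 9.3.1 and Corollary 9.3.1 of \cite{fk}) combined with Lemmata~\ref{lem:ind1} and~\ref{lem:was2} to identify the limiting variances; your identification of the conditional variance $s_p^N$ and its limit via Lemma~\ref{lem:cdf_inv_conv_was}, the Feller property of $\check{M}_p$, and bounded convergence is exactly the content of those lemmata (and of Lemma~\ref{lem:cond_exp_was}), upgraded from convergence of expectations to convergence in probability, which is legitimate here since the integrand is uniformly bounded. The block-wise induction on the joint characteristic function, with off-diagonal terms vanishing because each conditional limit is deterministic, is the standard mechanism the paper invokes by reference.
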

\begin{proof}
This follows by using almost the same exposition and proofs as \cite{fk}, pp.~293-294, Theorem 9.3.1 and Corollary 9.3.1 and the results (of this paper) Lemma \ref{lem:ind1}
and Lemma \ref{lem:was2}. The proof is thus omitted.
\end{proof}

\section{Proofs for the Asympotic Variance}\label{app:av_prfs}

\begin{lem}\label{lem:av1}
Assume (A\ref{hyp:1}-\ref{hyp:2}). Then for any $n\geq 1$, $0\leq p < n$, $s\in\{f,c\}$, $\varphi\in \textrm{\emph{Lip}}(\mathsf{X})\cap\mathcal{B}_b(\mathsf{X})$,
$Q_{p,n}^s(\varphi)\in \textrm{\emph{Lip}}(\mathsf{X})\cap\mathcal{B}_b(\mathsf{X})$.
\end{lem}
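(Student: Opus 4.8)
The plan is a backward induction on $p$, from $p=n-1$ down to $p=0$, based on the one–step identity
$$
Q_{p,n}^s(\varphi)(x) \;=\; G_p(x)\, M_{p+1}^s\big(Q_{p+1,n}^s(\varphi)\big)(x),
$$
which follows immediately from the definitions $Q_{p+1}^s(x,dy)=G_p(x)M_{p+1}^s(x,dy)$ and of the iterated kernel $Q_{p,n}^s$. The only auxiliary fact needed is that a pointwise product of two bounded Lipschitz functions is bounded Lipschitz: if $f,g\in\textrm{Lip}(\mathsf{X})\cap\mathcal{B}_b(\mathsf{X})$ then $\|fg\|\leq\|f\|\,\|g\|$ and
$$
|f(x)g(x)-f(y)g(y)| \leq \|f\|\,|g(x)-g(y)| + \|g\|\,|f(x)-f(y)|,
$$
so $fg$ is Lipschitz with constant at most $\|f\|\,\mathrm{Lip}(g)+\|g\|\,\mathrm{Lip}(f)$.

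For the base case $p=n-1$ we have $Q_{n-1,n}^s(\varphi)=G_{n-1}\cdot M_n^s(\varphi)$. Since $\varphi\in\textrm{Lip}(\mathsf{X})\cap\mathcal{B}_b(\mathsf{X})$, assumption (A\ref{hyp:2}) gives $M_n^s(\varphi)\in\textrm{Lip}(\mathsf{X})$, while $\|M_n^s(\varphi)\|\leq\|\varphi\|<\infty$ because $M_n^s$ is a Markov kernel; and $G_{n-1}\in\textrm{Lip}(\mathsf{X})\cap\mathcal{B}_b(\mathsf{X})$ by (A\ref{hyp:1}). The product fact then yields $Q_{n-1,n}^s(\varphi)\in\textrm{Lip}(\mathsf{X})\cap\mathcal{B}_b(\mathsf{X})$. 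For the inductive step, assume $Q_{p+1,n}^s(\varphi)\in\textrm{Lip}(\mathsf{X})\cap\mathcal{B}_b(\mathsf{X})$; applying (A\ref{hyp:2}) with this function in place of $\varphi$ shows $M_{p+1}^s\big(Q_{p+1,n}^s(\varphi)\big)\in\textrm{Lip}(\mathsf{X})\cap\mathcal{B}_b(\mathsf{X})$, and multiplying by $G_p\in\textrm{Lip}(\mathsf{X})\cap\mathcal{B}_b(\mathsf{X})$ (by (A\ref{hyp:1})) and invoking the product fact gives $Q_{p,n}^s(\varphi)\in\textrm{Lip}(\mathsf{X})\cap\mathcal{B}_b(\mathsf{X})$, completing the induction.

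There is no real obstacle here; the one point requiring (minor) care is that (A\ref{hyp:2}) is applicable at \emph{every} stage of the recursion, which is exactly what the induction hypothesis together with the product fact guarantees — the function pushed through $M_{p+1}^s$ is always again bounded and Lipschitz. If explicit constants are wanted they accumulate multiplicatively over the $n-p$ steps, picking up at each stage the bounds $\|G_q\|$ and the Lipschitz constants of $G_q$ (from (A\ref{hyp:1})) and of $M_q^s$ (the constant $C$ in (A\ref{hyp:2})); none of these depend on $N$, which is all that is needed downstream.
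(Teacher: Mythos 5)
Your proof is correct and follows essentially the same route as the paper: a backward induction from $p=n-1$, with the Lipschitz property of the product $G_p\cdot M_{p+1}^s(Q_{p+1,n}^s(\varphi))$ handled by the standard decomposition $f(x)g(x)-f(y)g(y)=[f(x)-f(y)]g(x)+f(y)[g(x)-g(y)]$ together with (A1) and (A2). The paper writes out exactly this telescoping of the product rather than quoting it as a separate "product of bounded Lipschitz functions" fact, but the argument is the same.
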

\begin{proof}
The boundedness is clear, so we concentrate on the Lipschitz property.
The proof is by induction, starting with the case $p=n-1$. Throughout, $C$ is a constant whose value may change from line-to-line and may depend on $n,p,\varphi,G_n$, but
critically does not depend on $s\in\{f,c\}$. We have for any $(x,y)\in\mathsf{X}^2$
$$
|Q_{n}^s(\varphi)(x) - Q_{n}^s(\varphi)(y)| = |[G_{n-1}(x) - G_{n-1}(y)]M_{n}^s(\varphi)(x) + G_{n-1}(y)[M_{n}^s(\varphi)(x) -M_{n}^s(\varphi)(y) ]|.
$$
Applying the triangular inequality with (A\ref{hyp:1}) for the left term on the R.H.S.~and (A\ref{hyp:2}) for the other term yields
$$
|Q_{n}^s(\varphi)(x) - Q_{n}^s(\varphi)(y)| \leq C|x-y|.
$$
We assume the result for a given $p+1$ and consider $p$. Then for any $(x,y)\in\mathsf{X}^2$
\begin{eqnarray*}
|Q_{p,n}^s(\varphi)(x) - Q_{p,n}^s(\varphi)(y)| & = & |[G_{p}(x) - G_{p}(y)]M_{p+1}^s(Q_{p+1,n}^s(\varphi))(x) + \\ & & G_{p}(y)[M_{p+1}^s(Q_{p+1,n}^s(\varphi))(x) -M_{p+1}^s(Q_{p+1,n}^s(\varphi))(y) ]|.
\end{eqnarray*}
Then 
\begin{eqnarray*}
|Q_{p,n}^s(\varphi)(x) - Q_{p,n}^s(\varphi)(y)| & \leq & |G_{p}(x) - G_{p}(y)|M_{p+1}^s(Q_{p+1,n}^s(\varphi))(x) + \\ & &  \|G_p\||M_{p+1}^s(Q_{p+1,n}^s(\varphi))(x) -M_{p+1}^s(Q_{p+1,n}^s(\varphi))(y)|.
\end{eqnarray*}
Clearly by (A\ref{hyp:1})
$$
|G_{p}(x) - G_{p}(y)|M_{p+1}^s(Q_{p+1,n}^s(\varphi))(x) \leq C|x-y|.
$$
By the induction hypothesis and (A\ref{hyp:2})
$$
\|G_p\||M_{p+1}^s(Q_{p+1,n}^s(\varphi))(x) -M_{n}^s(Q_{p+1,n}^s(\varphi))(y)|
\leq C|x-y|
$$ 
and so one can easily conclude the proof from here.
\end{proof}
\begin{lem}\label{lem:av2}
Assume (A\ref{hyp:1}-\ref{hyp:2}). Then for any $n\geq 1$, $0\leq p < n$, $\varphi\in \textrm{\emph{Lip}}(\mathsf{X})\cap\mathcal{B}_b(\mathsf{X})$
there exists a $C<+\infty$ such that for any $x\in\mathsf{X}$
$$
|Q_{p,n}^f(\varphi)(x)-Q_{p,n}^c(\varphi)(x)| \leq C |\|M_n^{f,c}\||.
$$
\end{lem}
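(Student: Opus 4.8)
The plan is to reduce the bound to a telescoping sum over the $n-p$ non-negative kernels composing $Q_{p,n}^s$, and then control, term by term, the deviation of a single kernel $Q_{k+1}^f$ from $Q_{k+1}^c$, using Lemma~\ref{lem:av1} to guarantee that the functions to which these kernels are applied remain in $\textrm{Lip}(\mathsf{X})\cap\mathcal{B}_b(\mathsf{X})$. Concretely, I would first write the standard decomposition $A_1\cdots A_m - B_1\cdots B_m = \sum_i A_1\cdots A_{i-1}(A_i-B_i)B_{i+1}\cdots B_m$ applied to the operators $Q_{p+1}^s,\dots,Q_n^s$, which gives
$$
Q_{p,n}^f(\varphi)(x) - Q_{p,n}^c(\varphi)(x) = \sum_{k=p}^{n-1} Q_{p,k}^f\Big[(Q_{k+1}^f - Q_{k+1}^c)\big(Q_{k+1,n}^c(\varphi)\big)\Big](x),
$$
with the conventions $Q_{p,p}^f(g)=g$ and $Q_{n,n}^c(\varphi)=\varphi$.

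Next I would estimate each summand. Writing $\psi_k := Q_{k+1,n}^c(\varphi)$ and recalling $Q_{k+1}^s(x,dy)=G_k(x)M_{k+1}^s(x,dy)$, the inner difference is $G_k(x)\,[M_{k+1}^f(\psi_k)(x)-M_{k+1}^c(\psi_k)(x)]$. By Lemma~\ref{lem:av1} (and, in the boundary case $k=n-1$, directly by the hypothesis on $\varphi$) we have $\psi_k\in\textrm{Lip}(\mathsf{X})\cap\mathcal{B}_b(\mathsf{X})$ with $\|\psi_k\|\leq \|\varphi\|\prod_{q=k+1}^{n-1}\|G_q\|=:c_k<\infty$, a bound not depending on the $f/c$ label. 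If $c_k>0$, then $\psi_k/c_k$ belongs to the class $\mathcal{A}$ appearing in the definition of $|\|M_{k+1}^f-M_{k+1}^c\||$, so by homogeneity $|M_{k+1}^f(\psi_k)(x)-M_{k+1}^c(\psi_k)(x)|\leq c_k\,|\|M_{k+1}^f-M_{k+1}^c\||\leq c_k\,|\|M_n^{f,c}\||$, the case $c_k=0$ being trivial; since $G_k$ is bounded this yields a bound $C_k\,|\|M_n^{f,c}\||$ on the inner difference, uniformly in $x$.

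Finally I would absorb the outer kernel: $Q_{p,k}^f$ is a non-negative kernel with $Q_{p,k}^f(1)(x)\leq\prod_{q=p}^{k-1}\|G_q\|<\infty$, hence $|Q_{p,k}^f(g)(x)|\leq\|g\|\,Q_{p,k}^f(1)(x)$ for any $g\in\mathcal{B}_b(\mathsf{X})$; applying this with $g$ equal to the inner difference and summing over $k\in\{p,\dots,n-1\}$ gives $|Q_{p,n}^f(\varphi)(x)-Q_{p,n}^c(\varphi)(x)|\leq C\,|\|M_n^{f,c}\||$, where $C$ is a finite constant depending on $n,p,\varphi$ and the functions $G_q$ but not on $x$. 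There is no genuine obstacle here beyond bookkeeping; the only points requiring care are the treatment of the boundary term $k=n-1$ (where $\psi_k=\varphi$) and ensuring that every constant introduced is independent of the $f/c$ label so that both $|\|M_n^{f,c}\||$ and the final constant are well defined — which is precisely what Lemma~\ref{lem:av1} was set up to guarantee.
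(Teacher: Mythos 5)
Your proof is correct and follows essentially the same route as the paper's: a telescoping decomposition of $Q_{p,n}^f-Q_{p,n}^c$ into single-kernel differences, Lemma~\ref{lem:av1} to keep the inner functions in $\textrm{Lip}(\mathsf{X})\cap\mathcal{B}_b(\mathsf{X})$, normalization by the sup-norm so as to invoke the definition of $|\|M_k^f-M_k^c\||$, and absorption of the outer kernel by boundedness of the $G_q$. The only cosmetic difference is that you place the fine kernels outside and the coarse ones inside the telescoping sum, whereas the paper does the reverse; this is immaterial since Lemma~\ref{lem:av1} and all constants are uniform in the label $s\in\{f,c\}$.
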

\begin{proof}
Throughout, $C$ is a constant whose value may change from line-to-line and may depend on $n,p,\varphi,G_n$.
We have the decomposition
$$
Q_{p,n}^f(\varphi)(x)-Q_{p,n}^c(\varphi)(x) = \sum_{k=p+1}^n [Q_{p,k-1}^c(Q_{k-1,n}^f(\varphi))(x)-Q_{p,k}^c(Q_{k,n}^f(\varphi))(x)].
$$
We only consider the summand, which is
$$
[Q_{p,k-1}^c(Q_{k-1,n}^f(\varphi))(x)-Q_{p,k}^c(Q_{k,n}^f(\varphi))(x)] = Q_{p,k-1}^c([Q_k^{f}-Q_k^{c}](Q^f_{k,n}(\varphi)))(x).
$$
By Lemma \ref{lem:av1} $Q^f_{k,n}(\varphi)\in\textrm{Lip}(\mathsf{X})\cap\mathcal{B}_b(\mathsf{X})$, so
\begin{eqnarray*}
|[Q_k^{f}-Q_k^{c}](Q^f_{k,n}(\varphi))(x)| & = & |G_{k-1}(x)[M_k^f-M_k^s](Q^f_{k,n}(\varphi))(x)| \\
& = & \Big|G_{k-1}(x)\|Q^f_{k,n}(\varphi)\| [M_k^f-M_k^s]\Big(\frac{Q^f_{k,n}(\varphi)}{\|Q^f_{k,n}(\varphi)\|}\Big)(x)\Big|\\
& \leq & C|\|M_k^f-M_k^s\|| \\
& \leq & C|\|M_n^{f,c}\||.
\end{eqnarray*}
Thus, it easily follows that
$$
|Q_{p,n}^f(\varphi)(x)-Q_{p,n}^c(\varphi)(x)| \leq C |\|M_n^{f,c}\||.
$$
\end{proof}

\begin{lem}\label{lem:av3}
Assume (A\ref{hyp:1}-\ref{hyp:3}). Then for any $n\geq 1$, $0\leq p < n$, $\varphi\in \textrm{\emph{Lip}}(\mathsf{X})\cap\mathcal{B}_b(\mathsf{X})$
there exist a $C<+\infty$ such that for any $(x,y)\in\mathsf{X}\times\mathsf{X}$
$$
\Big|\frac{Q_{p,n}^f(\varphi)(x)}{\eta_p^f(Q_{p,n}^f(1))}-\frac{Q_{p,n}^c(\varphi)(y)}{\eta_p^c(Q_{p,n}^c(1))}\Big| \leq
C\Big(|x-y|+\|\eta_p^f-\eta_p^c\|_{\textrm{\emph{tv}}}+|\|M_n^{f,c}\||\Big)
$$
where $C$ does not depend on $\eta_p^f,\eta_p^c$.
\end{lem}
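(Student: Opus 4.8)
The plan is to reduce everything to the elementary identity for a difference of ratios,
$$
\frac{a}{b}-\frac{c}{d} \;=\; \frac{a-c}{b} \;+\; \frac{c\,(d-b)}{b\,d},
$$
applied with $a=Q_{p,n}^f(\varphi)(x)$, $b=\eta_p^f(Q_{p,n}^f(1))$, $c=Q_{p,n}^c(\varphi)(y)$, $d=\eta_p^c(Q_{p,n}^c(1))$, and then to bound each piece using (A\ref{hyp:1})--(A\ref{hyp:3}) together with Lemmata~\ref{lem:av1} and~\ref{lem:av2}. First I would record the deterministic bounds on the four quantities. From (A\ref{hyp:3}), $Q_{q+1}^s(1)(x)=G_q(x)\geq C$ for every $q$ and every $x$, so iterating over $q=p,\dots,n-1$ gives $Q_{p,n}^s(1)(x)\geq C^{\,n-p}>0$ uniformly in $x$ and in $s\in\{f,c\}$; hence $b,d\geq C^{\,n-p}>0$, with a lower bound that depends only on $n,p$ and the constant in (A\ref{hyp:3}), not on $\eta_p^f,\eta_p^c$. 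Since $G_q\in\mathcal{B}_b(\mathsf{X})$ by (A\ref{hyp:1}), we also have $|Q_{p,n}^s(\varphi)(z)|\leq \|\varphi\|\prod_{q=p}^{n-1}\|G_q\|=:C$, uniformly in $z$ and $s$; in particular $|c|\leq C$. Plugging these into the identity shows it is enough to bound $|a-c|$ and $|d-b|$ each by a constant multiple of the claimed right-hand side.

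For $|a-c|$ I would insert $Q_{p,n}^f(\varphi)(y)$:
$$
|a-c| \;\leq\; |Q_{p,n}^f(\varphi)(x)-Q_{p,n}^f(\varphi)(y)| \;+\; |Q_{p,n}^f(\varphi)(y)-Q_{p,n}^c(\varphi)(y)|,
$$
where the first term is $\leq C|x-y|$ because $Q_{p,n}^f(\varphi)\in\textrm{Lip}(\mathsf{X})\cap\mathcal{B}_b(\mathsf{X})$ by Lemma~\ref{lem:av1}, and the second term is $\leq C\,|\|M_n^{f,c}\||$ by Lemma~\ref{lem:av2}. For $|d-b|$ I would insert $\eta_p^c(Q_{p,n}^f(1))$:
$$
|d-b| \;\leq\; |\eta_p^c(Q_{p,n}^c(1)-Q_{p,n}^f(1))| \;+\; |[\eta_p^c-\eta_p^f](Q_{p,n}^f(1))|.
$$
The first term is at most $\sup_x|Q_{p,n}^c(1)(x)-Q_{p,n}^f(1)(x)|\leq C\,|\|M_n^{f,c}\||$ by Lemma~\ref{lem:av2} applied with $\varphi\equiv 1$ (constant functions are Lipschitz and bounded); the second term is $\leq 2\|Q_{p,n}^f(1)\|\,\|\eta_p^f-\eta_p^c\|_{\textrm{tv}}\leq C\|\eta_p^f-\eta_p^c\|_{\textrm{tv}}$, since $Q_{p,n}^f(1)$ is bounded. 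Combining these with the lower bounds $b,d\geq C^{\,n-p}$ and the bound $|c|\leq C$ in the ratio identity gives the asserted estimate, and tracking the constants shows that $C$ depends on $n,p,\varphi$ and the functions $G_q$ but not on $\eta_p^f,\eta_p^c$, as required.

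The argument is essentially routine once Lemmata~\ref{lem:av1} and~\ref{lem:av2} are available; the one point that genuinely needs care is the uniform-in-$x$-and-in-$s$ lower bound $Q_{p,n}^s(1)\geq C^{\,n-p}$ on the denominators, which is exactly where (A\ref{hyp:3}) is used and which is what guarantees that the final constant does not depend on the two measures $\eta_p^f,\eta_p^c$.
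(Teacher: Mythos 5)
Your proposal is correct and follows essentially the same route as the paper: the same difference-of-ratios identity, the same triangle-inequality insertions (up to the symmetric choice of which intermediate term to insert), and the same applications of Lemmata~\ref{lem:av1} and~\ref{lem:av2} together with (A\ref{hyp:1}) and (A\ref{hyp:3}) to control the numerators and denominators. Your explicit lower bound $Q_{p,n}^s(1)\geq C^{\,n-p}$ just spells out what the paper compresses into the remark that $\eta_p^f(Q_{p,n}^f(1))^{-1}\leq C$ by (A\ref{hyp:3}).
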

\begin{proof}
Throughout, $C$ is a constant whose value may change from line-to-line and may depend on $n,p,\varphi,G_n$.
We have
$$
\frac{Q_{p,n}^f(\varphi)(x)}{\eta_p^f(Q_{p,n}^f(1))}-\frac{Q_{p,n}^c(\varphi)(y)}{\eta_p^c(Q_{p,n}^c(1))}
= 
$$
\begin{equation}\label{eq:av1}
\frac{1}{\eta_p^f(Q_{p,n}(1))}[Q_{p,n}^f(\varphi)(x)-Q_{p,n}^c(\varphi)(y)] + 
\frac{Q_{p,n}^c(\varphi)(y)}{\eta_p^f(Q_{p,n}^f(1))\eta_p^c(Q_{p,n}^c(1))}
[\eta_p^c(Q_{p,n}^c(1))-\eta_p^f(Q_{p,n}^f(1))]
\end{equation}
we deal with the two terms on the R.H.S.~of \eqref{eq:av1} separately. 

For the first term, by (A\ref{hyp:3}) $\eta_p^f(Q_{p,n}(1))^{-1}\leq C$ and then
$$
|Q_{p,n}^f(\varphi)(x)-Q_{p,n}^c(\varphi)(y)| \leq |Q_{p,n}^f(\varphi)(x)-Q_{p,n}^c(\varphi)(x)| + |Q_{p,n}^c(\varphi)(x)-Q_{p,n}^c(\varphi)(y)|.
$$
Applying Lemma \ref{lem:av1} for the second term on the R.H.S.~and Lemma \ref{lem:av2} for the first term
$$
|Q_{p,n}^f(\varphi)(x)-Q_{p,n}^c(\varphi)(y)| \leq C(|\|M_n^{f,c}\||+|x-y|).
$$
Hence, 
\begin{equation}\label{eq:av2}
\Big|\frac{1}{\eta_p^f(Q_{p,n}(1))}[Q_{p,n}^f(\varphi)(x)-Q_{p,n}^c(\varphi)(y)]\Big| \leq C(|\|M_n^{f,c}\||+|x-y|).
\end{equation}

For the second term on the R.H.S.~of \eqref{eq:av1}, by (A\ref{hyp:1}) and (A\ref{hyp:3})
$$
\frac{Q_{p,n}^c(\varphi)(y)}{\eta_p^f(Q_{p,n}^f(1))\eta_p^c(Q_{p,n}^c(1))} \leq C.
$$
Then 
$$
|\eta_p^c(Q_{p,n}^c(1))-\eta_p^f(Q_{p,n}^f(1))| \leq 
|[\eta_p^c-\eta_p^f](Q_{p,n}^c(1))| + |\eta_p^f(Q_{p,n}^c(1)-Q_{p,n}^f(1))|.
$$
Clearly,
$$
|[\eta_p^c-\eta_p^f](Q_{p,n}^c(1))| =  \|Q_{p,n}^c(1)\|\Big|[\eta_p^c-\eta_p^f]\Big(\frac{Q_{p,n}^c(1)}{\|Q_{p,n}^c(1)\|}\Big)\Big| \leq C\|\eta_p^f-\eta_p^c\|_{\textrm{tv}}.
$$
By Lemma \ref{lem:av2}
$$
|\eta_p^f(Q_{p,n}^c(1)-Q_{p,n}^f(1))| \leq  C |\|M_n^{f,c}\||.
$$
Thus, we have shown that
\begin{equation}\label{eq:av3}
\Big|\frac{Q_{p,n}^c(\varphi)(y)}{\eta_p^f(Q_{p,n}^f(1))\eta_p^c(Q_{p,n}^c(1))}
[\eta_p^c(Q_{p,n}^c(1))-\eta_p^f(Q_{p,n}^f(1))]\Big| \leq C
(\|\eta_p^f-\eta_p^c\|_{\textrm{tv}} + |\|M_n^{f,c}\||).
\end{equation}
Combining \eqref{eq:av1} with \eqref{eq:av2} and  \eqref{eq:av3} completes the proof.
\end{proof}
\begin{lem}\label{lem:av4}
Assume (A\ref{hyp:1}-\ref{hyp:3}). Then for any $n\geq 1$, $0\leq p < n$, $\varphi\in \textrm{\emph{Lip}}(\mathsf{X})\cap\mathcal{B}_b(\mathsf{X})$,
there exists a $C<+\infty$ such that for any $(x,y)\in\mathsf{X}\times\mathsf{X}$
$$
|D_{p,n}^f(\varphi)(x)-D_{p,n}^c(\varphi)(y)| \leq
C\Big(|x-y|+\|\eta_p^f-\eta_p^c\|_{\textrm{\emph{tv}}}+\|\eta_n^f-\eta_n^c\|_{\textrm{\emph{tv}}}+|\|M_n^{f,c}\||\Big)
$$
where $C$ does not depend on $\eta_p^f,\eta_p^c,\eta_n^f,\eta_n^c$.
\end{lem}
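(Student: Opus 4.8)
The plan is to reduce the statement entirely to Lemma~\ref{lem:av3}, so that the proof becomes a short bookkeeping argument. First I would record the identity, valid for $0\le p<n$ and $s\in\{f,c\}$,
$$
D_{p,n}^s(\varphi)(x)=\frac{Q_{p,n}^s(\varphi)(x)}{\eta_p^s(Q_{p,n}^s(1))}-\eta_n^s(\varphi)\,\frac{Q_{p,n}^s(1)(x)}{\eta_p^s(Q_{p,n}^s(1))},
$$
which is immediate from the definition of $D_{p,n}^s$ together with linearity of $Q_{p,n}^s$. Subtracting the $c$-version evaluated at $y$ from the $f$-version evaluated at $x$ then splits $D_{p,n}^f(\varphi)(x)-D_{p,n}^c(\varphi)(y)$ into a ``numerator'' contribution $\tfrac{Q_{p,n}^f(\varphi)(x)}{\eta_p^f(Q_{p,n}^f(1))}-\tfrac{Q_{p,n}^c(\varphi)(y)}{\eta_p^c(Q_{p,n}^c(1))}$ and a ``normalization'' contribution $\eta_n^f(\varphi)\tfrac{Q_{p,n}^f(1)(x)}{\eta_p^f(Q_{p,n}^f(1))}-\eta_n^c(\varphi)\tfrac{Q_{p,n}^c(1)(y)}{\eta_p^c(Q_{p,n}^c(1))}$.

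The numerator contribution is controlled directly by Lemma~\ref{lem:av3} (applicable since $\varphi\in\textrm{Lip}(\mathsf{X})\cap\mathcal{B}_b(\mathsf{X})$), which bounds it by $C(|x-y|+\|\eta_p^f-\eta_p^c\|_{\textrm{tv}}+|\|M_n^{f,c}\||)$ with $C$ not depending on the measures. For the normalization contribution I would add and subtract $\eta_n^f(\varphi)\tfrac{Q_{p,n}^c(1)(y)}{\eta_p^c(Q_{p,n}^c(1))}$ and rewrite it as
$$
\eta_n^f(\varphi)\Big[\frac{Q_{p,n}^f(1)(x)}{\eta_p^f(Q_{p,n}^f(1))}-\frac{Q_{p,n}^c(1)(y)}{\eta_p^c(Q_{p,n}^c(1))}\Big]+\frac{Q_{p,n}^c(1)(y)}{\eta_p^c(Q_{p,n}^c(1))}\,\big[\eta_n^f(\varphi)-\eta_n^c(\varphi)\big].
$$
Since the constant function $1$ lies in $\textrm{Lip}(\mathsf{X})\cap\mathcal{B}_b(\mathsf{X})$, Lemma~\ref{lem:av3} with $\varphi\equiv 1$ bounds the bracketed difference by $C(|x-y|+\|\eta_p^f-\eta_p^c\|_{\textrm{tv}}+|\|M_n^{f,c}\||)$, and $|\eta_n^f(\varphi)|\le\|\varphi\|$. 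In the remaining summand, (A\ref{hyp:1}) bounds $Q_{p,n}^c(1)(y)$ from above and (A\ref{hyp:3}) bounds $\eta_p^c(Q_{p,n}^c(1))$ from below by a strictly positive constant, so the prefactor is bounded uniformly in the measures, while $|\eta_n^f(\varphi)-\eta_n^c(\varphi)|\le C\|\eta_n^f-\eta_n^c\|_{\textrm{tv}}$ because $\varphi$ is bounded. Collecting the three pieces yields the asserted inequality.

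There is no genuine obstacle here; the only thing that requires attention is that every constant stays independent of $\eta_p^f,\eta_p^c,\eta_n^f,\eta_n^c$. This is guaranteed because the constant produced by Lemma~\ref{lem:av3} is, by that lemma's statement, independent of $\eta_p^f,\eta_p^c$, and because the lower bound coming from (A\ref{hyp:3}) on $\inf_x G_n(x)$---hence on $Q_{p,n}^s(1)(\cdot)$ and on $\eta_p^s(Q_{p,n}^s(1))$---is a fixed positive number not depending on the measures involved.
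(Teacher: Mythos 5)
Your proof is correct and follows essentially the same route as the paper: decompose $D_{p,n}^f(\varphi)(x)-D_{p,n}^c(\varphi)(y)$ into a piece controlled by Lemma~\ref{lem:av3} and a piece proportional to $[\eta_n^f-\eta_n^c](\varphi)$, the latter bounded using (A\ref{hyp:1}) and (A\ref{hyp:3}). If anything, your decomposition is the more careful one: the identity displayed in the paper's proof is not algebraically exact as written (it effectively conflates $Q_{p,n}^f(1)(x)/\eta_p^f(Q_{p,n}^f(1))$ with $Q_{p,n}^f(\varphi)(x)/\eta_p^f(Q_{p,n}^f(1))$), whereas yours is exact at the modest cost of one extra application of Lemma~\ref{lem:av3} to the constant function $1$.
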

\begin{proof}
We have
$$
D_{p,n}^f(\varphi)(x)-D_{p,n}^c(\varphi)(y) = 
\Big(\frac{Q_{p,n}^f(\varphi)(x)}{\eta_p^f(Q_{p,n}^f(1))}-\frac{Q_{p,n}^c(\varphi)(y)}{\eta_p^c(Q_{p,n}^c(1))}\Big)(1 - \eta_n^f(\varphi)) 
+ \frac{Q_{p,n}^c(\varphi)(y)}{\eta_p^c(Q_{p,n}^c(1))}[\eta_n^f-\eta_n^c](\varphi).
$$
The proof then easily follows by Lemma \ref{lem:av3} and standard calculations.
\end{proof}

\section{Figures}

\begin{figure}[H]
\centering
  \includegraphics[width=0.475\textwidth]{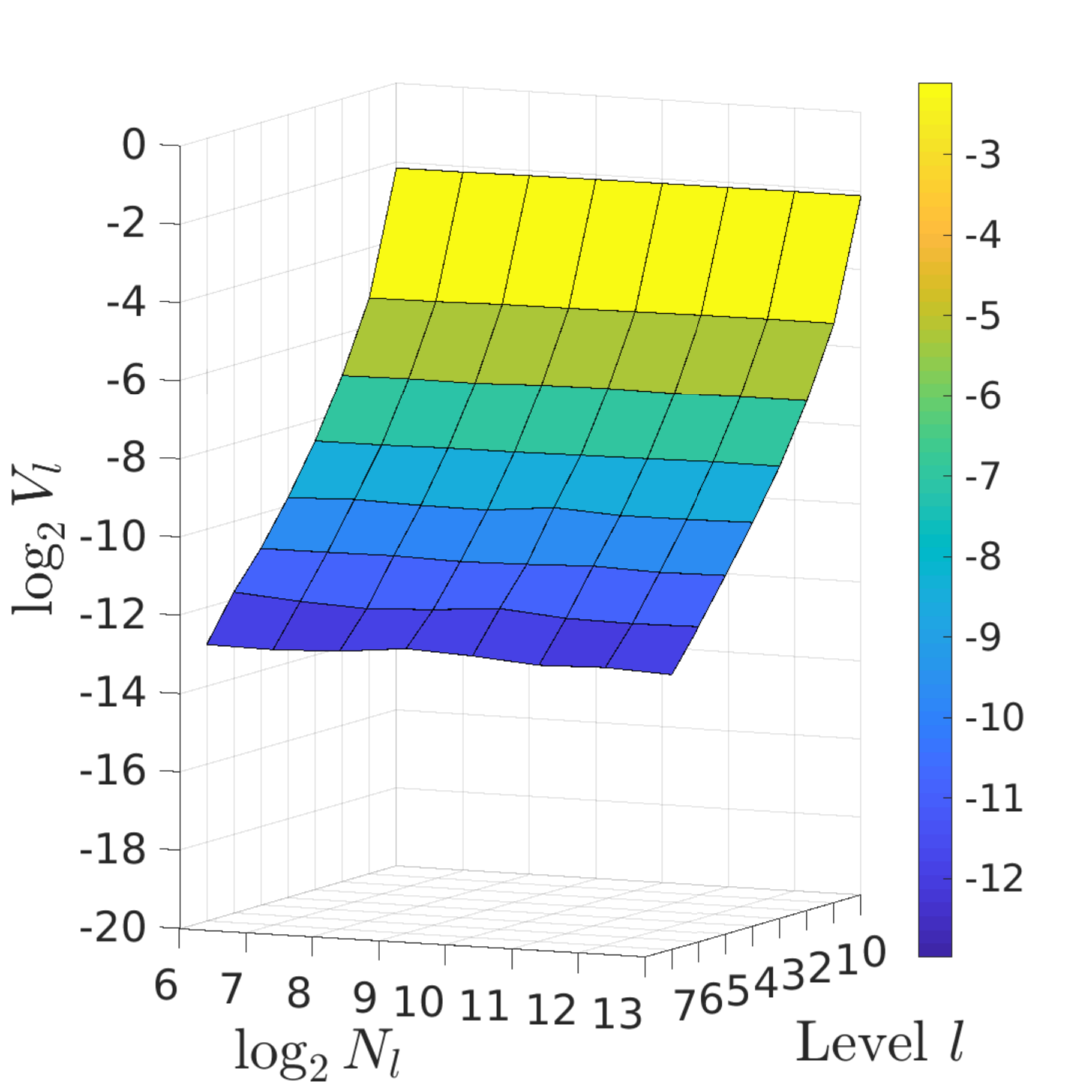}
  \hfill
  \includegraphics[width=0.475\textwidth]{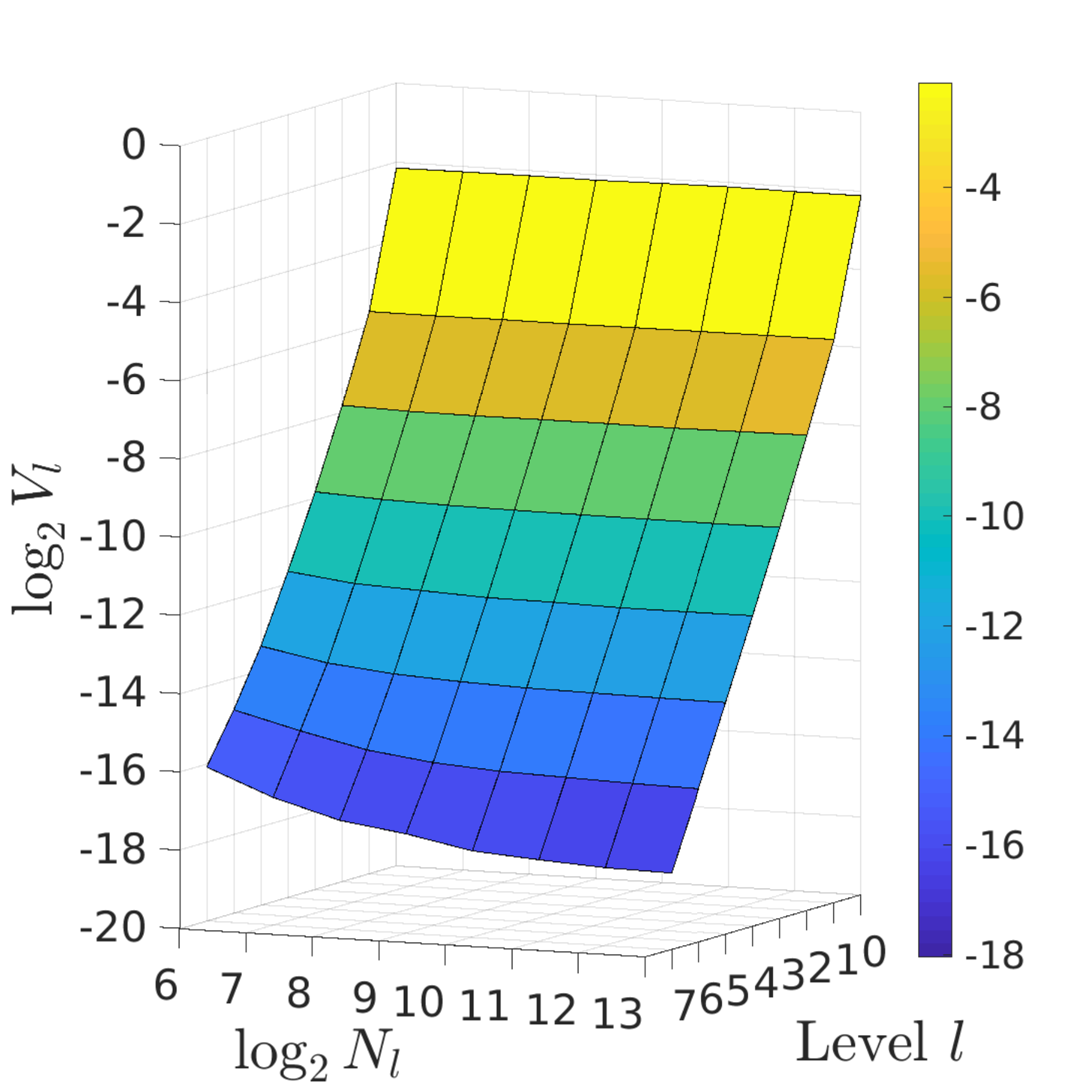}
  \caption[]{\textbf{OU. Variance convegence study.} 
    The variance estimate $V_l$, as defined in~\eqref{eq:variance}, as a function 
    of the discretization level, $l$, and the number of particles, $N_l$.
    On the left the results using Algorithm~\ref{alg:coupled} are shown, 
    and on the right those using Algorithm~\ref{alg:CDFcoupling}.} 
  \label{fig:ou_variance}
\end{figure}

\begin{figure}[H]
\centering
  \includegraphics[width=90mm]{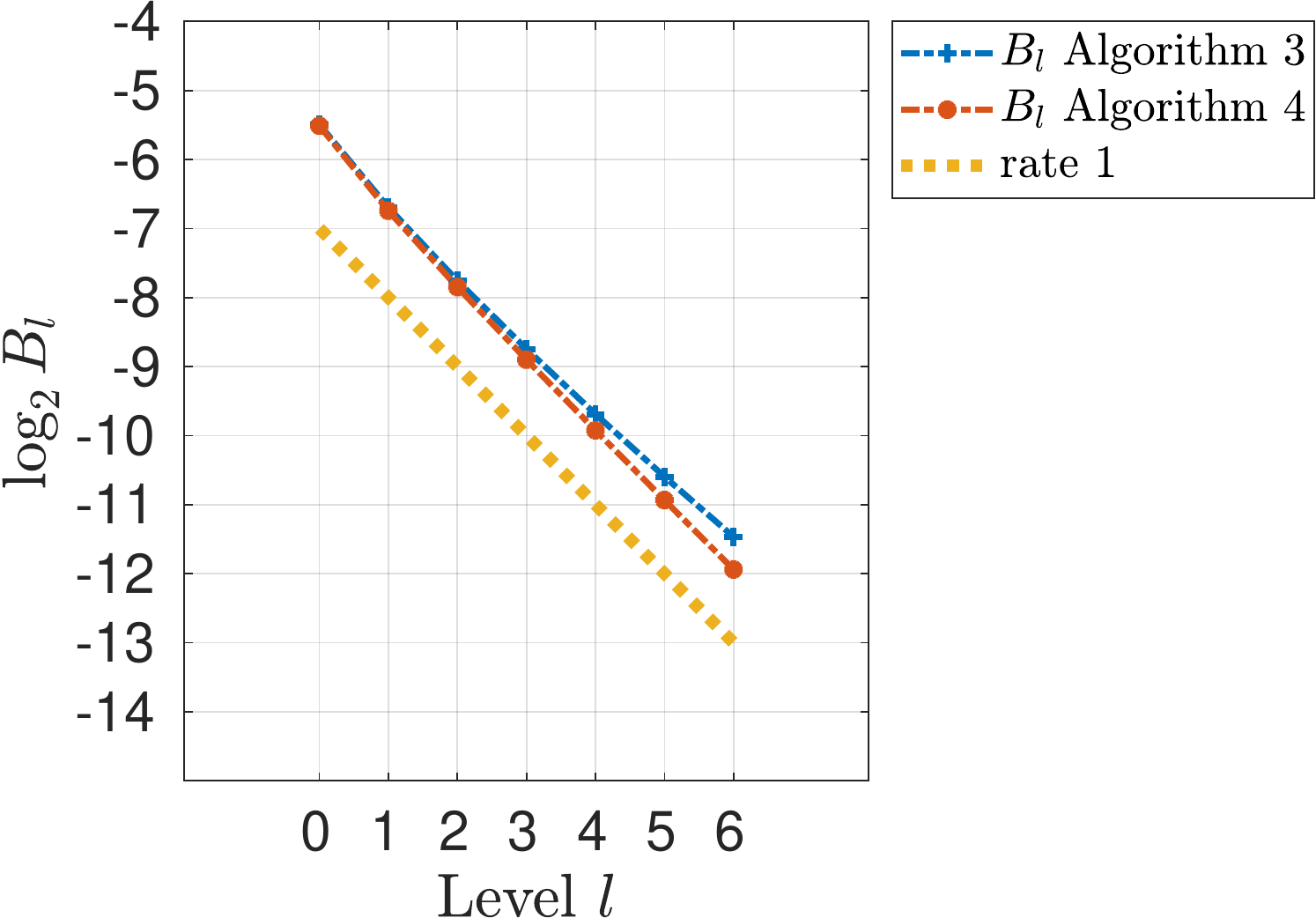} 
    \caption[]{\textbf{OU. Bias estimate.}
    The estimated bias of the filter distribution expectation, $B_l$, as defined 
    in~\eqref{eq:bias}, as a function of the discretization level, $l$.
}   
 \label{fig:ou_bias}
\end{figure}

\begin{figure}[H]
\centering
  \includegraphics[width=0.42\textwidth]{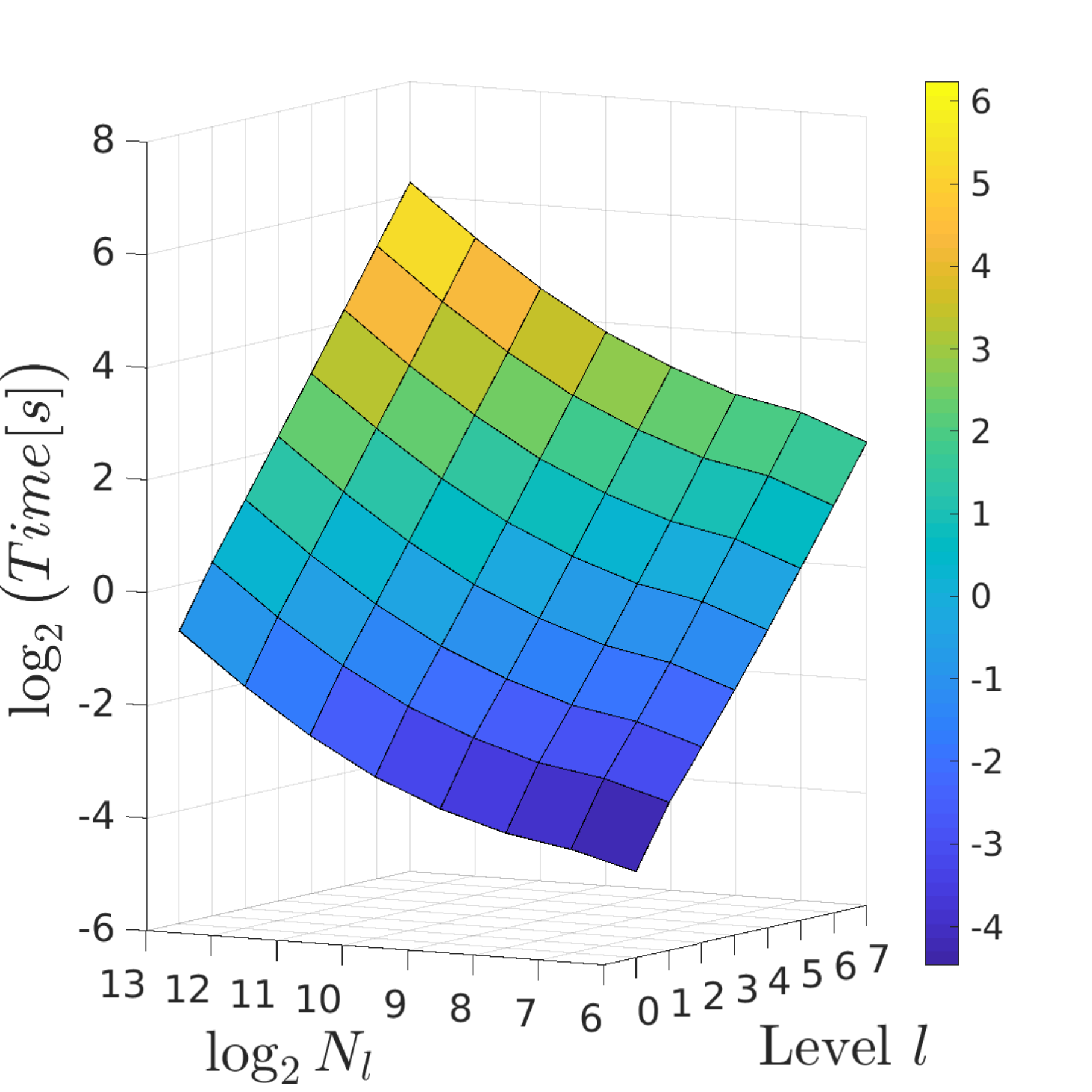} 
  \hfill
  \includegraphics[width=0.55\textwidth]{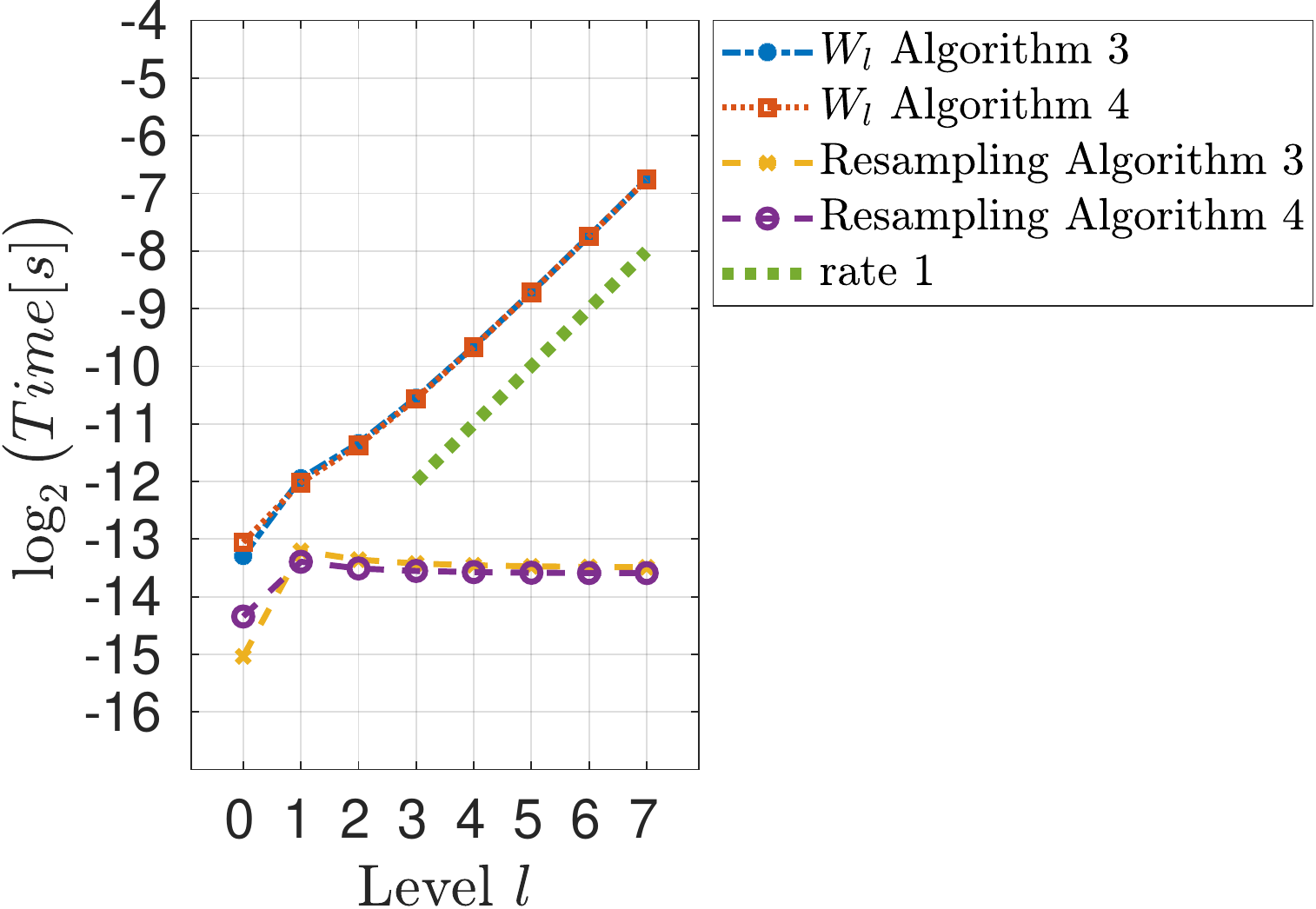}    
  \caption[]{\textbf{OU. Computational time.} 
    Measured wall clock time in seconds for the Euler-Maruyama time stepping as a 
    function of number of coupled particles $N_l$ and level $l$ (left). 
    Total measured wall clock time per particle, as a function of 
    the level $l$, with the measured time of the resampling alone for comparison. 
    Time per particle is based on the measured time for $N=2^{13}$ 
    particles (right).} 
  \label{fig:ou_costs}
\end{figure}

\begin{figure}[H]
\centering
  \includegraphics[width=75mm]{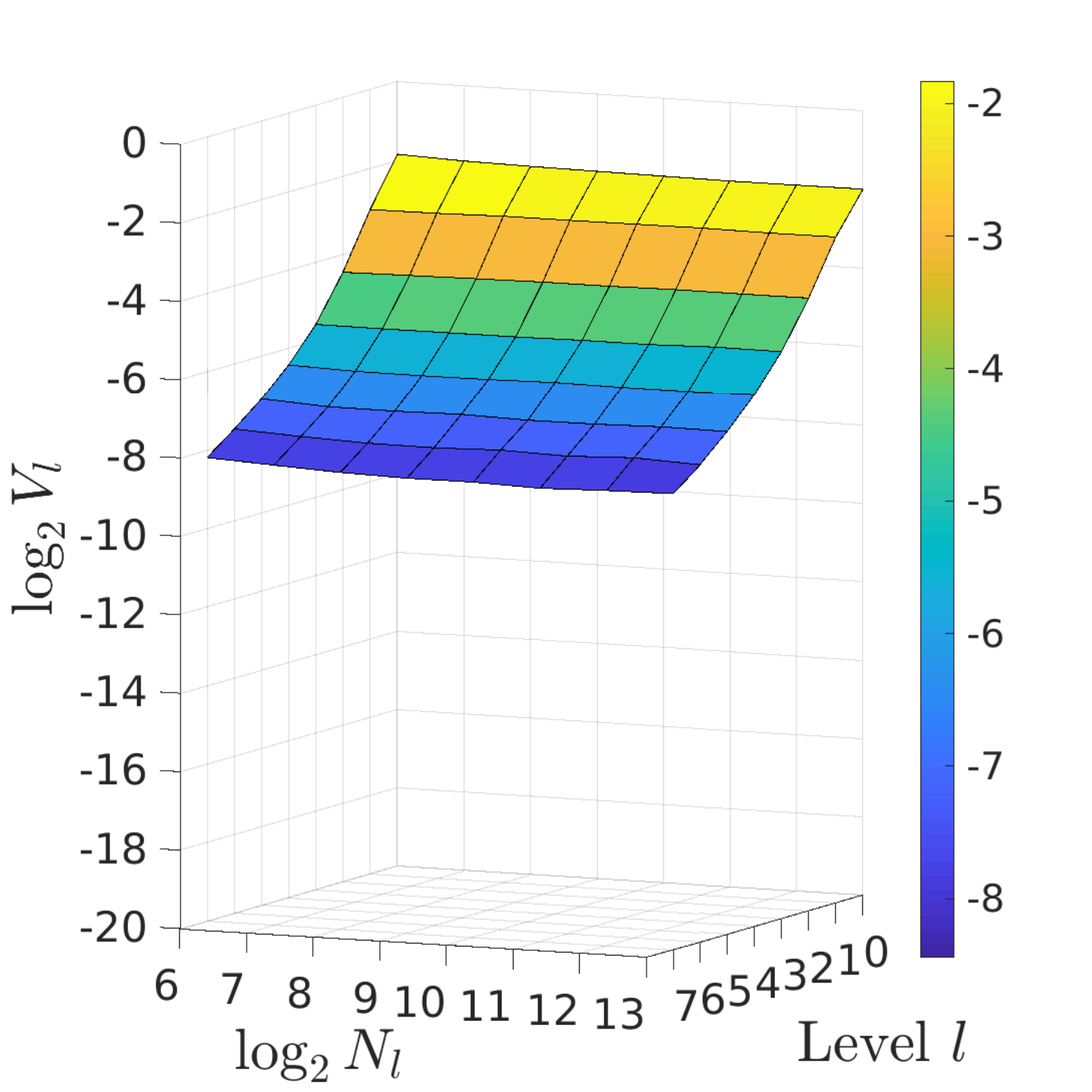}
  \hfill
  \includegraphics[width=75mm]{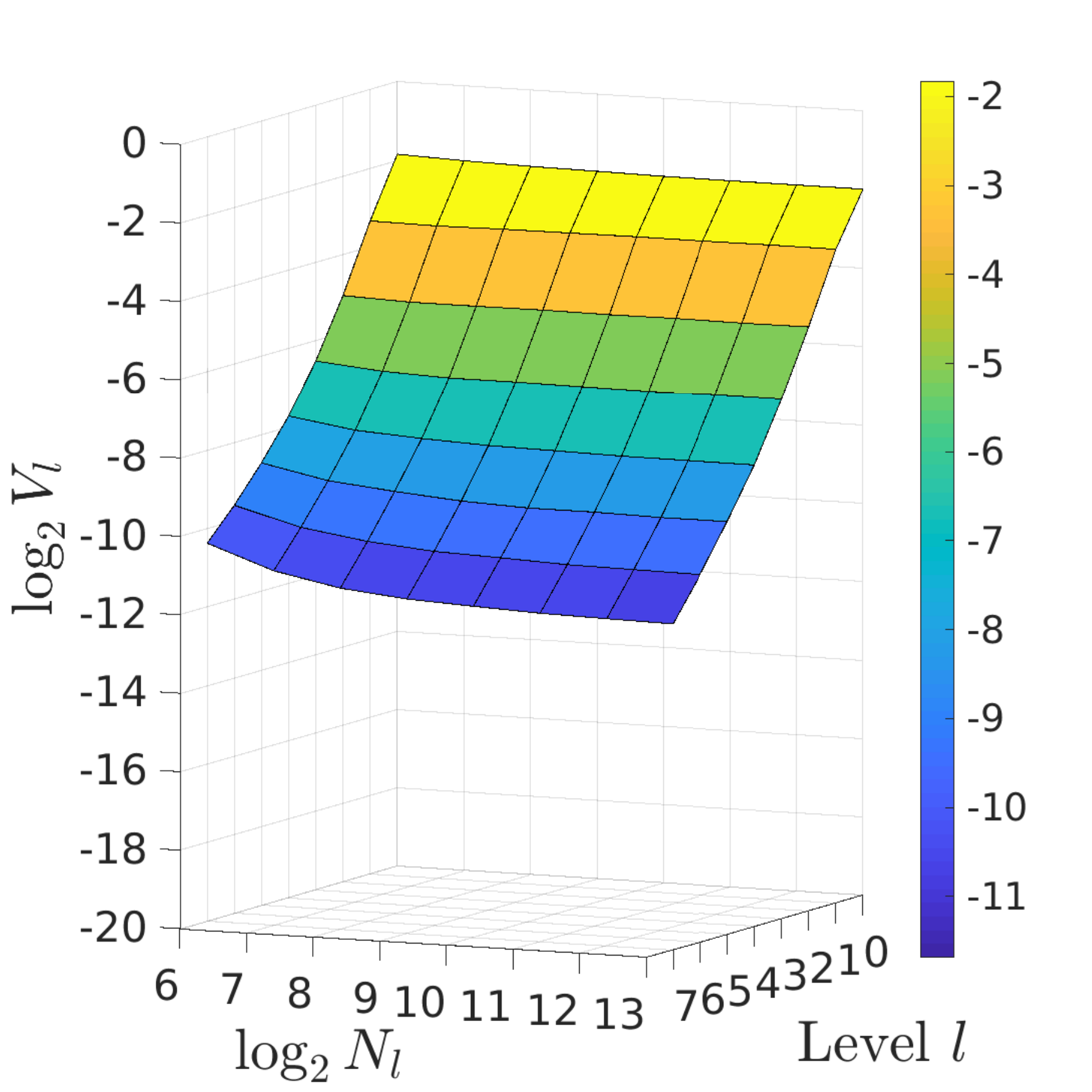}
  \caption[]{\textbf{NDT. Variance convegence study.} 
  The variance estimate $V_l$, as defined in~\eqref{eq:variance}, as a function 
  of the discretization level, $l$, and the number of particles, $N_l$.
  On the left the result using Algorithm~\ref{alg:coupled} are displayed, and on 
  the right those using Algorithm~\ref{alg:CDFcoupling}.} 
  \label{fig:ndt_variance}
\end{figure}

\begin{figure}[H]
\centering
  \includegraphics[width=90mm]{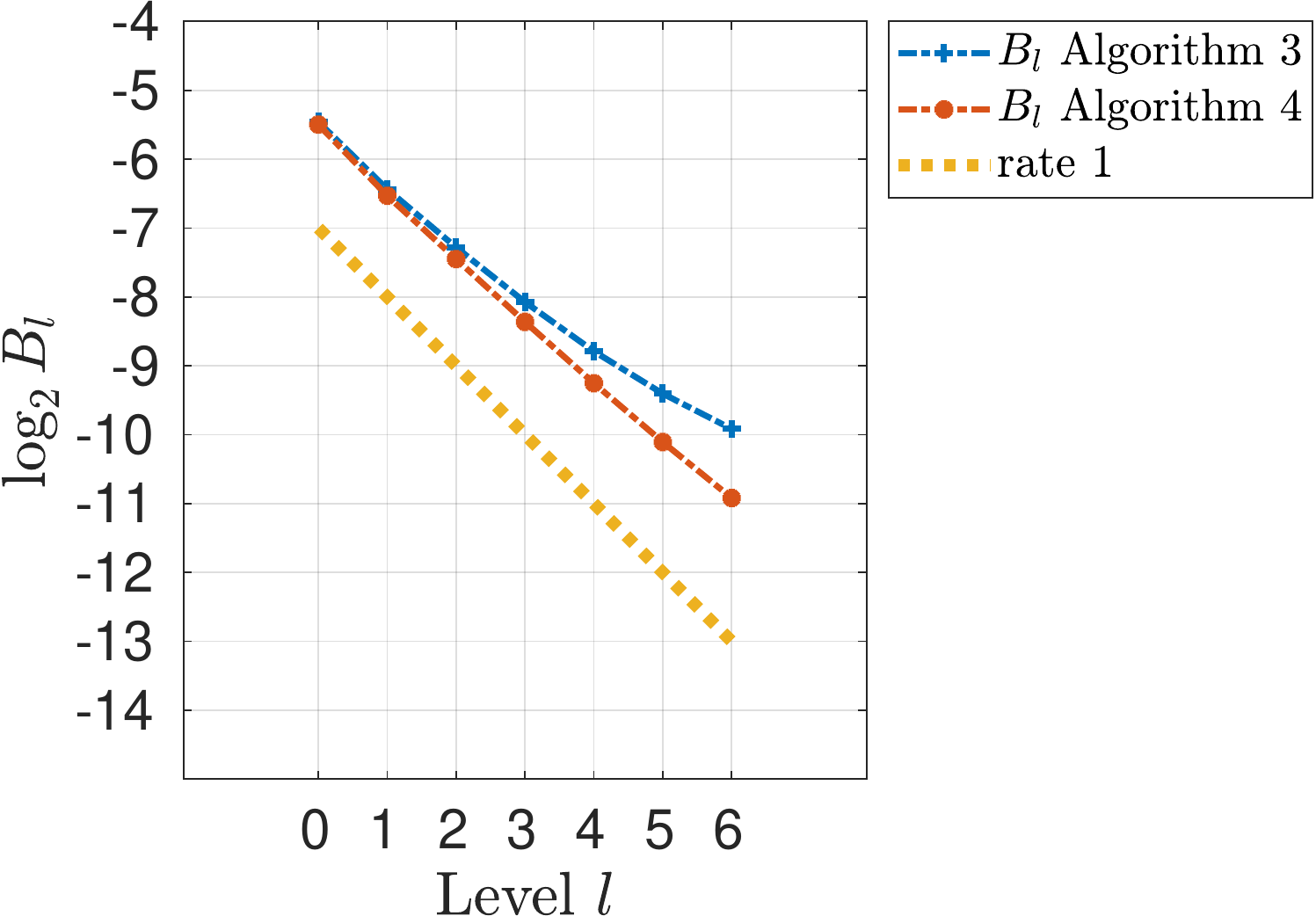}  
  \caption[]{\textbf{NDT. Bias estimate.}
    The estimated bias of the filter distribution expectation, $B_l$, as defined 
    in~\eqref{eq:bias}, as a function of the discretization level, $l$.}   
  \label{fig:ndt_bias}
\end{figure}

\begin{figure}[H]
\centering
  \includegraphics[width=0.55\textwidth]{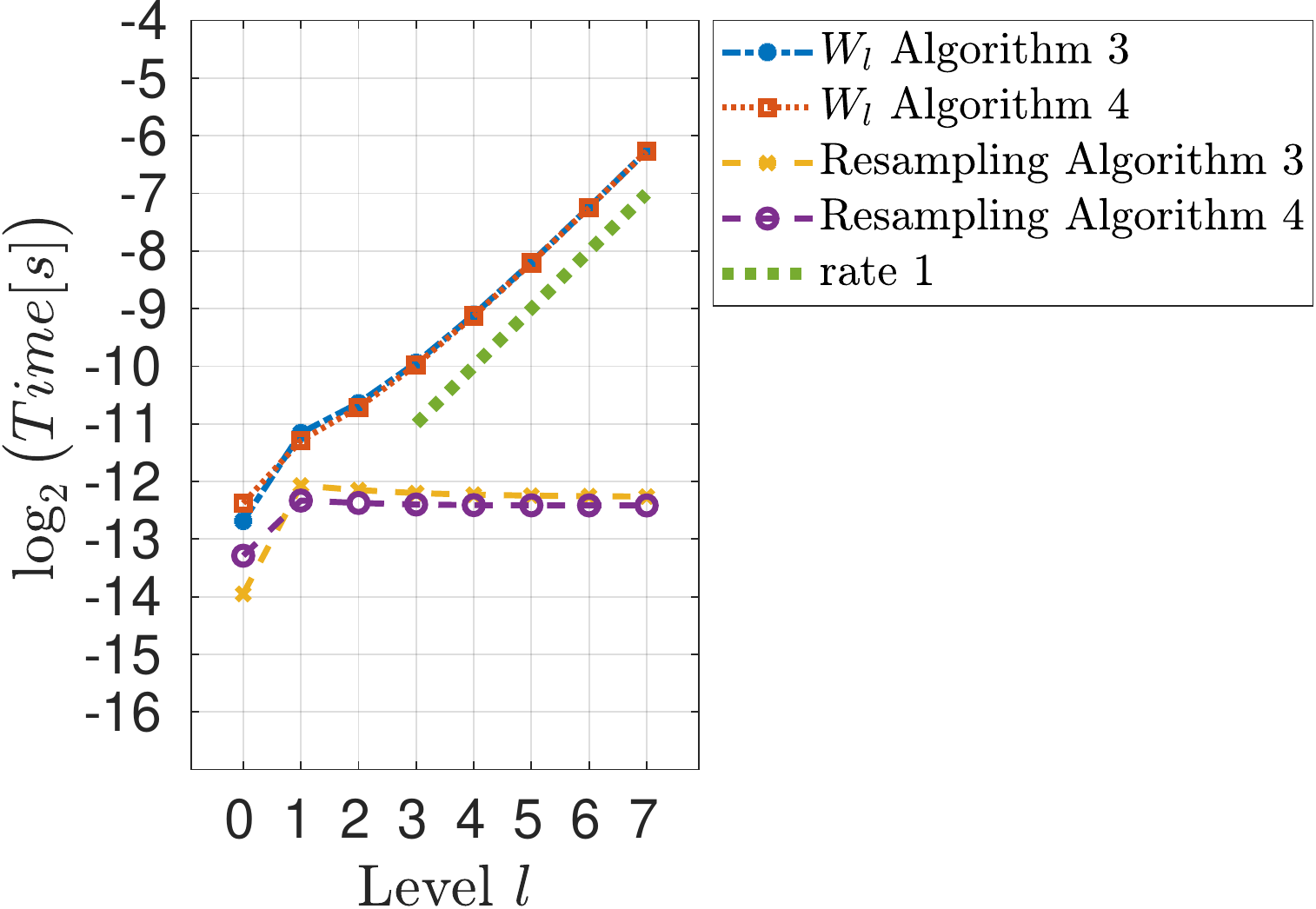}
\caption[]{\textbf{NDT. Computational time.} 
  Measured wall clock time per particle in seconds as a function of levels $l$, 
  based on the measured time for $N=2^{13}$ particles.}
\label{fig:ndt_work}
\end{figure}

\begin{figure}[H]
  \centering
  \includegraphics[width=15cm]{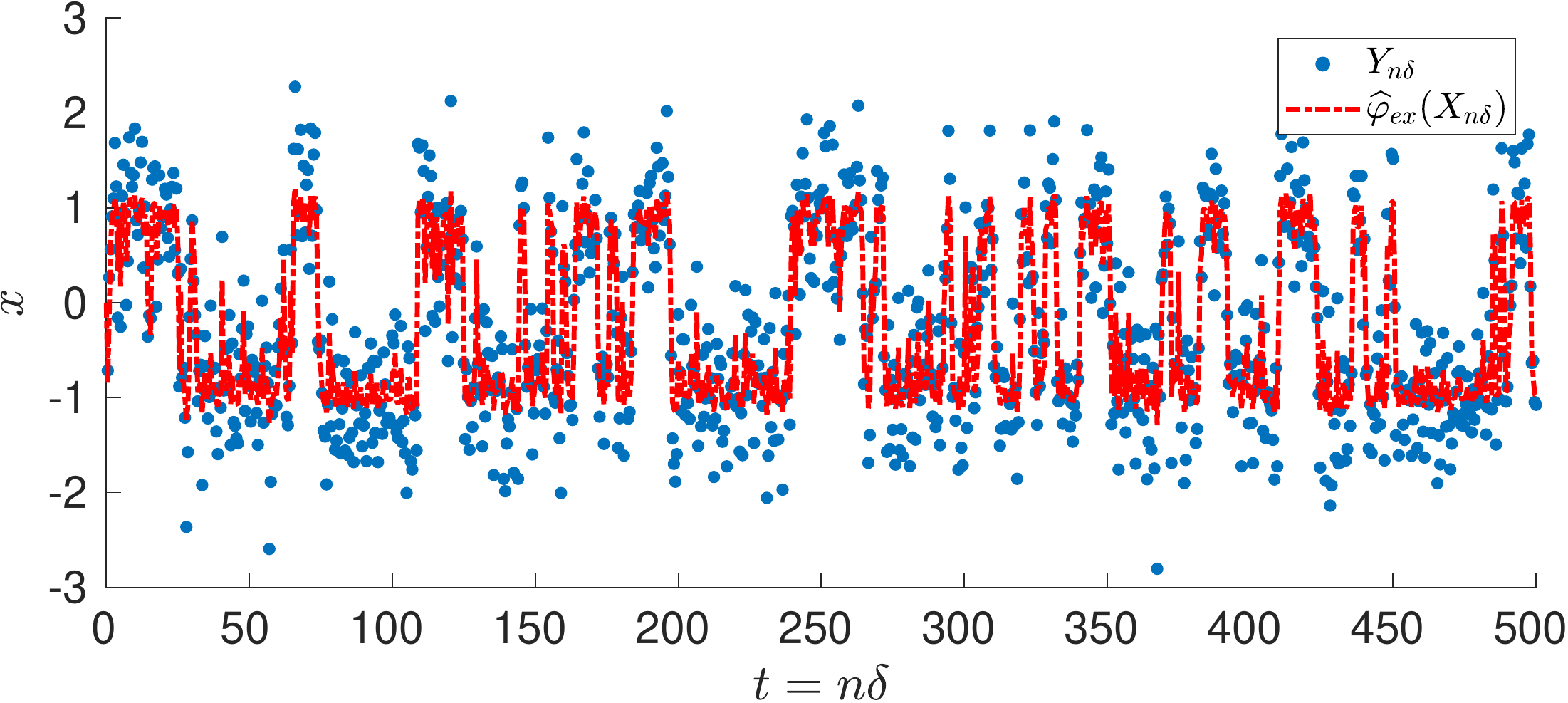}
  \caption[]{\textbf{DW. Sample trajectory of data and filter expectation.} Here,
    $Y_{n\delta}$ are synthetic observations, from a numerical approximation of
    the underlying dynamics~\eqref{eq:DW_dynamics} with additive measurement 
    noise. A highly accurate reference solution, $\widehat{\varphi}_{ex}(X_T)$, 
    of the corresponding filter expectation, computed by numerical approximation 
    of the Fokker-Planck equation is included.
  } 
\label{fig:dw_dynamics}
\end{figure}

\begin{figure}[H]
\begin{subfigure}{.5\textwidth}
\centering%
  \includegraphics[width=\textwidth]{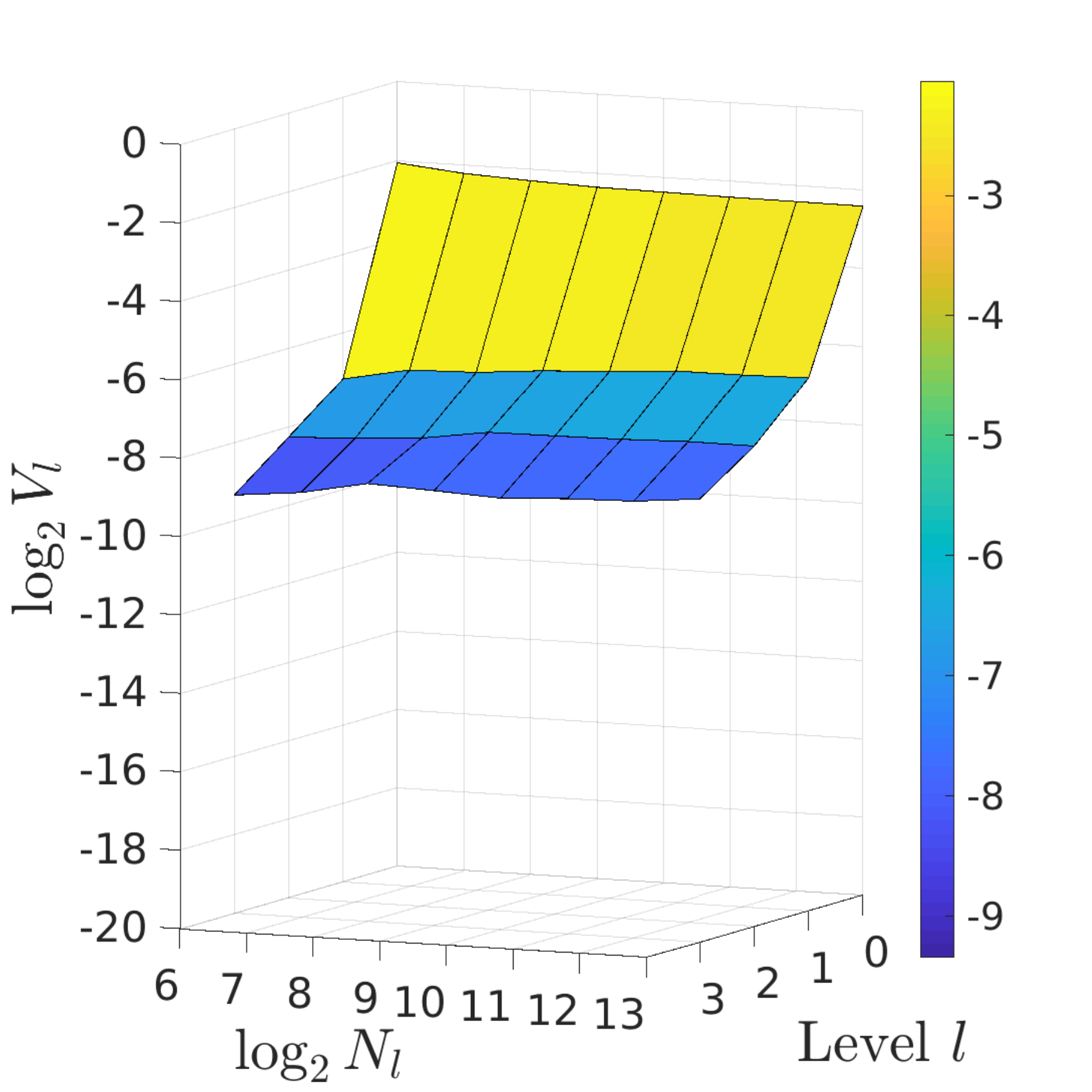}
\caption{\textbf{Algorithm~\ref{alg:coupled} with change of measure}}
\label{fig:s1}
\end{subfigure}
~
\begin{subfigure}{.5\textwidth}
\centering%
  \includegraphics[width=\textwidth]{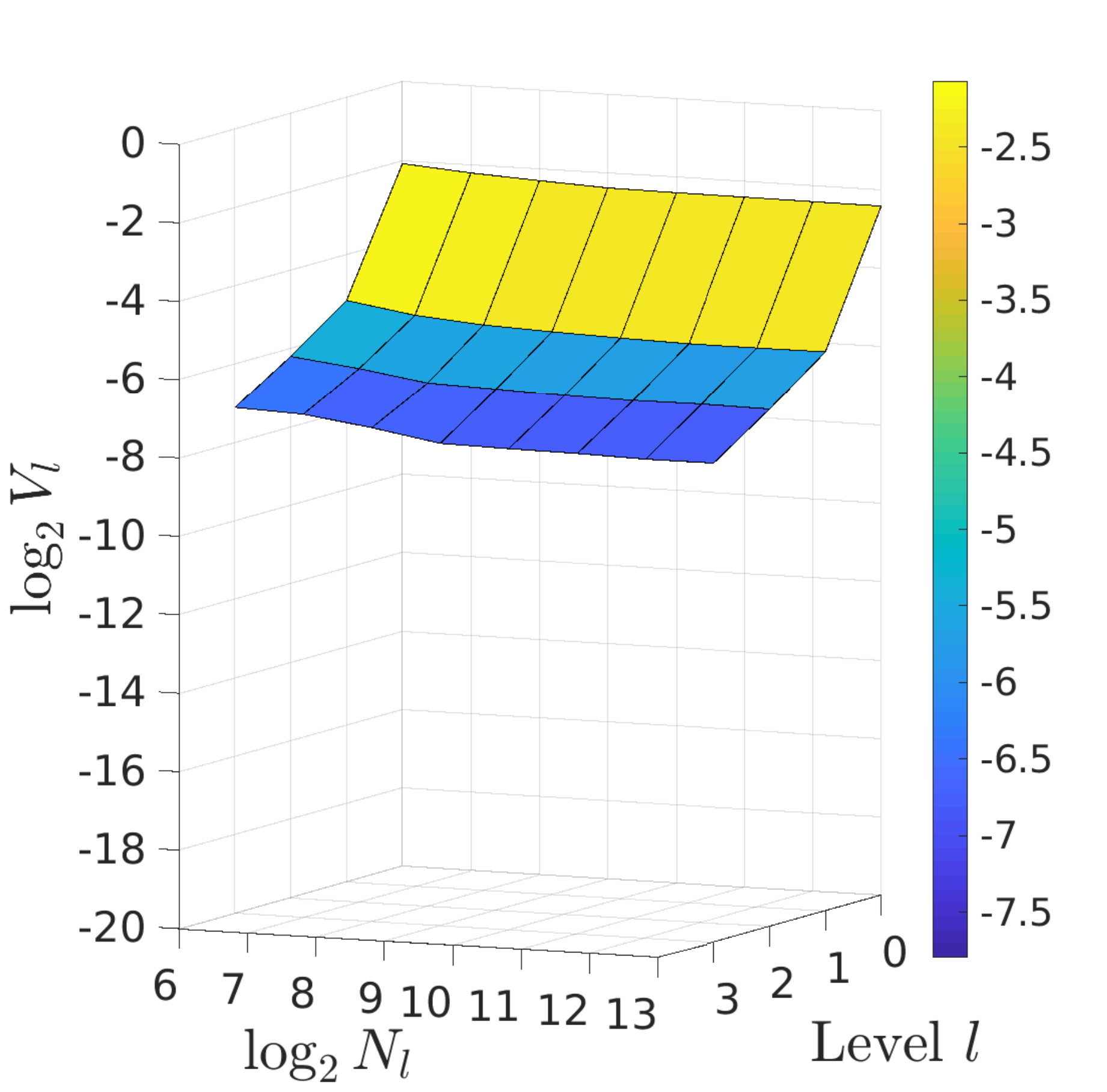}  \\
\caption{\textbf{Algorithm~\ref{alg:coupled}}}
\label{fig:s2}
\end{subfigure}
  
\begin{subfigure}{.5\textwidth}
\centering%
  \includegraphics[width=\textwidth]{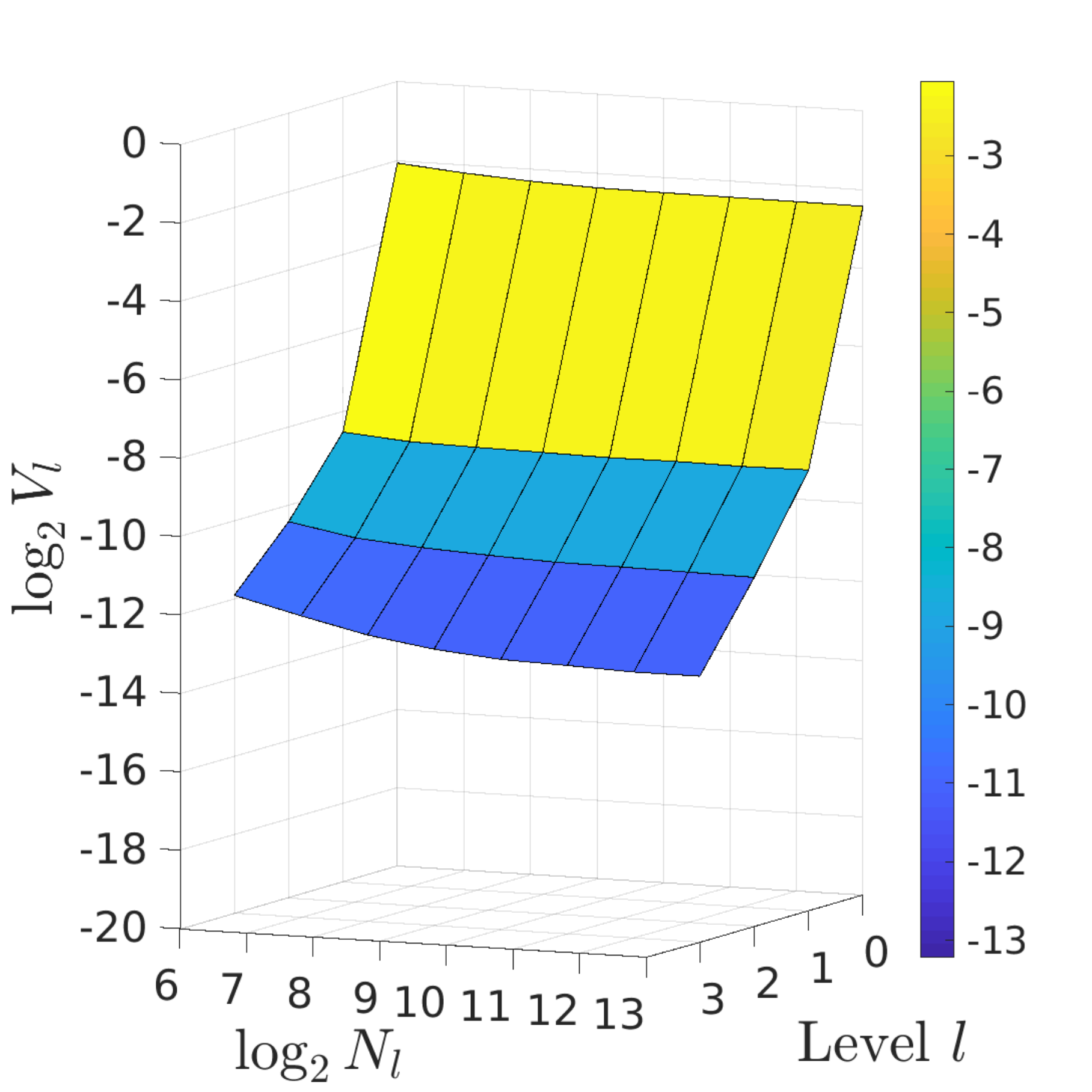}
\caption{\textbf{Algorithm~\ref{alg:CDFcoupling} with change of measure}}
\label{fig:s3}
\end{subfigure}
~  
\begin{subfigure}{.5\textwidth}
\centering%
  \includegraphics[width=\textwidth]{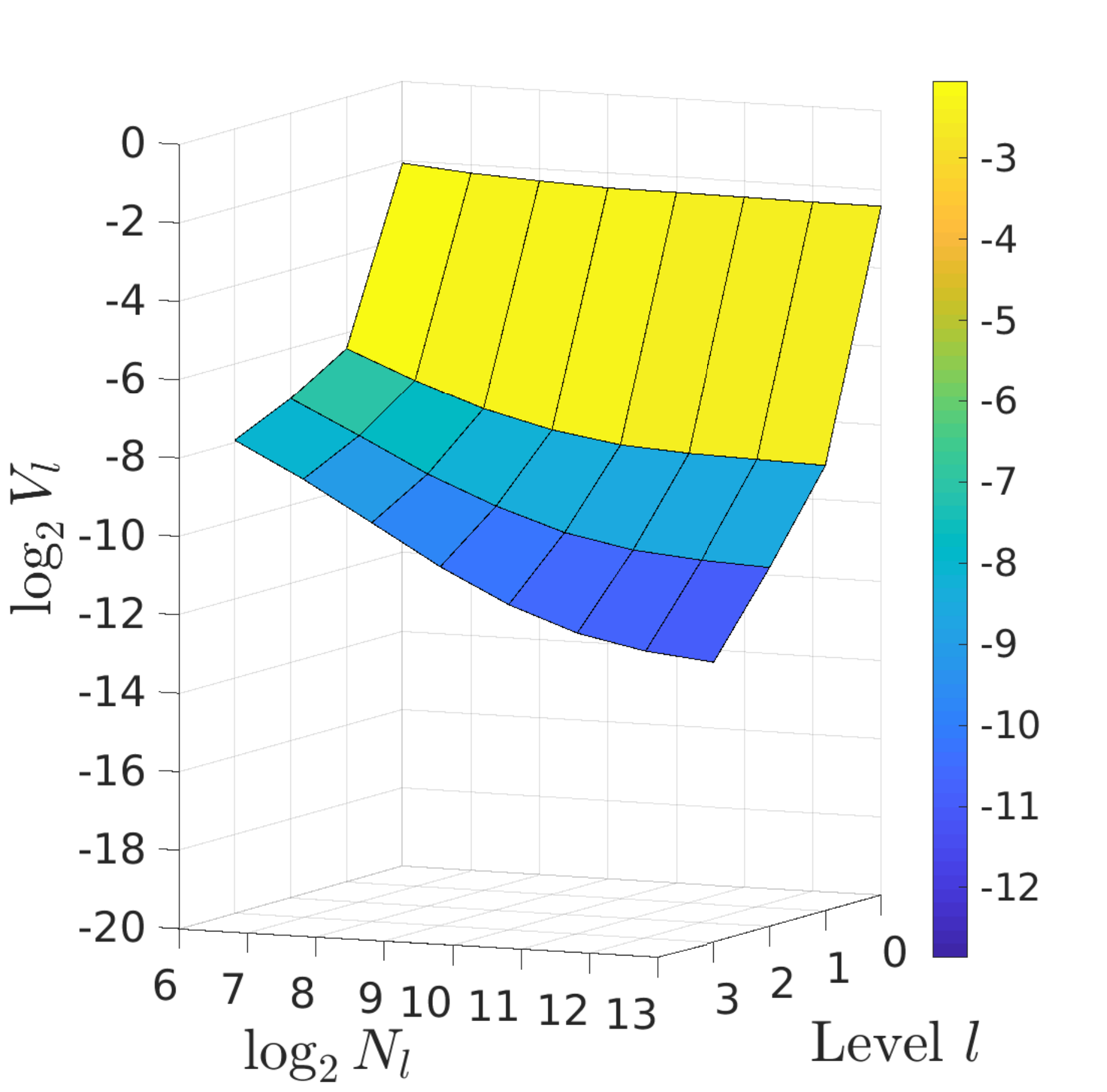}  
  \caption{\textbf{Algorithm~\ref{alg:CDFcoupling}}}
\label{fig:s4}
\end{subfigure}

  \caption[]{\textbf{DW. Variance convegence study.} 
  The variance estimate $V_l$, as defined in~\eqref{eq:variance}, as a function 
  of the discretization level, $l$, and the number of particles, $N_l$. Based  
  on the dynamics~\eqref{eq:DW_dynamics} with and without the change of 
  measure in Appendix~\ref{app:changeofmeasure}.} 
  \label{fig:dw_variance}
\end{figure}

\begin{figure}[H]
\centering
  \includegraphics[width=90mm]{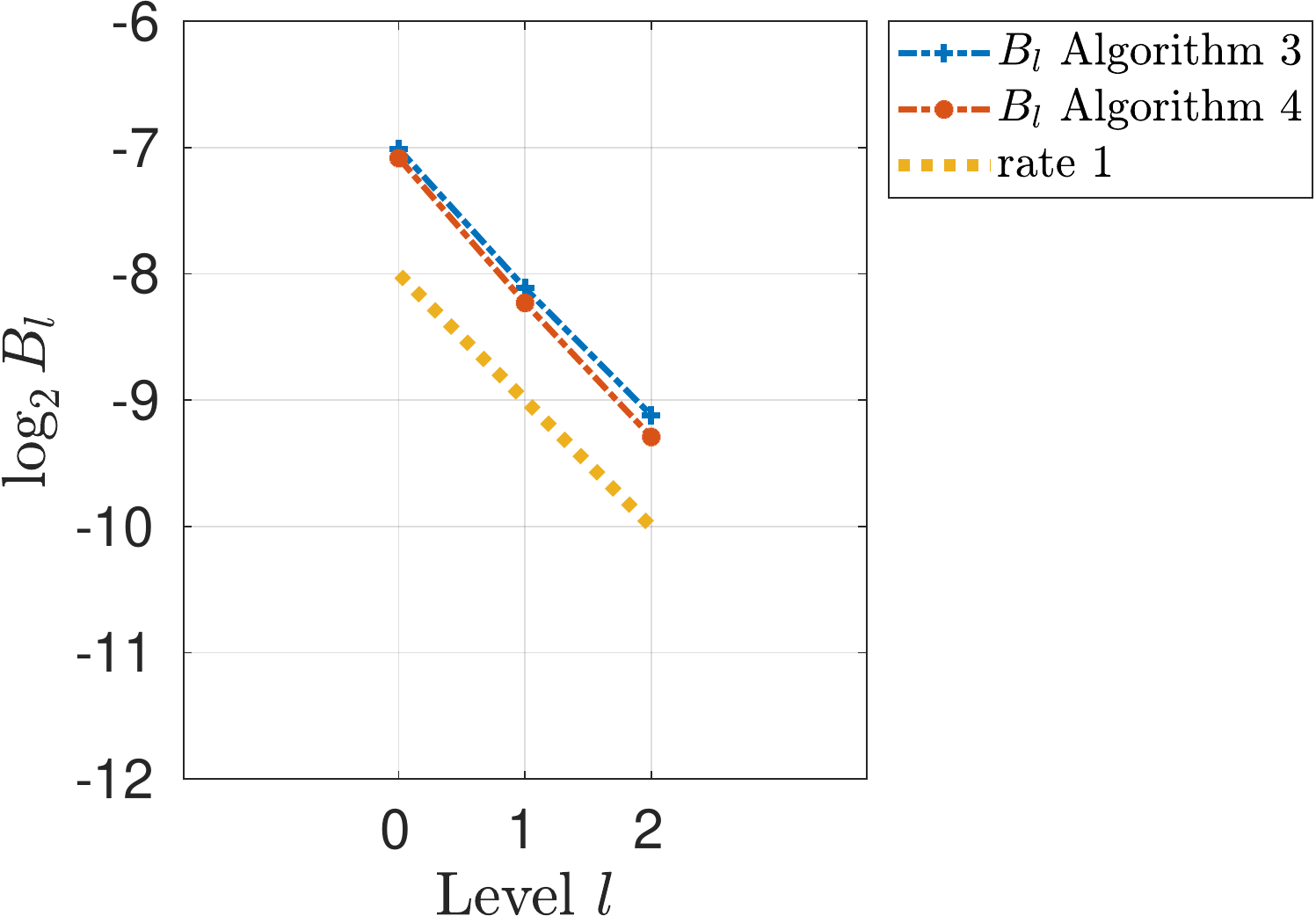}  \label{fig:}
    \caption[]{\textbf{DW. Bias estimate.} The estimated bias of the filter 
      distribution expectation, $B_l$, as defined in~\eqref{eq:bias}, as a 
      function of the discretization level, $l$. Based on the 
      dynamics~\eqref{eq:DW_dynamics} with the change of measure in 
      Appendix~\ref{app:changeofmeasure}.} 
     \label{fig:dw_bias}  
\end{figure}

\begin{figure}[H]
\centering
  \includegraphics[width=0.42\textwidth]{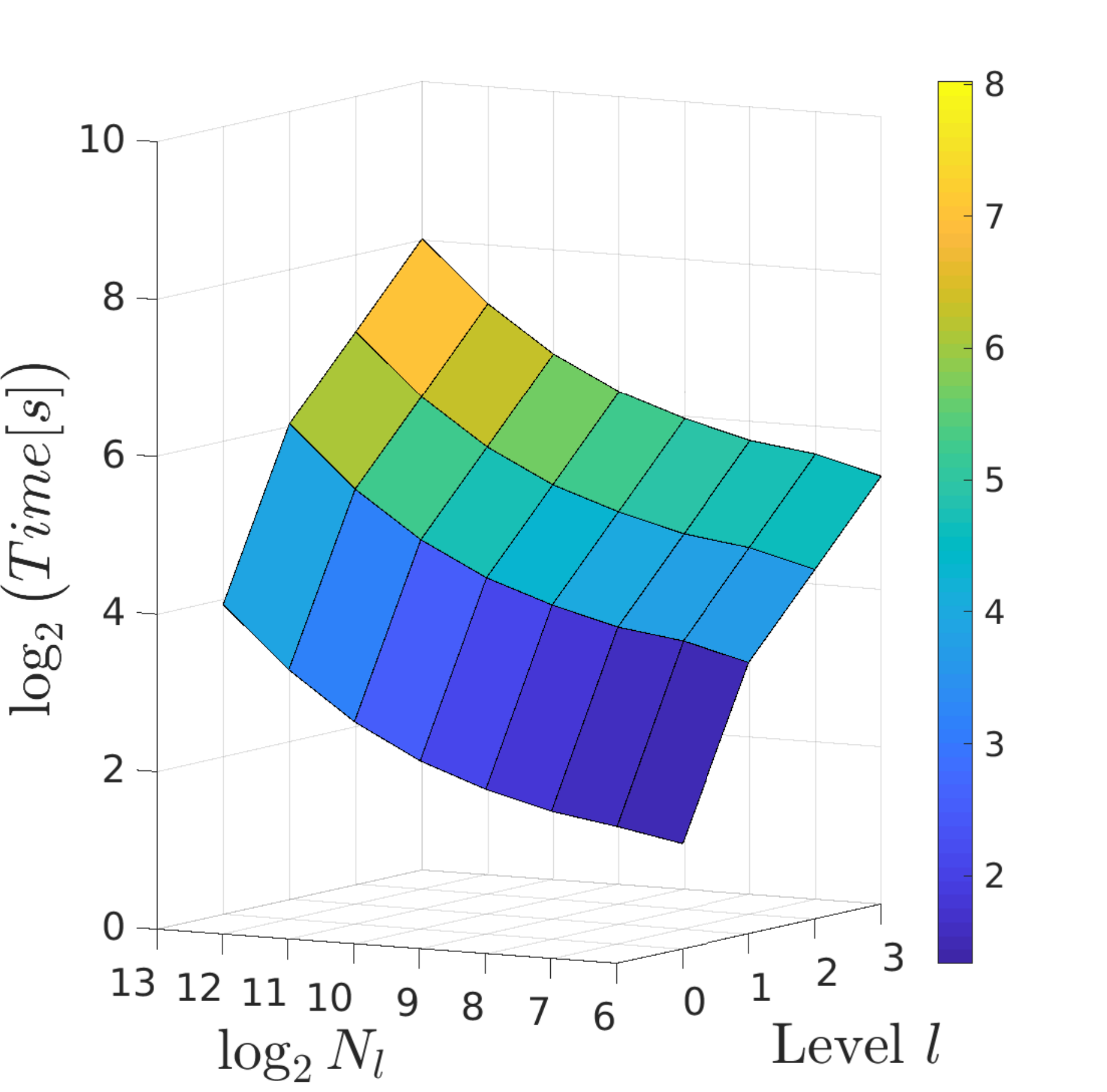} 
  \hfill
  \includegraphics[width=0.55\textwidth]{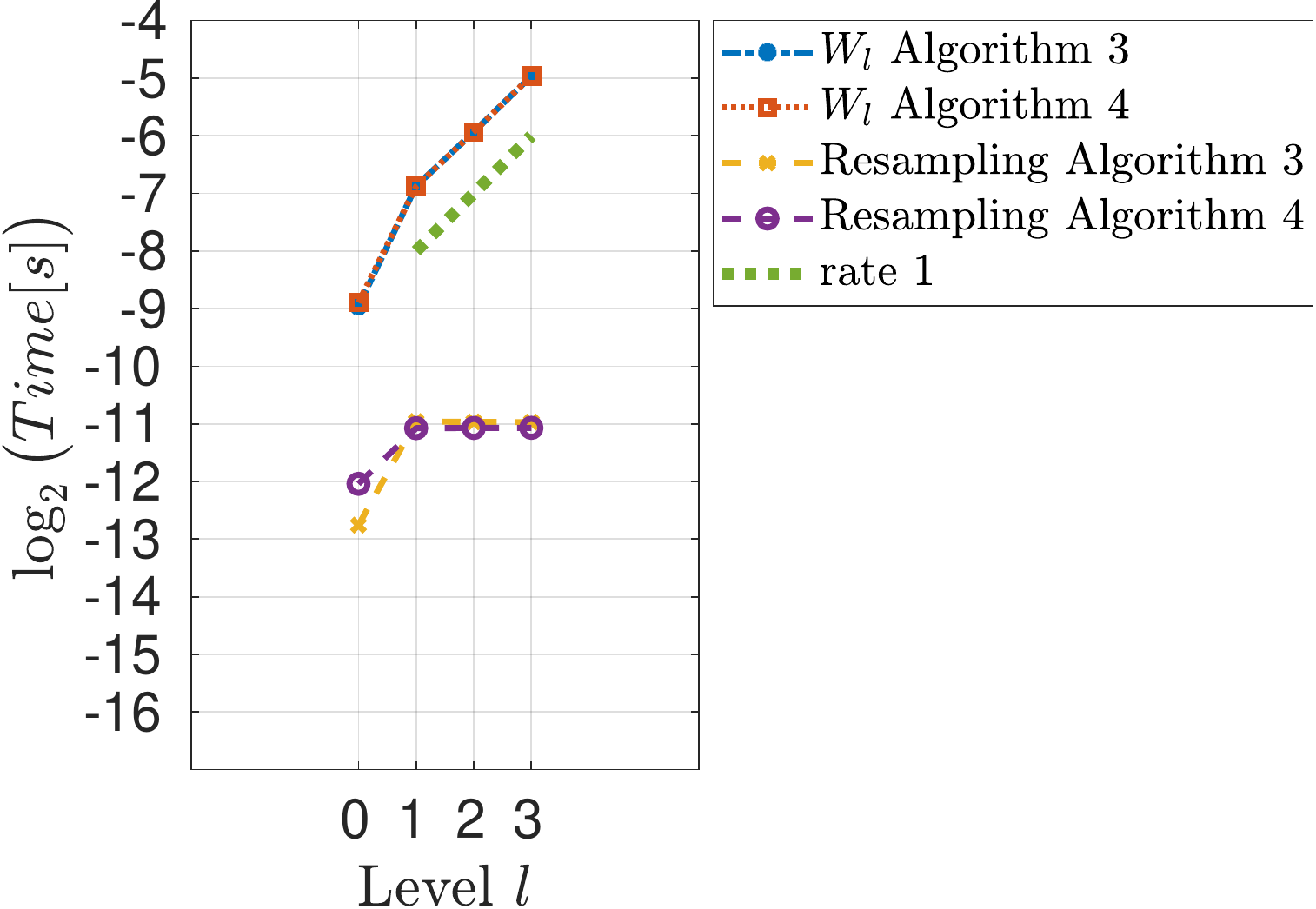}    
  \caption[]{\textbf{DW. Computational time.} 
    Measured wall clock time in seconds for the Euler-Maruyama time stepping as a 
    function of number of coupled particles $N_l$ and level $l$ (left). 
    Total measured wall clock time per particle, as a function of 
    the level $l$, with the measured time of the resampling alone for comparison. 
    Time per particle is based on the measured time for $N=2^{13}$ 
    particles (right). Based on the dynamics~\eqref{eq:DW_dynamics} with the 
    change of measure in Appendix~\ref{app:changeofmeasure}.} 
  \label{fig:dw_work}
\end{figure}

\begin{figure}[H]
\centering
  \includegraphics[width=0.45\textwidth]{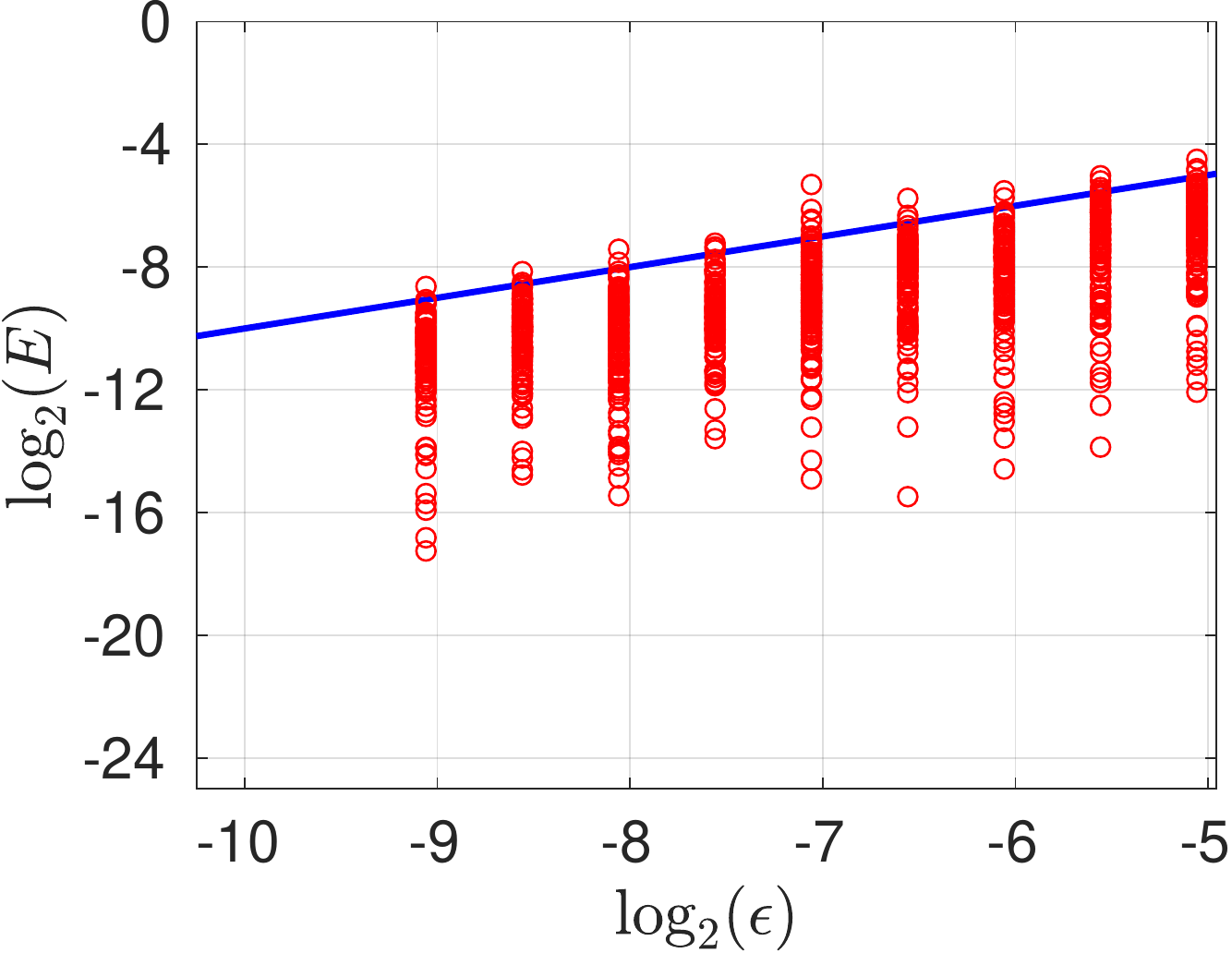} 
  \hfill
  \includegraphics[width=0.45\textwidth]{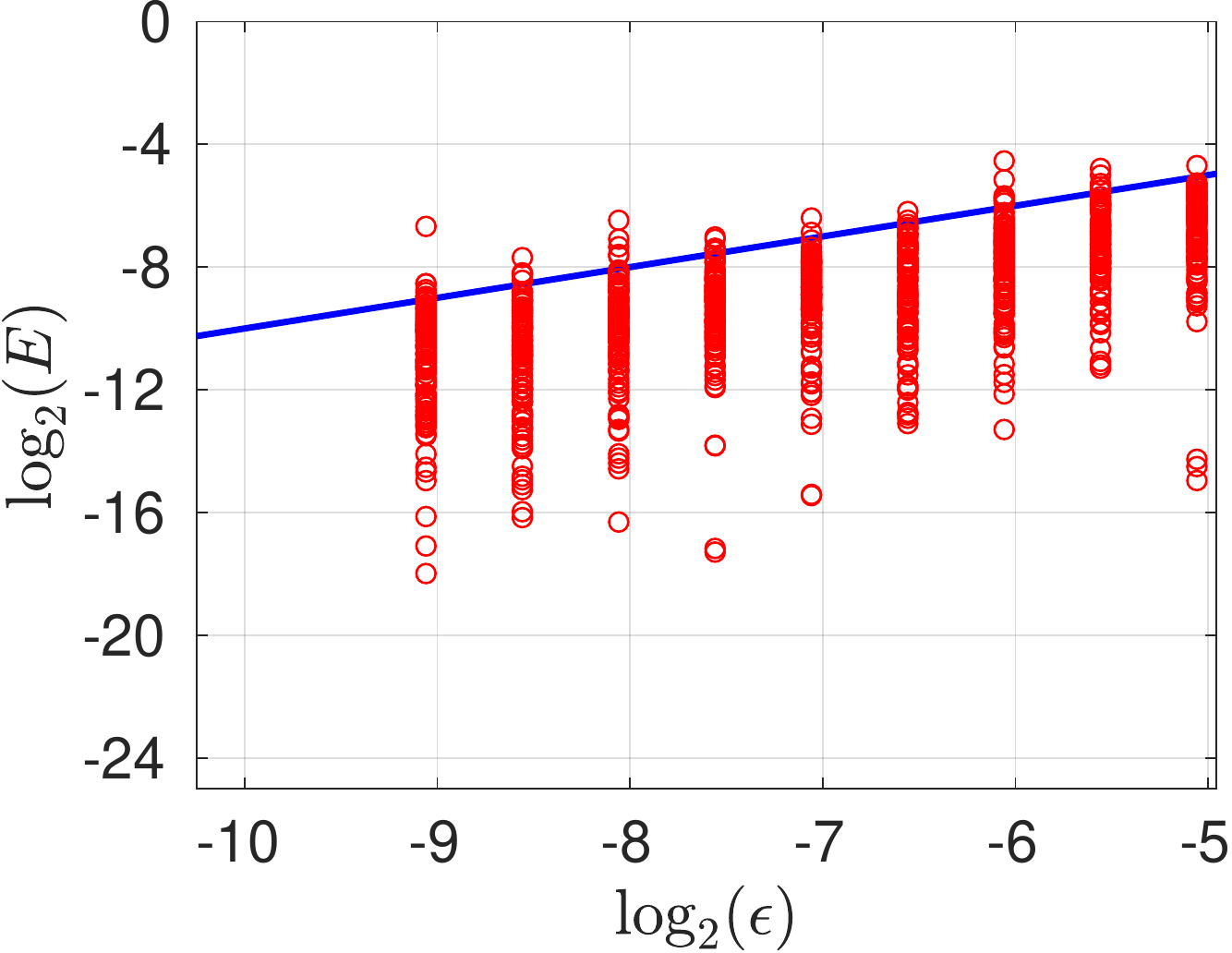} 
  \caption[]{\textbf{OU. Accuracy.} Errors of the expected values of the filter 
    distributions compared to reference solution for tolerances $\epsilon_{k}$, 
    $k=\lbrace 0,\dots,8\rbrace$, in~\eqref{eq:tol}. The error $E$ is estimated 
    by $E_{k,i}$ for $i\in \lbrace 1,\dots,100\rbrace$, defined in~\eqref{eq:E_k_i}, 
    and shown as red circles. 
    On the left, the results using Algorithm~\ref{alg:coupled} are displayed, and 
    on the right, those using Algorithm~\ref{alg:CDFcoupling}.}  
  \label{fig:ou_tolvserror}
\end{figure}

\begin{figure}[H]
\centering  
  \includegraphics[width=0.55\textwidth]{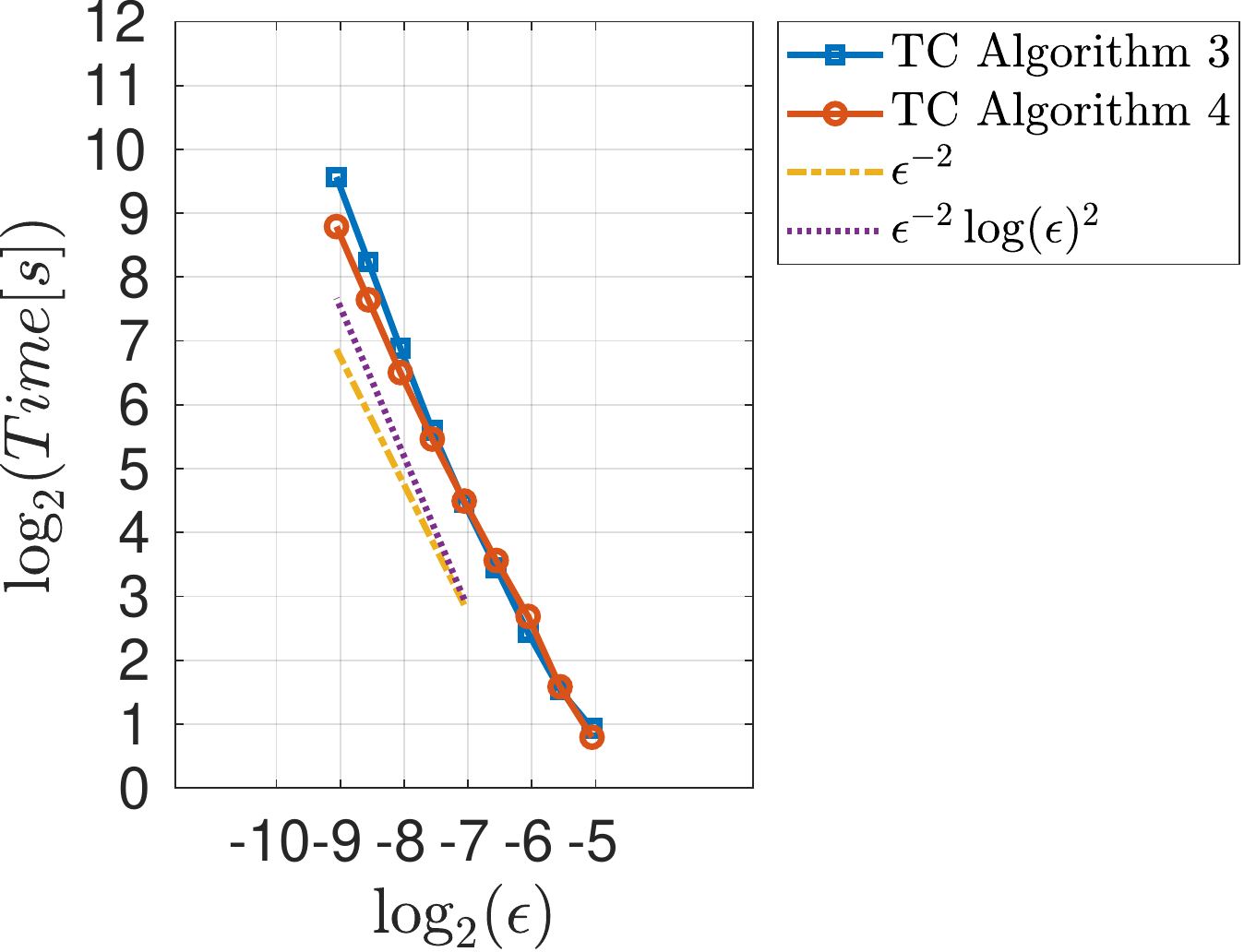}   
\caption[]{\textbf{OU. Complexity.} Total computational time of the optimal 
  filter, measured as wall clock time in seconds, as a function of the tolerance 
  $\epsilon$.} 
  \label{fig:ou_actualwork}
\end{figure}

\begin{figure}[H]
\centering
  \includegraphics[width=0.45\textwidth]{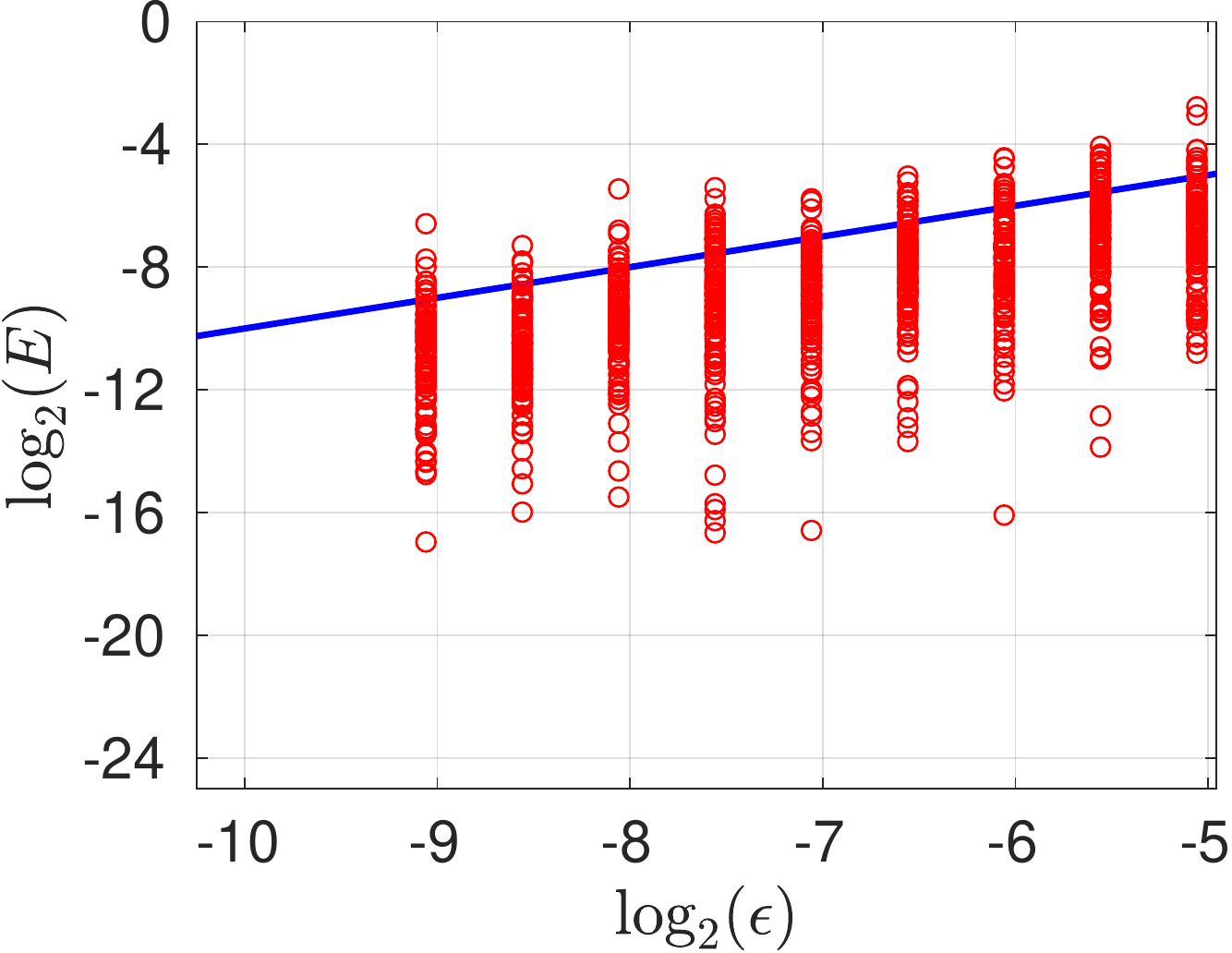} 
\hfill
  \includegraphics[width=0.45\textwidth]{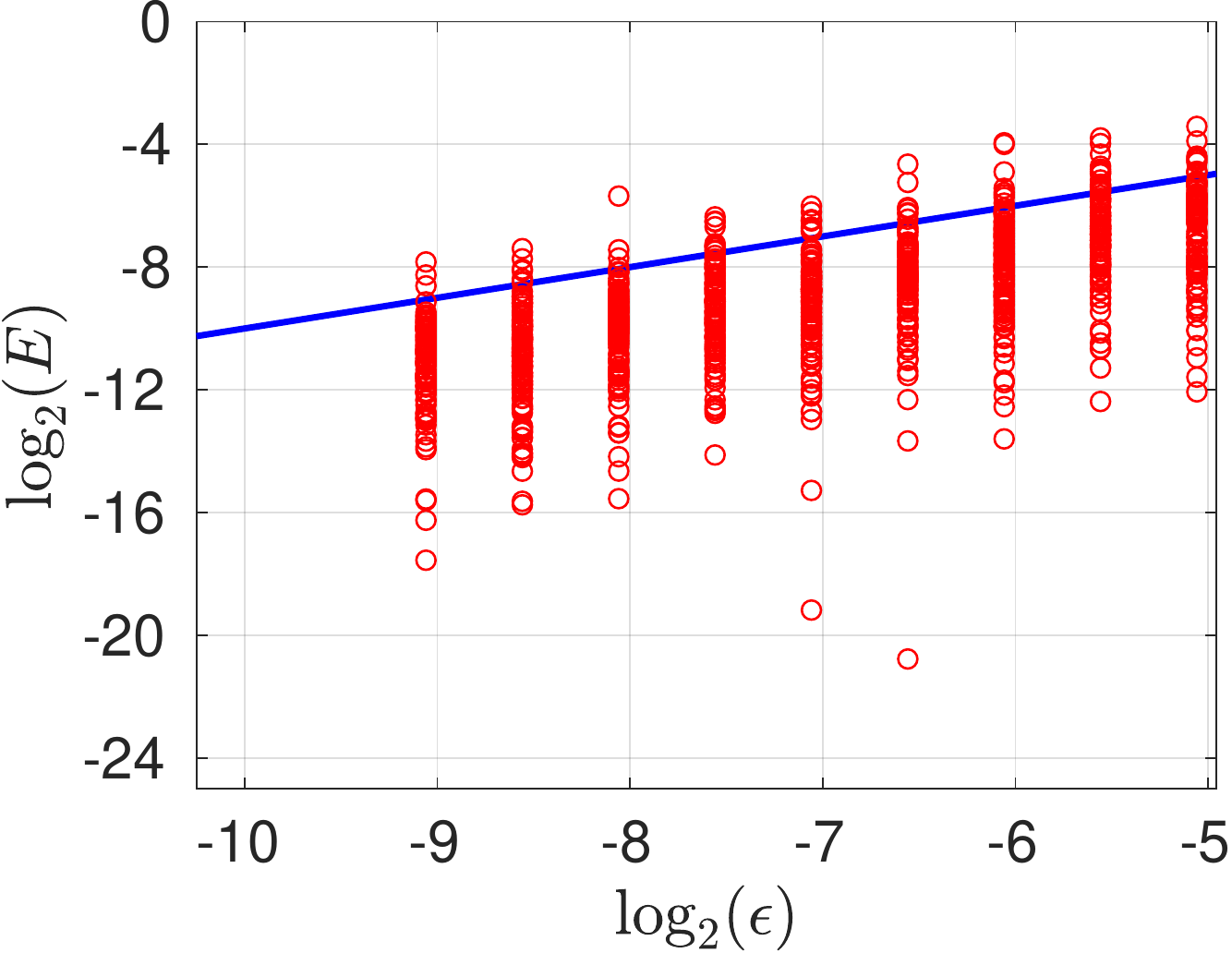} 
\caption[]{\textbf{NDT. Accuracy.} Errors of the expected values of the filter 
    distributions compared to reference solution for tolerances $\epsilon_{k}$, 
    $k=\lbrace 0,\dots,8\rbrace$, in~\eqref{eq:tol}. The error $E$ is estimated 
    by $E_{k,i}$ for $i\in \lbrace 1,\dots,100\rbrace$, defined in~\eqref{eq:E_k_i}, 
    and shown as red circles. 
    On the left the results using Algorithm~\ref{alg:coupled} are displayed, and 
    on the right those using Algorithm~\ref{alg:CDFcoupling}.}   
\label{fig:ndt_tolvserror}
\end{figure}

\begin{figure}[H]
\centering  
  \includegraphics[width=0.55\textwidth]{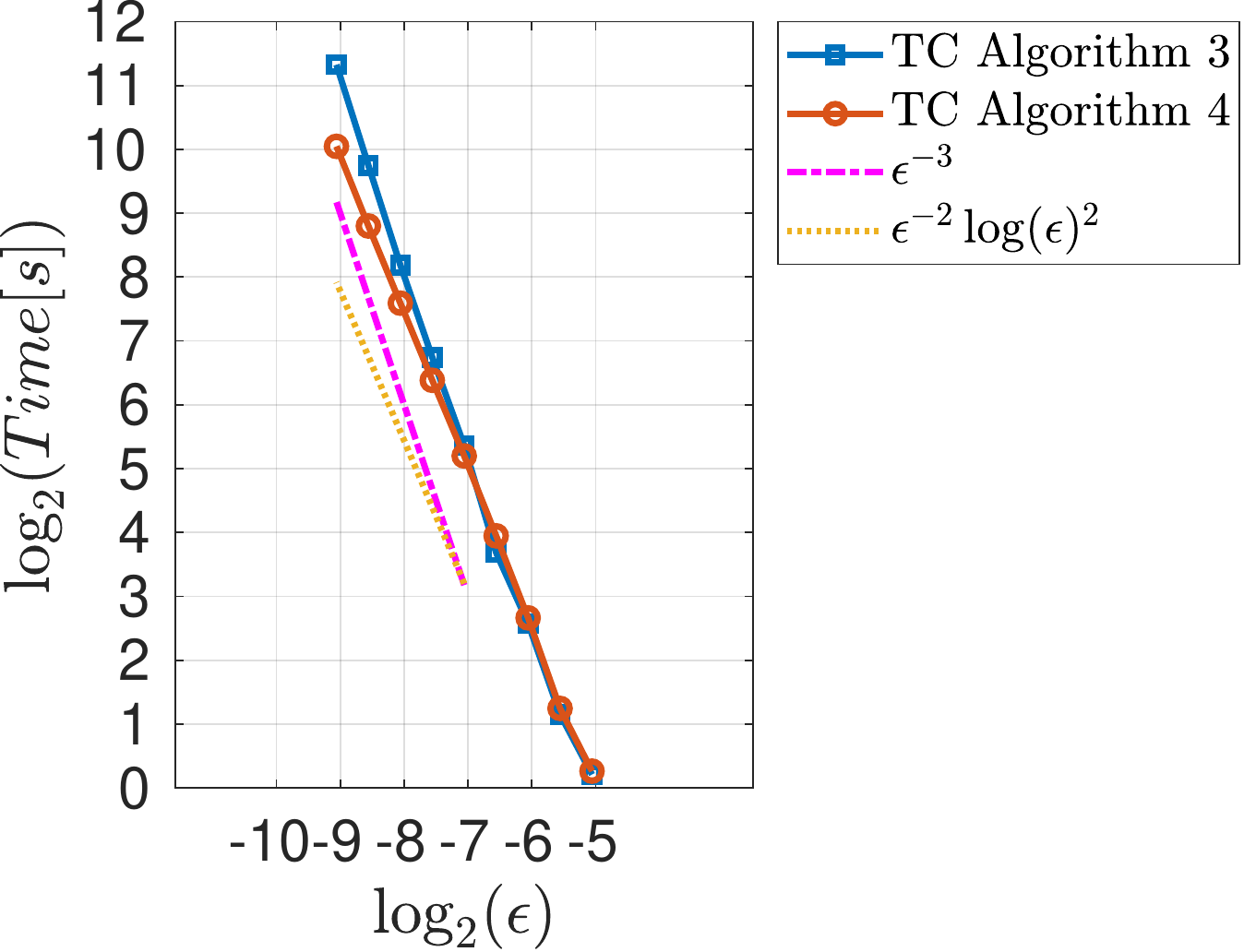}   
\caption[]{\textbf{NDT. Complexity.} Total computational time of the optimal 
  filter, measured as wall clock time in seconds, as a function of the tolerance 
  $\epsilon$.} 
\label{fig:ndt_actualwork}
\end{figure}

\begin{figure}[H]
\centering
  \includegraphics[width=0.45\textwidth]{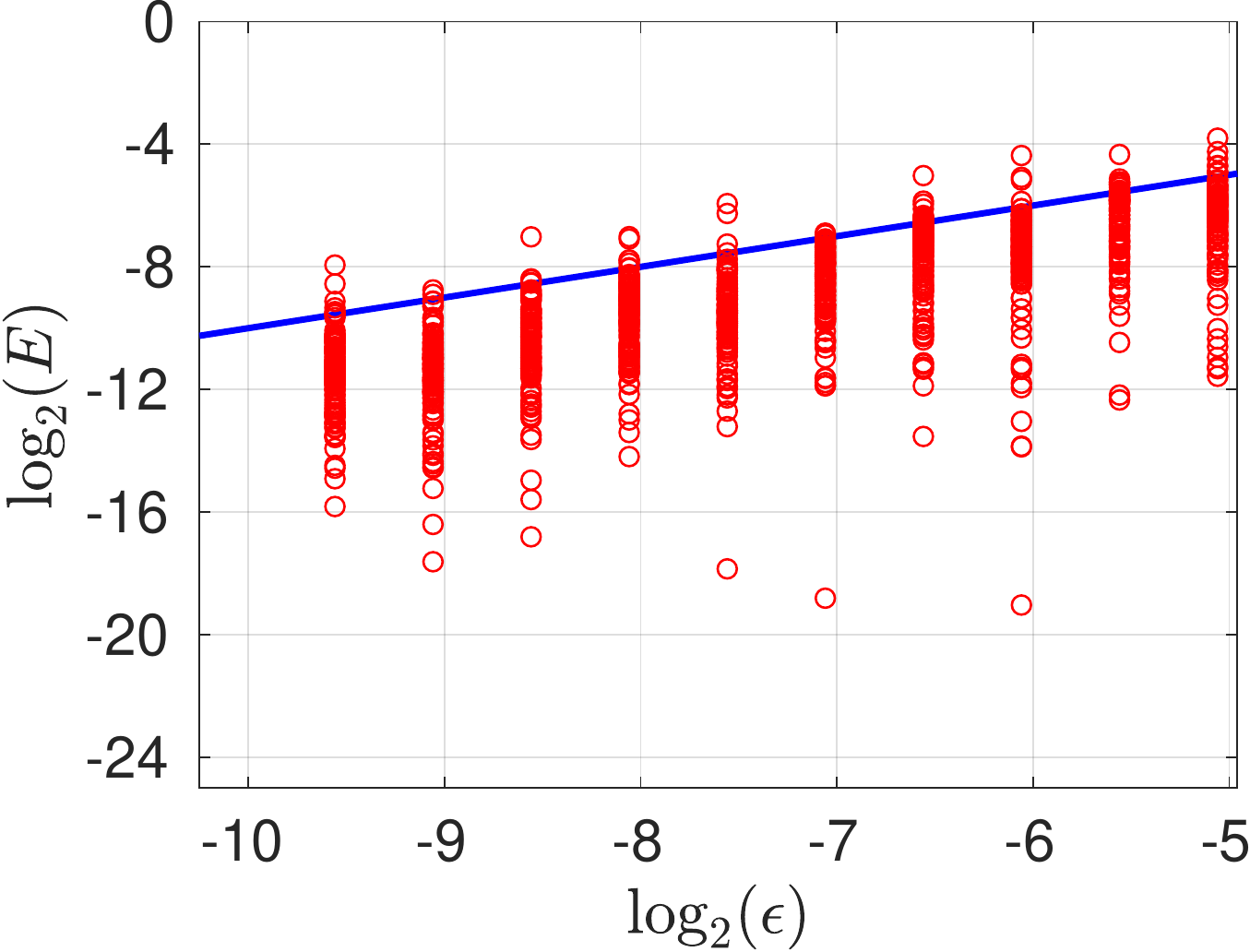} 
\hfill
  \includegraphics[width=0.45\textwidth]{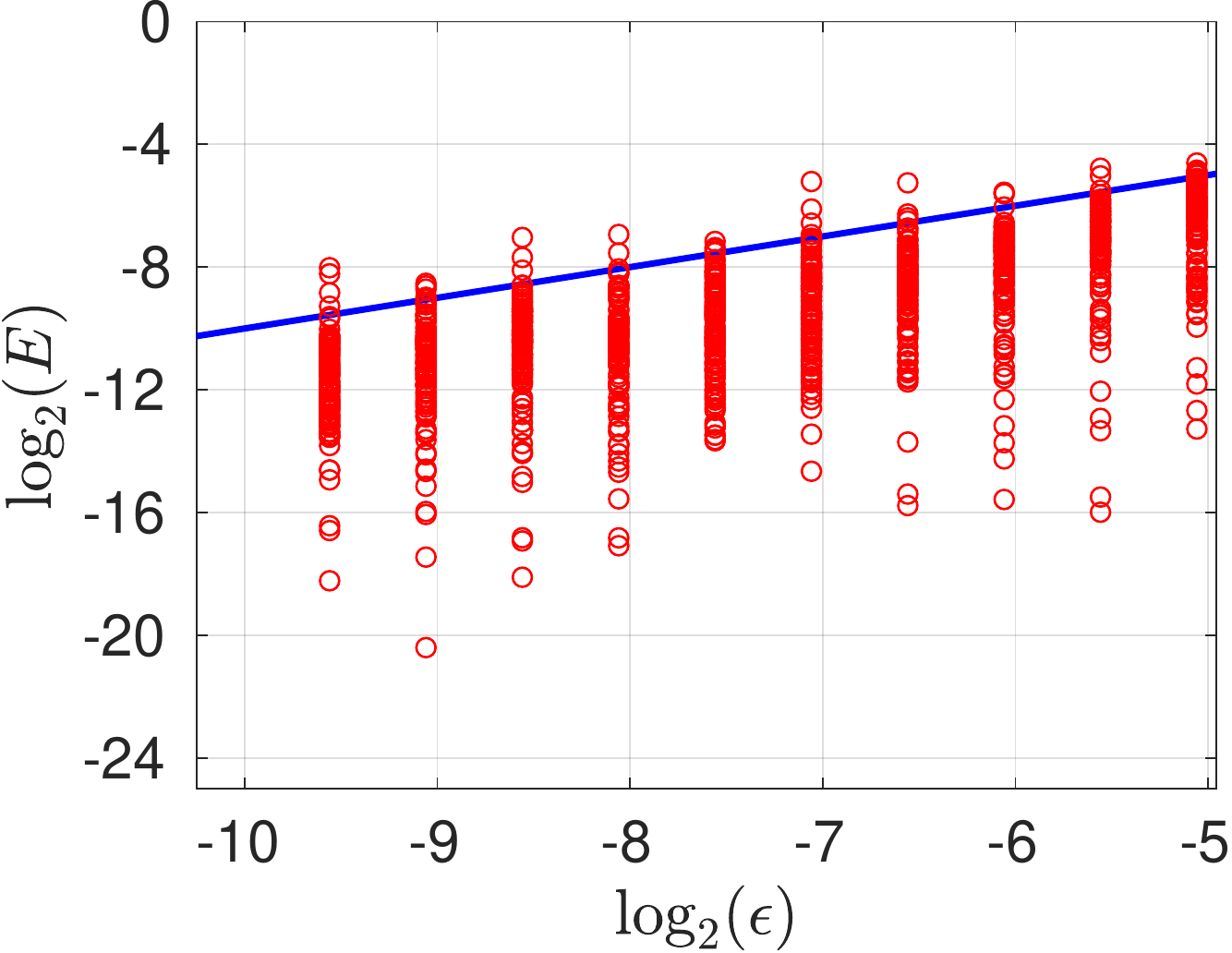} 
\caption[]{\textbf{DW. Accuracy.} Errors of the expected values of the filter 
    distributions compared to reference solution for tolerances $\epsilon_{k}$, 
    $k=\lbrace 0,\dots,9\rbrace$, in~\eqref{eq:tol}. The error $E$ is estimated 
    by $E_{k,i}$ for $i\in \lbrace 1,\dots,100\rbrace$, defined in~\eqref{eq:E_k_i}, 
    and shown as red circles. 
    On the left the results using Algorithm~\ref{alg:coupled} are displayed, and 
    on the right those using Algorithm~\ref{alg:CDFcoupling}. Based on the 
    dynamics~\eqref{eq:DW_dynamics} with the change of measure in Appendix~\ref{app:changeofmeasure}.}  
  \label{fig:dw_tolvserror}
\end{figure}

\begin{figure}[H]
\centering  
  \includegraphics[width=0.55\textwidth]{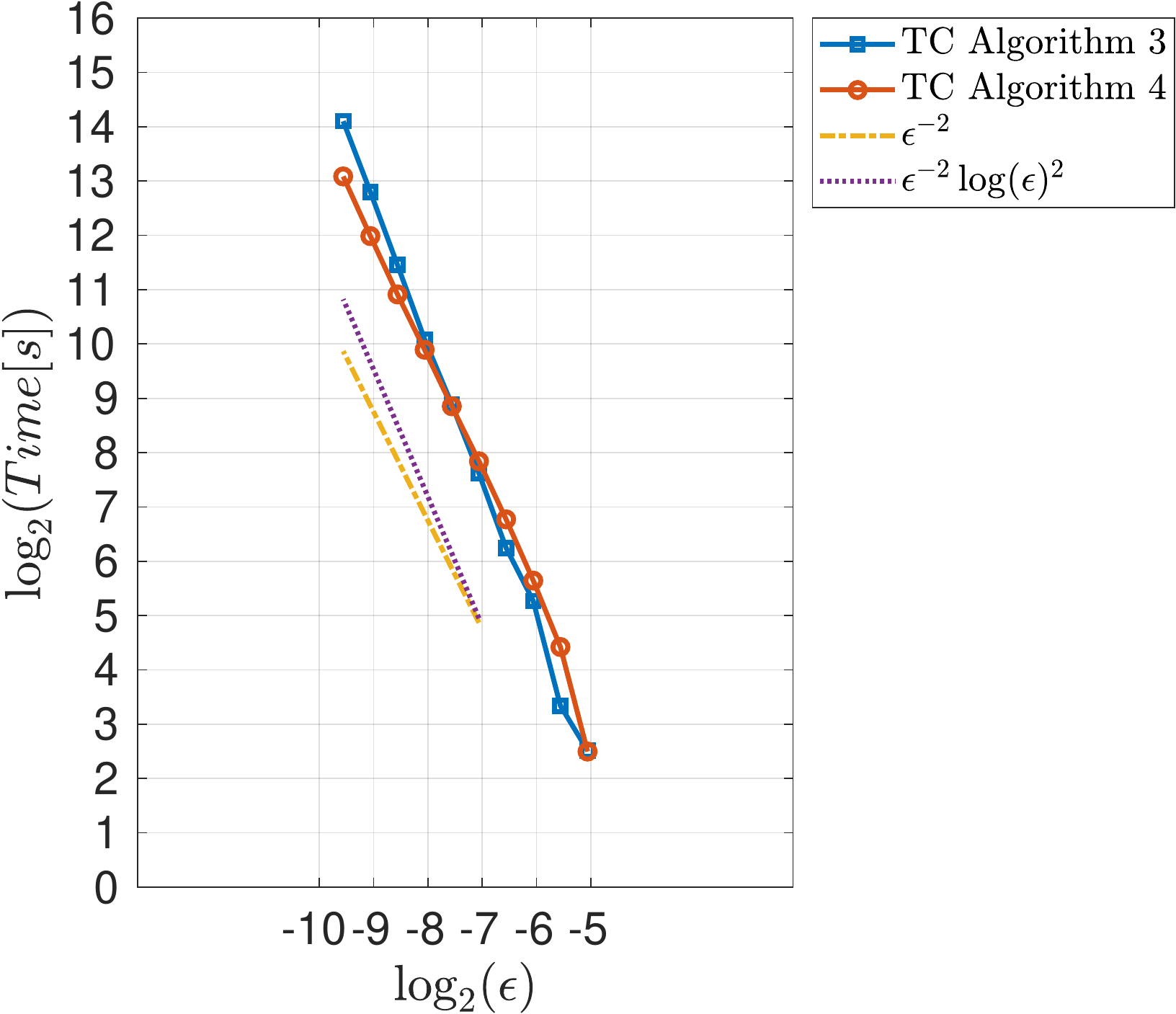}   
\caption[]{\textbf{DW. Complexity.} Total computational time of the optimal 
  filter, measured as wall clock time in seconds, as a function of the tolerance 
  $\epsilon$. Based on the dynamics~\eqref{eq:DW_dynamics} with the change of 
  measure in Appendix~\ref{app:changeofmeasure}.}
  \label{fig:dw_actualwork}
\end{figure}

\newpage

\section{Tables}

\begin{table}[H]

\centering
  \begin{subtable}[t]{\linewidth}
  {\small
    \begin{tabular}{c|c|c|c|c|c|c|c|c|c}
      \textbf{Level} & \textbf{$\epsilon_{1}$} & \textbf{$\epsilon_{2}$} & \textbf{$\epsilon_{3}$} & \textbf{$\epsilon_{4}$} & \textbf{$\epsilon_{5}$} & \textbf{$\epsilon_{6}$} & \textbf{$\epsilon_{7}$} & \textbf{$\epsilon_{8}$} & \textbf{$\epsilon_{9}$}\\ 
      \hline
      0 & 2738 & 6406 & 12641  & 25657 & 57268   & 114513 & 233370 & 467776 & 988411 \\  
      1 & 595 & 1391& 2745     & 5571  &  12436  &  24866 & 50674  & 101573 & 214623  \\ 
      2 & 260 & 608 & 1198     & 2432  &  5427   &  10852 & 22115  & 44328  & 93664\\ 
      3 & - & -  & 546         & 1107  &  2471   &  4940  & 10067  & 20179  & 42637 \\ 
      4 & - & -  & -           & 530   &  1182   &  2363  & 4815   & 9651   & 20391\\ 
      5 & - & -  & -           & -     &  -      &  1143  & 2328   & 4666   & 9858\\ 
      6 & - & -  & -           & -     &  -      &  -     & 1132   & 2269   & 4794 \\ 
      7 & - & -  & -           & -     &  -      &  -     & -      & 1130   & 2386 \\ 
    \end{tabular}}
  \caption{Algorithm \ref{alg:coupled}.}  
  \end{subtable}
  
  \begin{subtable}[t]{\linewidth}
  {\small
    \begin{tabular}{c|c|c|c|c|c|c|c|c|c}
      \textbf{Level} & \textbf{$\epsilon_{1}$} & \textbf{$\epsilon_{2}$} & \textbf{$\epsilon_{3}$} & \textbf{$\epsilon_{4}$} & \textbf{$\epsilon_{5}$} & \textbf{$\epsilon_{6}$} & \textbf{$\epsilon_{7}$} & \textbf{$\epsilon_{8}$} & \textbf{$\epsilon_{9}$}\\ 
      \hline
      0 & 2342 & 4608 & 8785        & 17637 & 35262   & 70539 & 141076 & 282121   & 564166 \\  
      1 & 491  & 966  & 1842        & 3698  & 7393    & 14788 & 29576  &  59145   & 118274   \\ 
      2 & 180  & 353  & 674         & 1352  &  2702   & 5405  & 10810  &  21617   & 43227     \\ 
      3 & -    & 127  & 241         & 484   &  966    & 1932  &  3864 &  7727    & 15451     \\ 
      4 & -    & -    & 87          & 174   &  347    &  694  &  1387  &  2773    & 5544     \\ 
      5 & -    & -    & 31          & 62    &  124    & 248   &   496  &  992     & 1984\\ 
      6 & -    & -    & -           & 23    &  45     &  89   &   177  &  354     & 707 \\ 
      7 & -    & -    & -           & -     &  16     &  32   & 64     &  128     & 256 \\ 
      8 & -    & -    & -           & -     &  -      &  12   & 23     &  46      & 92\\ 
      9 & -    & -    & -           & -     &  -      &  -    &   9    &  17      & 33  \\ 
     10 & -    & -    & -           & -     &  -      &  -    &  -     &   6      & 12\\ 
     11 & -    & -    & -           & -     &  -      &  -    &   -    &   -      & 5  \\ 
    \end{tabular}}
  \caption{Algorithm \ref{alg:CDFcoupling}.}  
  \end{subtable}
  \caption{\textbf{OU. Optimal hierarchies.}}  
\label{tab:ou_hierarchy}
\end{table}

\begin{table}[H]
\centering
  \begin{subtable}[t]{\linewidth}
  \small{
    \begin{tabular}{c|c|c|c|c|c|c|c|c|c}
      \textbf{Level} & \textbf{$\epsilon_{1}$} & \textbf{$\epsilon_{2}$} & \textbf{$\epsilon_{3}$} & \textbf{$\epsilon_{4}$} & \textbf{$\epsilon_{5}$} & \textbf{$\epsilon_{6}$} & \textbf{$\epsilon_{7}$} & \textbf{$\epsilon_{8}$} & \textbf{$\epsilon_{9}$}\\ 
      \hline
      0 & -     & -     &  -     & -      & -      &  -      &  -       &  -      & -       \\  
      1 & -     & -     &  -     & -      & -      &  -      &  -       &  -      & -       \\ 
      2 & 1711  & 4201  &  -     &  -     &   -    &  -      &  -       &   -     & -       \\ 
      3 & -     & -     & 7051   & 17613  & 37738  & 96551   & 198022   & 425308  & 1050464 \\ 
      4 & -     & -     & -      &        & 6038   & 15447   & 31681    &  68043  & 168058  \\ 
      5 & -     & -     & -      & -      &  -     &  -      & 17871    &  38382  & 94800   \\ 
      6 & -     & -     & -      & -      &  -     &  -      &  -       &  22018  & 54382   \\ 
    \end{tabular}}
  \caption{Algorithm \ref{alg:coupled}.}  
  \end{subtable}
  
  \begin{subtable}[t]{\linewidth}
  \small{
    \begin{tabular}{c|c|c|c|c|c|c|c|c|c}
      \textbf{Level} & \textbf{$\epsilon_{1}$} & \textbf{$\epsilon_{2}$} & \textbf{$\epsilon_{3}$} & \textbf{$\epsilon_{4}$} & \textbf{$\epsilon_{5}$} & \textbf{$\epsilon_{6}$} & \textbf{$\epsilon_{7}$} & \textbf{$\epsilon_{8}$} & \textbf{$\epsilon_{9}$}\\ 
      \hline
      0 & -     & -     &  -       & -      & -      &  -       & -       &  -       &   -      \\  
      1 & -     & -     & -        & -      & -      &  -       & -       &  -       &   -      \\ 
      2 & 1711  & 4200  &   8797   &  18149 & 37700  &  84783   & 172764  & 344565  & 699524 \\ 
      3 & -     &       &   1308   &  2698  & 5604   &  12602   & 25678   & 51213   &  103970 \\ 
      4 & -     & -     & -        &  1201  & 2493   &  5607    & 11424   &  22784  & 46255   \\ 
      5 & -     & -     & -        &  -     & 1178   &  2648    & 5396    &  10761  & 21845   \\ 
      6 & -     & -     & -        & -      &  -     & -        & 2596    & 5178    & 10511   \\ 
      7 & -     & -     & -        & -      &  -     &  -       &  -      &  2556   &  5189   \\ 
      8 & -     & -     & -        & -      &  -     &   -      &   -     &  -       &  2690   \\ 
    \end{tabular}}
  \caption{Algorithm \ref{alg:CDFcoupling}.}  
  \end{subtable}
  \caption{\textbf{NDT. Optimal hierarchies.}}  
\label{tab:ndt_hierarchy}  
\end{table}

\begin{table}[H]
\centering
  \begin{subtable}[t]{\linewidth}
  \footnotesize{
    \begin{tabular}{c|c|c|c|c|c|c|c|c|c|c}
      \textbf{Level} & \textbf{$\epsilon_{1}$} & \textbf{$\epsilon_{2}$} & \textbf{$\epsilon_{3}$} & \textbf{$\epsilon_{4}$} & \textbf{$\epsilon_{5}$} & \textbf{$\epsilon_{6}$} & \textbf{$\epsilon_{7}$} & \textbf{$\epsilon_{8}$} & \textbf{$\epsilon_{9}$} & $\epsilon_{10}$ \\ 
      \hline
      0 & 1510  & 4125  &   8747 & 23185  & 45223 &  91421  &  215880  &  421851 & 854945  &  1890536  \\  
      1 & -     & -     &  107   & 2835   & 5529  &  11177  &  26393   &  51574  & 104522  &  231128   \\ 
      2 & -     & -     &  -     &  -     &  2496 &  5046   &  11914   &   23281 & 47183   &  104335   \\ 
      3 & -     & -     &  -     & -      &   -   &  2590   &  6116    &  11950  &  24218  & 53553     \\ 
      4 & -     & -     & -      &  -     &  -    &   -     &  -       &   5631  & 11411   & 25232     \\ 
      5 & -     & -     & -      & -      &  -    &  -      &     -    &  -      & 5493    &  12146    \\ 
    \end{tabular}}
  \caption{Algorithm \ref{alg:coupled}.}  
  \end{subtable}
  
  \begin{subtable}[t]{\linewidth}
  \footnotesize{
    \begin{tabular}{c|c|c|c|c|c|c|c|c|c|c}
      \textbf{Level} & \textbf{$\epsilon_{1}$} & \textbf{$\epsilon_{2}$} & \textbf{$\epsilon_{3}$} & \textbf{$\epsilon_{4}$} & \textbf{$\epsilon_{5}$} & \textbf{$\epsilon_{6}$} & \textbf{$\epsilon_{7}$} & \textbf{$\epsilon_{8}$} & \textbf{$\epsilon_{9}$} & $\epsilon_{10}$ \\ 
      \hline
      0 & 1457 & 2853 & 5650     &  11125 & 22013 &  43684  &  86886  &   173102 &   345275 &  689266\\  
      1 & -     &  158 &  313   &  617  &  1219 &  2419  &  4812  &  9586  &   19120   &  38169 \\ 
      2 & -      & -      &    101  &  198   &  391 &   775   &  1541  &   3070   &   6124      &12224\\ 
      3 & -     &  -      & -        &  68    &   134 &    266   &  528   &  1051    &  2095     &4182\\ 
      4 & -     & -     & -        &   -     &    44  &   87    &  173   &  344   &   686  & 1369\\ 
      5 & -     & -     & -        &  -     &  -      &   29     &   58    & 114     &  227    &454 \\ 
      6 & -     & -     & -        & -      &  -     & -        &   19   &  38      &  76        &150 \\ 
      7 & -     & -     & -        & -      &  -     &  -       &  -      &  13      &  25   & 50\\ 
      8 & -     & -     & -        & -      &  -     &   -      &   -     &  -       &   9      & 17 \\ 
      9 & -     & -     & -        & -      &  -     &   -      &   -     &  -       &   -    &  6\\ 
    \end{tabular}}
  \caption{Algorithm \ref{alg:CDFcoupling}.}  
  \end{subtable}
  \caption{\textbf{DW. Optimal hierarchies.} Based on the dynamics~\eqref{eq:DW_dynamics} 
  with the change of measure. Figure~\eqref{fig:dw_work} shows that the cost is 
  not proportional to the number of particles, $N$, in a practially significant 
  range of values of $N$, in contrast to the modeling assumption. 
	To avoid complications resulting from this deviation between 
  modeled and observed work, we have for Algorithm~\ref{alg:CDFcoupling} 
  constrained the optimization in the construction of the MLPFs to satisfy 
  $L(\epsilon_{k+1})=L(\epsilon_{k})+1$, which is the theoretically expected 
  rate given the convergence rate of $V_l$. 
}
\label{tab:dw_hierarchy}  
\end{table}

\end{document}